\newtheorem{thm}{Theorem}[section]
\newtheorem{propn} [thm]{Proposition}
\newtheorem{lemma} [thm]{Lemma}
\newtheorem{cor}[thm]{Corollary}
\newtheorem{imex}[thm]{Important example}
\newtheorem*{thm*}{Theorem}
\newtheorem{quest}{Question}
\theoremstyle{definition}
\newtheorem{defin}[thm]{Definition}
\theoremstyle{remark}
\newtheorem{rem}[thm]{Remark}
\newtheorem{exem}[thm]{Example}
\def\Z{\mathbb{Z}}
\def\N{\mathbb{N}}
\def\K{\mathbb{K}}
\def\C{\mathcal C}
\def\O{\mathcal O}
\def\T{\mathcal T}
\def\P{\mathcal{P}}
\def\G{\mathcal{G}}
\def\A{\mathcal{A}}
\def\Mfl{\mathcal{M}_{fl}}
\def\X{\mathscr{X}}
\def\L{\mathscr{L}}
\def\p{\mathfrak p}
\def\q{\mathfrak q}
\newcommand{\iso}{\xrightarrow{\sim}}
\newcommand*\xbar[1]{%
  \hbox{%
    \vbox{%
      \hrule height 0.1pt 
      \kern0.5ex
      \hbox{%
        \kern-0.1em
        \ensuremath{#1}%
        \kern-0.1em
      }%
    }%
  }%
} 
\def\bA{\xbar{\A}}
\def\sbA{\mbox{\scriptsize $\bA$}}
\def\FP{\mbox{$\xbar{\P(R)}$}}
\def\sFP{\mbox{\scriptsize $\FP$}}
\DeclareMathOperator{\wid}{Width}
\def\sbe{\subset}
\def\ext{\text{Ext}}
\def\tor{\text{Tor}}
\def\hm{\text{Hom}}
\def\ch{\text{Ch}}
\def\mspec{\text{MaxSpec}}
\def\spec{\text{Spec}}
\def\supp{\text{Supp}}
\def\supph{\text{Supph}}
\def\ker{\text{ker}}
\def\id{\text{Id}}
\def\kos{\text{Kos}}
\def\End{\text{End}}
\def\da{\dagger}
\DeclareMathOperator{\dm}{\text{dim}}
\DeclareMathOperator{\codm}{\text{codim}}
\DeclareMathOperator{\M}{\text{mod}}
\begin{document}

\title{Finite homological dimension and a derived equivalence}
\author{William Sanders and Sarang Sane}
\date{}

\address{William Sanders\\
Department of Mathematics \\
University of Kansas \\
405 Snow Hall \\
1460 Jayhawk Blvd \\ 
Lawrence, Kansas 66045-7594}
\email{wsanders@ku.edu}
\address{Sarang Sane*\\
Department of Mathematics \\
Indian Institute of Technology Madras\\
Sardar Patel Road \\
Chennai India 600036}
\email{sarangsanemath@gmail.com}

\keywords{Derived categories, finite projective dimension, finite length,
resolving subcategories, K-theory, derived Witt groups, Gersten complexes}
\subjclass[2010]{Primary: 13D09; Secondary:18E30, 18G35, 19D99, 19G38}

\begin{abstract}
For a Cohen-Macaulay ring $R$, we exhibit the equivalence of the bounded derived
categories of certain resolving subcategories, which, amongst other results, yields
an equivalence of the bounded derived category of finite length and finite
projective dimension modules with the bounded derived category of projective modules
with finite length homologies. This yields isomorphisms of K-theory and Witt groups (amongst other invariants)
and improves on terms of associated spectral sequences and Gersten complexes.
\end{abstract}

\maketitle


\section{Introduction} 
 \label{sec1}
Let $R$ be a commutative Noetherian ring. Let $\P(R)$ be the category of
finitely generated projective modules, $\FP$ be the category of finite projective
dimension and $\Mfl$ be the category of finite length modules. Let $D^b_{fl}(\P(R))$
be the full subcategory of $D^b(\P(R))$ consisting of bounded complexes with finite
length homologies. 

The category $\Mfl \cap \FP$ has received much attention in recent years.  Bass conjectured that $\Mfl \cap \FP$ contains nonzero modules if and only if the ring is Cohen-Macaulay.  This conjecture was resolved in the affirmative by the famed new intersection theorem of  \cite{R}.  Furthermore, in order to discuss intersection multiplicity over nonregular rings,  Roberts and Srinivas  show in \cite[Proposition 2]{RS} that the Grothendieck groups of $\Mfl \cap \FP$ and $D^b_{fl}(\P(R))$ coincide when $R$ is a Cohen-Macaulay ring.

We prove the following theorem which appears as Theorem \ref{thm-cm-iff}.  
\begin{thm}\label{1.1}
If $R$ is a local ring, then $R$ is Cohen-Macaulay if and only if there is an equivalence of derived categories
$$ D^b(\FP \cap \Mfl) \simeq D^b_{fl}(\P(R)).$$
\end{thm}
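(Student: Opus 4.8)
The plan is to construct an explicit functor realizing the equivalence when $R$ is Cohen--Macaulay, and to deduce the converse from the New Intersection Theorem. Write $\mathcal B=\FP\cap\Mfl$, viewed as an exact category with the exact structure induced from the category of finitely generated $R$-modules. Define $F\colon D^b(\mathcal B)\to D^b_{fl}(\P(R))$ by sending a bounded complex $C_\bullet$ of modules in $\mathcal B$ to its total projective resolution: this is a bounded complex of finitely generated projectives --- bounded because each term of $C_\bullet$ has finite projective dimension --- it is quasi-isomorphic to $C_\bullet$, and its homology, being assembled from subquotients of the finite length modules $H_i(C_\bullet)$, has finite length; so $F$ is well defined and factors through the inclusion $D^b(\P(R))\hookrightarrow D^b(R)$ of perfect complexes, with $F(M)\simeq M$ in $D^b(R)$ for any module $M\in\mathcal B$.

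For the converse, assume $D^b(\mathcal B)\simeq D^b_{fl}(\P(R))$. The right-hand category is nonzero, because the Koszul complex on a finite generating set of $\m$ is a nonzero perfect complex whose homology is killed by $\m$ and therefore of finite length. Hence $D^b(\mathcal B)\ne 0$, so the exact category $\mathcal B$ has a nonzero object; that is, there is a nonzero finite length module of finite projective dimension, and the New Intersection Theorem \cite{R} then forces $R$ to be Cohen--Macaulay (this is the implication behind Bass's conjecture recalled above).

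Now let $R$ be Cohen--Macaulay; we must show $F$ is an equivalence. For essential surjectivity we invoke the classification of thick subcategories of the perfect derived category $D^b(\P(R))$: a perfect complex has finite length homology exactly when its support is contained in $\{\m\}$, so $D^b_{fl}(\P(R))$ is the thick subcategory generated by the Koszul complex $K(\mathbf x)$ on a system of parameters $\mathbf x$ (a sequence generating an $\m$-primary ideal). Since $R$ is Cohen--Macaulay, $\mathbf x$ is a regular sequence, so $K(\mathbf x)$ is a projective resolution of the module $R/(\mathbf x)$, which lies in $\mathcal B$; thus $K(\mathbf x)\simeq F(R/(\mathbf x))$ is in the essential image of $F$. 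Granting that $F$ is fully faithful and that $D^b(\mathcal B)$ is idempotent complete (which holds by Balmer--Schlichting, since $\mathcal B$ is closed under direct summands inside finitely generated modules), the essential image of $F$ is a thick subcategory of $D^b_{fl}(\P(R))$; as it contains $K(\mathbf x)$, it is all of $D^b_{fl}(\P(R))$.

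The main obstacle is full faithfulness of $F$. By a standard dévissage using the brutal truncation triangles it suffices to treat modules, where it becomes the assertion that for $M,N\in\mathcal B$ the natural map $\ext^i_{\mathcal B}(M,N)\to\ext^i_R(M,N)$, from the Yoneda Ext group of the exact category $\mathcal B$ to the usual $\ext$ over $R$, is an isomorphism for all $i\ge 0$ (the cases $i\le 0$ being formal, via the identification of morphisms in $D^b(\mathcal B)$ with Yoneda Ext). This is precisely where the Cohen--Macaulay hypothesis is needed and where the resolving-subcategory machinery of the paper operates: by Auslander--Buchsbaum every nonzero object of $\mathcal B$ has projective dimension $\dim R$, and every $\m$-primary ideal contains a regular sequence $\mathbf y$, so that a module in $\mathcal B$ annihilated by $\mathbf y$ is a module over the Artinian ring $R/(\mathbf y)$ whose syzygies over that ring still have finite projective dimension over $R$ and hence lie in $\mathcal B$. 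Letting $\mathbf y$ run through ever-deeper regular sequences yields enough $\mathcal B$-internal resolutions, and enough contracting homotopies, to represent every class of $\ext^i_R(M,N)$ by an admissible $i$-extension inside $\mathcal B$ and to recognise when such an extension splits there; carrying this out --- equivalently, checking that $\mathcal B\hookrightarrow\FP$ meets the resolving-subcategory hypotheses of the paper's general equivalence theorem --- is where essentially all the work lies.
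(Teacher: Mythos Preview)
Your converse direction is clean and actually simpler than the paper's: you exhibit a nonzero object of $D^b_{fl}(\P(R))$ directly (the Koszul complex on generators of $\m$), whereas the paper invokes Hopkins--Neeman to the same end.

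For the forward direction, the proof is incomplete. You correctly identify full faithfulness of $F$ as the crux and reduce it to the assertion that $\ext^i_{\mathcal B}(M,N)\to\ext^i_R(M,N)$ is an isomorphism for $M,N\in\mathcal B$, but you do not prove this. The sketch you offer --- syzygies over $R/(\mathbf y)$ for a regular sequence $\mathbf y$ annihilating $M$ --- does produce objects of $\mathcal B$, but free $R/(\mathbf y)$-modules are not projective \emph{in the exact category $\mathcal B$} (there is no reason $\ext^1_{\mathcal B}(R/(\mathbf y),-)$ should vanish), so such a resolution does not a priori compute Yoneda Ext in $\mathcal B$; nor does it compute $\ext^i_R$, since it is not a projective $R$-resolution. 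The final hand-off to ``the resolving-subcategory hypotheses of the paper's general equivalence theorem'' is also misplaced: $\mathcal B=\FP\cap\Mfl$ does not contain $R$ and is not resolving. The paper applies its general theorem with $\A=\P(R)$ resolving and $\L=\Mfl$ Serre satisfying condition~(*), and the content of that theorem is precisely the equivalence you are trying to establish, not a checkable hypothesis.

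What the paper actually does for the hard step is structurally different: it proves fullness and essential surjectivity of $\iota\colon D^b(\T\cap\L)\to D^b_\L(\T)$ simultaneously by induction on the homological width $\wid(P_\bullet)$, using the Koszul construction (Theorems~\ref{kos-lemma2} and~\ref{kos-thm}) to build a chain map from a shifted Koszul complex that surjects onto the lowest homology, then passing to the cone to reduce width. Your Hopkins--Neeman argument for essential surjectivity is elegant but presupposes full faithfulness (to conclude the essential image is thick), so it does not sidestep the difficulty. In short, the heart of the theorem lies exactly where you concede ``essentially all the work lies,'' and the proposal does not supply it.
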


When $R$ is Cohen-Macaulay, the equivalence is a special case of our main result, Theorem \ref{main}.  The main difficulty in proving the equivalence lies in showing that the natural functor $ D^b(\FP \cap \Mfl)\to D^b_{fl}(\FP)$ is essentially surjective and full, where $ D^b_{fl}(\FP)$ is the thick subcategory of $D^b(\FP)$ consisting of complexes with finite
length homologies. We overcome this difficulty using a subtle induction on the number of homology modules, an argument which dates back to the preprint \cite{F} (refer to Theorem \ref{kos-lemma2} and Theorem \ref{kos-thm} for our versions).

When $R$ is not Cohen-Macaulay, the equivalence fails because the new intersection theorem \cite{R} asserts that such a ring $R$ never admits a finite length, finite projective dimension module.
Thus $D^b(\FP \cap \Mfl) = 0$. However, the Hopkins-Neeman theorem  \cite{H,N2} states that the thick subcategories of $D^b(\P(R))$ are in bijective correspondence with specialization closed subsets of $\spec(R)$, hence $D^b_{fl}(\FP)$ cannot be $0$. See the proof of Theorem \ref{thm-cm-iff} for more details.

Using Theorem \ref{1.1}, when $R$ is Cohen-Macaulay, we can conclude that the non-connective
$\K$-theory spectra of $\Mfl$ and $D^b_{fl}(\P(R))$  are homotopy equivalent. Special cases of this result for the connective
$K$-theory spectrum can be obtained from \cite[Ex. 5.7]{TT}, and similar results comparing the $K_0$
groups are in \cite[Proposition 2]{RS} (as previously mentioned) and \cite{FH}.
The homotopy equivalence of the $\K$-theory spectra is obtained 
by putting together two equivalences in Theorem
\ref{thick-main} and Lemma \ref{resolv-main} , both of which are induced through natural functors
of the chain complex categories (a zigzag of equivalences induced from chain complex functors).

The equivalence of $K$ groups allows us to refine the terms in weak Gersten complexes for certain non-regular schemes. Let $X$ be a Noetherian scheme, $c$ a non-negative integer, $Coh(X)$ the
category of coherent $\O_X$-modules, $Coh(X)^c$ the subcategory of modules with
codimension at least $c$, and $D^c(Coh(X))$ the subcategory of $D^b(Coh(X))$
consisting of complexes with homologies in $Coh(X)^c$.  Using the natural coniveau
filtration by codimension, one obtains the classical Brown-Gersten-Quillen spectral
sequences of $\K$-groups which abut to the $\K$-theory of $Coh(X)$. Applying Quillen's
localization and d\'{e}vissage theorems, the terms occurring in these sequences can be
identified with $\K$-groups of the residue fields of the points. Classically, these
spectral sequences were applied in the case when $X$ was Noetherian, regular and
separated, in which case they converged to the $\K$-groups of $X$ \cite{Q} . This is
because for such $X$ there is a well-known equivalence 
$D^c(VB_X) \stackrel{\xi}{\rightarrow} D^c(Coh(X))$
(since there is an ample family of line bundles) which yields
$D^b(Coh(X)^c) \simeq D^c(VB_X)$, where $VB_X$ is the category of locally free sheaves
over $X$. The philosophy thus is that if one can understand the $\K$-groups for fields
and transfer maps between them, one can compute the global $\K$-groups.

However, without the regularity assumption, $\xi$ is not an equivalence and hence the
classical spectral sequences can be used to compute only the coherent $\K$-groups (better
known as $G$-groups) but not the usual $\K$-groups. This entire discussion applies for
several other generalized cohomology theories, for example triangular Witt groups \cite{BW}
and Grothendieck-Witt groups \cite{WC,S3} . When $X$ is Gorenstein, the corresponding
result for coherent Witt groups is in \cite{Gi} .

In \cite{B3}, niveau and coniveau spectral sequences are established for the usual 
$\K$-theory over a (topologically) Noetherian scheme with a bounded dimension function.
Similarly weak Gersten complexes are defined. However, the terms occuring in these
spectral sequences involve abstract derived categories with support 
$\K_i(D^b(\O_{X,x} \text{ on } \{ x \} ))$. Now, when $X$ is Cohen-Macaulay at every stalk, using Theorem \ref{intro-main}, these terms can
be identified with the $\K$-groups of an actual category of modules, that is, we can
rewrite these as $\K_i(\xbar{\P(\O_{X,x})} \cap \Mfl(\O_{X,x}))$, thus obtaining refined spectral sequences and Gersten complexes in Theorem \ref{spectral-kth-thm} . Thus,
the philosophy can now be changed to understanding the $\K$-groups of $\FP \cap \Mfl(R)$
when $R$ is a Cohen-Macaulay local ring, and maps between them.

Furthermore,the equivalence in Theorem \ref{1.1} preserves each categories' natural dualities, which induces isomorphisms of triangular Witt and Grothendieck-Witt groups, see 
Theorem \ref{witt} and Remark \ref{gw} .  Similarly, in Theorem \ref{witt-comp} we compare a recent definition \cite{MS} of triangular Witt groups for certain subcategories of triangulated categories with duality (for example $W^i(D^b_{\sFP}(\P(R))$))  . Indeed, this
article grew out of discussions between the authors on such comparisons after a series
of talks by Satya Mandal at KU.

Theorem \ref{1.1} is actually the corollary of a much more general statement involving homological dimension.  Homological dimension of a module $M$ with respect to a category $\A$, or the $\A$-dimension of $M$, is the smallest resolution of $M$ by modules in $\A$.  A resolving subcategory is in some sense the "correct" type of category over which to take homological dimension.   The following is the main theorem of this article which features as Theorem \ref{main} :
\begin{thm}\label{intro-main}
Let $\A$ be a resolving subcategory of $\M(R)$, $\bA$ the category of modules
of finite $\A$-dimension, and $\L$ a Serre subcategory satisfying condition (*), (see Definition \ref{condition*}). 
Then there is an equivalence
of derived categories
$$D^b(\bA \cap \L) \simeq D^b_{\L}(\A)$$
where $D^b_{\L}(\A)$ is the full subcategory of $D^b(\A)$ of complexes whose homologies lie in $\L$.
\end{thm}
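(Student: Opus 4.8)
The plan is to factor the claimed equivalence through the intermediate triangulated category $D^b_\L(\bA)$, the full subcategory of $D^b(\bA)$ consisting of complexes with homology in $\L$. Since $\A$ is resolving inside $\bA$ and, by definition, every object of $\bA$ has a finite $\A$-resolution, the inclusion $\A\hookrightarrow\bA$ induces an equivalence $D^b(\A)\xrightarrow{\ \sim\ }D^b(\bA)$ (this is Lemma~\ref{resolv-main}); it is compatible with the forgetful functors into $D^b(\M(R))$, hence preserves homology modules, and therefore restricts to an equivalence $D^b_\L(\A)\xrightarrow{\ \sim\ }D^b_\L(\bA)$. So it suffices to show that the natural functor
\[
\Psi\colon D^b(\bA\cap\L)\longrightarrow D^b_\L(\bA),
\]
induced by the exact inclusion $\bA\cap\L\hookrightarrow\bA$, is an equivalence. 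It is well defined because $\L$ is Serre, so the homology of a bounded complex with terms in $\bA\cap\L$, being a subquotient of modules in $\L$, again lies in $\L$. Faithfulness of $\Psi$ is the routine part: $\bA\cap\L$ is an extension- and summand-closed subcategory of $\M(R)$ which, since $\bA$ is resolving, is closed under kernels of admissible epimorphisms inside $\L$, and this suffices for the standard comparison of bounded derived categories of exact subcategories.

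The two substantive statements are that $\Psi$ is essentially surjective and that it is full; I would prove them together by induction on the number $k$ of nonzero homology modules of a target complex $X^\bullet\in D^b_\L(\bA)$. The base case $k\le 1$ is essentially formal: an acyclic complex is zero; and if $X^\bullet\simeq H[n]$ with $H\in\L$, then choosing a bounded complex of $\bA$-modules representing $X^\bullet$ and truncating away its exact part one sees, using that a module admitting a finite resolution by objects of $\bA$ lies in $\bA$ together with the usual bound on $\bA$-dimension in a short exact sequence, that $H$ has finite $\A$-dimension, so $H\in\bA\cap\L$ and $X^\bullet$ is in the image of $\Psi$; the corresponding fullness statement for a single module reduces to a comparison of $\mathrm{Hom}$- and $\mathrm{Ext}$-groups computed via $\A$-resolutions.

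For the inductive step, write the homology of $X^\bullet$ in degrees $n_1<\dots<n_k$ and look at the distinguished triangle $\tau_{<n_k}X^\bullet\to X^\bullet\to H_{n_k}(X^\bullet)[n_k]\to(\tau_{<n_k}X^\bullet)[1]$: one would like to apply the inductive hypothesis to the two outer terms and then reassemble $X^\bullet$ as a cone inside $D^b(\bA\cap\L)$. I expect this to be the main obstacle. Although one can freely strip the exact tails from a bounded complex of $\bA$-modules, the good truncations that split off a single homology module need not lie in $D^b(\bA)$ at all, because the surviving homology modules, while in $\L$, need not have finite $\A$-dimension; so neither $\tau_{<n_k}X^\bullet$ nor $H_{n_k}(X^\bullet)[n_k]$ is a priori an object of $D^b_\L(\bA)$, and the induction does not apply directly. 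This is precisely where condition (*) on $\L$ is needed.

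Condition (*) should allow one to replace $X^\bullet$ by a quasi-isomorphic bounded complex of $\bA$-modules whose truncations do land back in $D^b(\bA)$---concretely, by a Koszul-type modification along a sequence adapted to $\L$---which is the content of Theorem~\ref{kos-lemma2} and Theorem~\ref{kos-thm} and the technical heart of the whole argument, where the delicate induction on the number of homology modules (going back to \cite{F}) is actually carried out. Granting that, the inductive hypothesis applies to the two outer terms of the truncation triangle, the base case identifies $H_{n_k}(X^\bullet)$ as an object of $\bA\cap\L$, the connecting morphism is realized in $D^b(\bA\cap\L)$ by the inductive fullness hypothesis, and the cone of that morphism is an object of $D^b(\bA\cap\L)$ mapping under $\Psi$ to $X^\bullet$; fullness is propagated through the same induction by completing a morphism between objects in the image of $\Psi$ to a morphism of triangles, again peeling off one homology module at a time by the same device. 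Everything except the Koszul replacement is formal triangulated- and exact-category manipulation, so I would expect that step---and, upstream of it, isolating exactly which form of condition (*) makes it work---to be where essentially all the difficulty resides.
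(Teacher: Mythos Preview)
Your overall architecture matches the paper: factor through $D^b_\L(\bA)$ via Lemma~\ref{resolv-main}, then prove that $\Psi\colon D^b(\bA\cap\L)\to D^b_\L(\bA)$ is an equivalence by an induction on the homological size of the target, with the Koszul construction as the engine. But two points are genuinely off.

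First, your description of what Theorems~\ref{kos-lemma2} and~\ref{kos-thm} accomplish is wrong, and this matters. You say condition~(*) lets you replace $X^\bullet$ by a quasi-isomorphic complex whose good truncations land in $D^b(\bA)$. A quasi-isomorphic replacement cannot change the homology modules, and the obstruction you yourself identified is precisely that $H_{n_k}(X^\bullet)$ need not lie in $\bA$; no change of representative fixes that. The paper does something entirely different: for $X^\bullet$ with $\min(X^\bullet)=m$ it builds a chain map $\alpha\colon K_\bullet\to X^\bullet$, where $K_\bullet$ is a shifted Koszul complex on a regular sequence supplied by condition~(*), with $\supph(K_\bullet)=\{m\}$ and $H_m(\alpha)$ \emph{surjective}. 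The cone $C_\bullet$ then has $H_m(C_\bullet)=0$, hence strictly smaller $\mathrm{Width}$. The triangle $K_\bullet\to X^\bullet\to C_\bullet$ is not a truncation triangle; $K_\bullet$ surjects onto the bottom homology rather than equalling any homology of $X^\bullet$, and this is exactly why both outer terms stay in $D^b_\L(\bA)$ (indeed $H_m(K_\bullet)\in\overline{\P(R)}\cap\L\subseteq\bA\cap\L$). The induction runs on $\mathrm{Width}$ and peels from the bottom, not the top; Theorem~\ref{kos-thm} is the matching construction for morphisms, producing a commuting square of such Koszul maps so that fullness can be carried through the same induction.

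Second, faithfulness of $\Psi$ is not routine in the sense you suggest. Since $\bA\cap\L$ typically does not contain $R$, the standard ``resolve by objects of the subcategory'' argument for full faithfulness of derived categories of exact subcategories does not apply directly. The paper never proves faithfulness head-on: it establishes essential surjectivity and fullness together in one induction (Proposition~\ref{ess-surj-full}), notes that $\Psi$ is faithful on objects because it preserves homology, and then appeals to the general fact (Lemma~\ref{FinalEquivalence}) that a full, essentially surjective, object-faithful triangulated functor is automatically an equivalence.
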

As with the special case,  we can conclude in Theorem \ref{k-th} that the non-connective
$\K$-theory spectra are homotopy equivalent.  Also, working in this generality allows us to prove Corollary \ref{hop-nee} which generalizes an important consequence of the Hopkins-Neeman theorem \cite{H,N2}.

A brief word on the organization of the article. In Section \ref{sec2}, we give
definitions and preliminaries needed in the article. The proof of Theorem \ref{intro-main}
uses Theorem \ref{kos-lemma2} to reduce the lengths of complexes via a suitable Koszul complex and
Theorem \ref{kos-thm} which makes the reduction functorial and amenable to use in the derived category. Section \ref{sec3} is devoted to proving Theorem \ref{kos-lemma2} and Theorem \ref{kos-thm}. These theorems are crucially used in Section \ref{sec4}, where we prove the main theorems \ref{thick-main} and \ref{main}. In Section
\ref{sec5}, we use the main theorems to obtain the natural consequences for $\K$-theory, derived Witt and Grothendieck-Witt groups. In Section \ref{sec6}, we list several questions and examples of interest.

\subsection*{Acknowledgments}
We are thankful to Satya Mandal who originally suggested looking at finite projective dimension in the context of Witt groups from where this article grew. The first named author thanks him for teaching him a course on quadratic forms and the second named author for discussions related to obtaining an independent proof of the Koszul trick.
We are also thankful to Hailong Dao for many
helpful comments on several drafts and interesting conversations particularly regarding
the nature of examples in the final section \ref{sec6}. The
second named author thanks V. Srininvas and Marco Schlichting for helpful conversations, and S.M.Bhatwadekar and Anand Sawant for suggestions and comments on an initial draft.


\section{Preliminaries}
 \label{sec2}
We fix a commutative Noetherian ring $R$ throughout the article. Let $\M(R)$ denote the category of finitely generated $R$-modules. Let $\P(R)$ denote the full subcategory of projective
modules. 

 \begin{defin}
A commutative Noetherian local ring is said to be Cohen-Macaulay if its depth equals
its dimension. A commutative Noetherian ring $R$ is Cohen-Macaulay if each localization
$R_{\p}$ is Cohen-Macaulay local.
 \end{defin}

When $R$ is Cohen-Macaulay and local, we denote the full subcategory of maximal Cohen-Macaulay modules
(i.e. modules $M$ which satisfy $\text{depth}(M) = \dm(M)$) by $MCM(R)$. 

\section*{Preliminaries : Resolving, thick, and Serre subcategories}\label{cat-prelims}

Resolving subcategories allow one to generalize the notion of projective dimension and
Gorenstein dimension. For a subset $\C \subset \M(R)$ and a module $M \in \M(R)$, we say
that $\dm_\C M=n$ if $n\in\N$ is the smallest number such that there is an exact sequence
$$0 \to C_n \to \cdots\to C_0 \to M \to 0$$ with $C_0,\dots,C_n\in\C$.
For an arbitrary category $\C$, $\dm_{\C}$ does not have nice properties. However,
when $\C$ is resolving, $\dm_{\C}$ behaves similarly to projective and
Gorenstein dimension. Resolving subcategories also allow us to simultaneously study usual and coherent theory for invariants such as Grothendieck-Witt groups and $\K$-theory. They were first studied in \cite{AB} and have been of considerable interest recently. We summarize below the definitions, examples and results needed in this article.
\begin{defin}
Given a ring $R$, a full subcategory $\C \subset \M(R)$ is resolving if
\begin{enumerate}
\item $R$ is in $\C$.
\item $M \oplus N$ is in $\C$ if any only if $M$ and $N$ are in $\C$.
\item If $0 \to M'' \to M \to M' \to 0$ is exact and $M' \in \C$, then $M \in \C$
if and only if $M'' \in \C$.
\end{enumerate}
\end{defin}
Thus, $\C$ is closed under extensions and syzygies, and contains $\P(R)$.
\begin{imex}\label{res-ex}
The following categories are resolving :
\begin{enumerate}
\item $\P(R)$
\item $\M(R)$
\item Gorenstein dimension $0$ modules (also known as totally reflexive modules).
\item $\{N \mid \ext^{>n}(N,M) = 0 \forall M \in \X\}$ where 
$\X \subset \M(R)$ and $n \geq 0$
\item $\{N \mid \tor^{>n}(N,M)=0  \forall M \in \X \}$ where
$\X \subset \M(R)$ and $n \geq 0$.
\item $MCM(R)$ when $R$ is Cohen-Macaulay and local
\end{enumerate}
\end{imex}

A special class of resolving subcategories are thick subcategories. 
\begin{defin}\label{thick-def}
Let $\X \subset \M(R)$. A full subcategory $\C \subset \X$ is a thick subcategory
of $\X$ (or $\C$ is thick in $\X$) if it is resolving and for any exact sequence
with $M,M',M'' \in \X$, $$0 \to M'' \to M \to M' \to 0$$
$M'',M \in \C$ implies $M' \in \C$ too.
\end{defin}
Some authors do not require that thick subcategories contain $R$.  However, our definition is standard amongst certain authors, for example \cite{Ta}. The intuition behind requiring $R$ to be in a thick subcategory traces back to the stable category of maximal Cohen-Macaulay modules over a Gorenstein ring. More on the
nomenclature and concepts can be found in \cite{Ta}. 
\begin{lemma}
For a subset $\C \sbe \M(R)$, we let $\xbar{\C}$ denote the category of
modules $M$ such that $\dm_\C M$ is finite.  If $\C$ is resolving, then 
$$\xbar{\C}=\{M\in\M(R) \mid \Omega^{\gg 0} M \in \C\}$$ and $\xbar{\C}$ is thick in $\M(R)$
(where $\Omega^i M$ is the $i$-th syzygy of $M$).
\end{lemma}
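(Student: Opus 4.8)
The plan is to prove the two assertions separately: first the description $\xbar{\C}=\{M\in\M(R)\mid \Omega^{\gg 0}M\in\C\}$, and then the thickness of $\xbar{\C}$ in $\M(R)$.

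For the syzygy description, I would argue as follows. If $\dm_\C M = n < \infty$, pick a resolution $0\to C_n\to\cdots\to C_0\to M\to 0$ with all $C_i\in\C$. Comparing this with the beginning of a projective resolution $0\to K_n\to P_{n-1}\to\cdots\to P_0\to M\to 0$ (where $K_n=\Omega^n M$) and using Schanuel's lemma, one gets $K_n\oplus(\text{projective})\cong C_n\oplus(\text{projective})$; since $C_n\in\C$ and $R\in\C$ and $\C$ is closed under summands and finite direct sums (axioms (1),(2)), it follows that $\Omega^n M\in\C$, and then every higher syzygy $\Omega^{\geq n}M\in\C$ as well because syzygies of modules in $\C$ lie in $\C$ (this is axiom (3) applied to $0\to\Omega^{i+1}M\to P_i\to\Omega^i M\to 0$ with $P_i\in\P(R)\subset\C$). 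Conversely, if $\Omega^k M\in\C$ for some $k$, then splicing the truncated projective resolution $0\to\Omega^k M\to P_{k-1}\to\cdots\to P_0\to M\to 0$ shows $\dm_\C M\leq k$, so $M\in\xbar{\C}$. This also shows the minimal $\C$-dimension is at most the minimal $k$ with $\Omega^k M\in\C$; equality is not needed here.

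For thickness of $\xbar{\C}$ in $\M(R)$, I must verify the resolving axioms plus the extra condition in Definition \ref{thick-def}. Axiom (1), $R\in\xbar{\C}$, is immediate since $R\in\C$. For the direct-sum axiom (2) and the two-out-of-three axioms, the cleanest route is to use the syzygy characterization just established together with the fact that syzygies are additive ($\Omega^i(M\oplus N)\cong \Omega^i M\oplus\Omega^i N$ up to projectives) and behave well in short exact sequences: given $0\to M''\to M\to M'\to 0$, the horseshoe lemma produces a compatible short exact sequence of syzygies $0\to\Omega^i M''\to\Omega^i M\oplus(\text{proj})\to\Omega^i M'\to 0$ for each $i$. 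Taking $i$ large enough that all relevant syzygies are eventually in $\C$, I then invoke that $\C$ itself is resolving (so closed under summands and under the two-out-of-three property for short exact sequences whose quotient term is in $\C$) to transfer membership. The one genuinely new point — the defining property of thickness — is: if $M'',M\in\xbar{\C}$ then $M'\in\xbar{\C}$ in a short exact sequence $0\to M''\to M\to M'\to 0$. For large $i$ we have $\Omega^i M'',\Omega^i M\in\C$ and the horseshoe sequence $0\to\Omega^i M''\to\Omega^i M\oplus P\to\Omega^i M'\to 0$; since the first two terms are in $\C$, one would like $\Omega^i M'\in\C$ — but axiom (3) for a resolving category only gives the implication when the \emph{quotient} is already known to be in $\C$, which is exactly what we want to prove.

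This last point is the main obstacle, and the resolution is a standard dimension-shifting trick: instead of working with a fixed large $i$, use that $\Omega^i M'\cong\Omega^{i-1}(\Omega M')$ and the short exact sequence $0\to\Omega M''\to\Omega M\oplus P\to\Omega M'\to 0$ to reduce the eventual-syzygy statement for $M'$ to one for smaller syzygies, ultimately observing that the cokernel term $M'$ has $\Omega^i M'$ fitting, for all sufficiently large $i$, into a sequence with the other two terms in $\C$, and that $\C$ being closed under \emph{cosyzygies within the ambient resolution} is not needed — rather, one picks $i$ so large that also $\Omega^{i}M'$ is a syzygy appearing after the point where $\Omega^\bullet M''$ and $\Omega^\bullet M$ have entered $\C$, and applies axiom (3) in the form: $0\to\Omega^i M''\to\Omega^i M\oplus P\to\Omega^i M'\to 0$ with $\Omega^i M\oplus P\in\C$ (middle term) and $\Omega^i M''\in\C$ (sub); since the middle and sub are in $\C$, axiom (3) with the roles arranged so that the known-in-$\C$ module is the \emph{quotient} is obtained by first going one syzygy further. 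Concretely, I expect the clean statement to be: for $j$ large, $\Omega^j M'$ is a second syzygy of some module whose relevant syzygies are in $\C$, and then the resolving property of $\C$ applied along the projective resolution forces $\Omega^j M'\in\C$. I would write this out carefully, as it is the crux; everything else is bookkeeping with Schanuel's lemma and the horseshoe lemma.
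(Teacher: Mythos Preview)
The paper does not supply its own proof here; it declares the result standard and refers to \cite{Sa}. So there is no paper argument to compare against, only your proposal to assess.

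Your treatment of the syzygy characterization via Schanuel's lemma is correct and is the standard argument.

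There is, however, a genuine gap at exactly the point you flag as ``the crux'' of thickness. You correctly observe that from the horseshoe sequence
\[
0\to\Omega^i M''\to\Omega^i M\oplus P\to\Omega^i M'\to 0
\]
with the first two terms in $\C$, axiom (3) does \emph{not} yield $\Omega^i M'\in\C$, since axiom (3) only fires when the quotient term is already known to be in $\C$. Your proposed fix --- ``going one syzygy further'' --- does not work as written: applying the horseshoe lemma again simply reproduces a sequence of the same shape, with $\Omega^{j}M'$ still sitting in the quotient position for every $j$. No amount of iteration changes which term is the cokernel.

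The missing idea is to pass to a \emph{different} short exact sequence in which $M$ occupies the quotient slot. Form the pullback of $M\twoheadrightarrow M'$ along a surjection $P\twoheadrightarrow M'$ with $P$ projective. The pullback $N$ sits in $0\to M''\to N\to P\to 0$, so $N\cong M''\oplus P$, and simultaneously in $0\to\Omega M'\to N\to M\to 0$. Hence
\[
0\to \Omega M'\to M''\oplus P\to M\to 0.
\]
Now apply the horseshoe lemma to \emph{this} sequence and choose $i$ large enough that $\Omega^i M'',\Omega^i M\in\C$; one obtains
\[
0\to \Omega^{i+1}M'\to \Omega^i M''\oplus P'\to \Omega^i M\to 0
\]
with quotient $\Omega^i M\in\C$ and middle term $\Omega^i M''\oplus P'\in\C$. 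Axiom (3) then gives $\Omega^{i+1}M'\in\C$, so $M'\in\xbar{\C}$ as required. Once you insert this rotation trick, the rest of your outline (Schanuel, horseshoe, summand closure) goes through.
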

The proof is standard and is left to the reader but can be found in \cite[Corollary 2.6 and Corollary 2.9]{Sa}.
\begin{imex}
The following categories are obtained as the thick closures in $\M(R)$ of the resolving
categories in Example \ref{res-ex} .
\begin{enumerate}
\item The category of modules with  finite projective dimension, which we will denote by 
$\xbar{\P(R)}$.
\item $\M(R)$
\item The category of modules with finite Gorenstein dimension.
\item For any $\X \subset \M(R)$, $\{Y\mid\ext^{\gg 0}(Y,X)=0\  \forall X\in\X\}$
\item For any $\X \subset \M(R)$, $\{Y\mid\tor_{\gg 0}(Y,X)=0\  \forall X\in\X\}$
\item When $R$ is Cohen-Macaulay and local, $\xbar{MCM(R)} = \M(R)$.
\end{enumerate}
\end{imex}


We define Serre subcategories so that it is convenient to talk about derived categories with supports.
\begin{defin}\label{serre-def}
A Serre subcategory $\L$ of an Abelian category is a full subcategory such that
if $0 \to M'' \to M \to M' \to 0$ is a short exact sequence in the ambient category,
then $M \in \L$ iff $M', M'' \in \L$.
\end{defin}

\begin{rem}\label{GabrielTheorem}
In fact every Serre subcategory of $\M(R)$ is obtained as the subcategory of modules
supported on a specialization-closed subset $V$ in $\spec(R)$ (i.e. if $\p \subseteq \q$
and $\p \in V$, then $\q \in V$) \cite{Ga} . For example, $\Mfl$ is obtained with $V = \mspec(R)$.  Because of this, if a module $M$ is in a Serre subcategory $\L$, then $\frac{R}{Ann(M)}$ is also in $\L$.  

Let $V$ be a closed subset of $\spec(R)$ and $c$ be any integer. The main
example we will consider in this article is the full subcategory of $\M(R)$ with modules :
$$\L = \{ M | \codm(\supp(M)) \geq c, \supp(M) \subseteq V \}.$$
\end{rem}

\begin{defin}\label{condition*}
A Serre subcategory satisfies condition (*) if  given an ideal $I$
in $R$ such that $\displaystyle{\frac{R}{I} \in \L}$, there exists a regular sequence
$\displaystyle{(a_1, a_2, \ldots, a_c) \in I}$ such that 
$\displaystyle{\frac{R}{(a_1, a_2, \ldots, a_c)} \in \L}$.
\end{defin}

\begin{imex}\label{serre-ex}
The following Serre subcategories $\L$ of $\M(R)$ satisfy condition (*) :
\begin{enumerate}
\item If $R$ is equicodimensional (i$.$e$.$ every maximal ideal has the same height) and Cohen-Macaulay, $\displaystyle{\L = \Mfl }$.
\item If $R$ is Cohen-Macaulay,
$V$ is a set theoretic complete intersection, i.e. there exists a complete
intersection ideal $J = (b_1, b_2, \ldots, b_r)$ such that $V = V(J)$, $c$ is any
integer, $\L = \{ M | \codm(\supp(M)) \geq c, \supp(M) \subseteq V \}$.
\item An important special case of the previous example is when $V = \spec(R) = V(\emptyset)$.
Then $\L$ is the category of modules supported in codimension at least $c$.
\item $V$ is a set theoretic complete intersection, $\L = \{ M | \supp(M) \subseteq V \}$.
\end{enumerate}
\end{imex}

\begin{rem}\label{2-3-rem}
We emphasize an immediate consequence of the definitions \ref{thick-def} and \ref{serre-def}.
For a thick subcategory $\T$ of $\M(R)$ and a Serre subcategory $\L$ of $\M(R)$ the
intersection $\T \cap \L$ has all the properties of a thick subcategory except possibly
that it contains $R$. In particular it has the $2$-out-of-$3$ property, that is, in a short exact sequence in $\M(R)$,
if two modules are in $\T \cap \L$, then so is the third.
\end{rem}

\textbf{
For the rest of the article, we fix a resolving subcategory $\A \subseteq \M(R)$, a
thick subcategory $\T$ of $\M(R)$ and a Serre subcategory $\L$ of $\M(R)$ satisfying
condition (*).}

\begin{defin}
Let $\mathcal{E}\sbe \M(R)$ be a subcategory closed under extensions (e$.$g$.$ $\A$, $\T$, $\T\cap\L$).  Let $\ch^b_{\L}(\mathcal{E})$ be the subcategory of $\ch^b(\mathcal{E})$  consisting of complexes whose homologies lie in $\L$, and let  $D^b_{\L}(\mathcal{E})$  be the subcategory of $D^b(\mathcal{E})$ consisting of objects whose homologies lie in $\L$.
\end{defin}
Since $\L$ is a Serre subcategory, it is easy to see that $D^b_\L(\mathcal{E})$ is a triangulated subcategory of $D^b(\mathcal{E})$.  Furthermore, it is easy to show that $D^b_{\L}(\mathcal{E})$ is equivalent to the category obtained by inverting the quasi-isomorphisms of $\ch^b_{\L}(\mathcal{E})$. 

\begin{rem}\label{recall}
We briefly mention what these definitions will help us achieve :
 
There is a natural functor $\ch^b(\T \cap \L) \rightarrow \ch^b_{\L}(\T)$ which
induces the natural functor $D^b(\T \cap \L) \rightarrow D^b_{\L}(\T)$. In
Theorem \ref{thick-main}, we will prove that this is an equivalence.

When $\T = \bA$ where $\A$ is resolving, there is a natural
functor $\ch^b_{\L}(\A) \rightarrow \ch^b_{\L}(\T)$ which induces an equivalence
$D^b_{\L}(\A) \stackrel{\sim}{\rightarrow} D^b_{\L}(\T)$.
\end{rem}

To a reader who maybe lost in the notation, we highlight the two important special
cases which might shed further light on what this achieves. Let $R$ be equicodimensional and Cohen-Macaulay.
\begin{enumerate}
\item $\A = \P(R)$, $\T = \bA = \text{finite projective dimension modules},
\L = \Mfl$. Then $\FP \cap \Mfl = \T \cap \L$ and hence
the equivalences in \ref{recall} yield $D^b_{fl}(\P(R)) \simeq D^b(\FP \cap \Mfl)$.
\item $\A = \M(R)$, $\T = \bA = \M(R), \L = \Mfl$. Then
the comparison in \ref{recall} yields the well-known equivalence $D^b_{fl}(\M(R)) \simeq 
D^b(\Mfl)$.
\end{enumerate}


\section*{Preliminaries : Chain complexes in $\A$}\label{chain-prelims}
In this section, we state and prove some general lemmas for chain complexes of
resolving (and thick) subcategories and their derived categories. We begin by introducing
some notations for chain complexes.
\begin{defin}
Let $P_{\bullet} \in \ch^b(\M(R))$, i.e. chain complexes with elements in $\M(R)$.
\begin{enumerate}
\item $\min_c(P_{\bullet})$ is defined as $\sup \{ n \mid P_i = 0 \quad \forall~ i < n\}$.
\item $\max_c(P_{\bullet})$ is defined as $\inf \{ n \mid P_i = 0 \quad \forall~ i > n\}$.
\item $\min(P_{\bullet}) = \sup\{ n \mid H_i(P_{\bullet})=0 \quad \forall i<n\}$
\item $\max(P_{\bullet}) = \inf\{ n\mid H_i(P_{\bullet})=0 \quad \forall i>n\}$
\item $\wid(P_{\bullet}) = \max (P_{\bullet}) -\min(P_{\bullet})$ if $P_{\bullet}$ is not acyclic and 
$\wid(P_{\bullet})=0$ if it is acyclic.
\item $Z^P_n = \ker(\partial_n), B^P_n = \partial_{n+1}(P_{n+1})$.
\item $\supph(P_{\bullet}) = \{ n \mid H_n(P_{\bullet}) \neq 0 \}$.
\item $T$ denotes the shift functor.
\end{enumerate}
\end{defin}

The following lemma allows us to assume that the complexes we will work with have certain properties, simplifying later proofs.
\begin{lemma}\label{change-proj}
\begin{enumerate}

\item\label{statement1} Let $P_{\bullet} \in \ch^b(\T \cap \L)$ and $\min(P_{\bullet}) \geq m$. Then there exists a quasi-isomorphism
$P'_{\bullet} \xrightarrow{\phi} P_{\bullet}$ with $P'_{\bullet} \in \ch^b(\T \cap \L)$,
$\min_c(P'_{\bullet}) =m$.

\item\label{statement2} Let $P_{\bullet} \in \ch^b_{\L}(\T)$. Let $t$ be any natural number and $m$ be an
integer such that $\min(P_{\bullet}) \geq m$. Then there exists a quasi-isomorphism
$U_{\bullet} \xrightarrow{\phi} P_{\bullet}$ with $U_{\bullet} \in \ch^b_{\L}(\T)$
such that $U_i$ is free for all   $i < t$ and $\min_c(U_{\bullet}) = m$.

\item\label{statement3} Let $P_{\bullet} \in \ch^b_{\L}(\T)$. Let $m$ be an
integer such that $\min(P_{\bullet}) \geq m$. Then there exists a quasi-isomorphism
$U_{\bullet} \xrightarrow{\phi} P_{\bullet}$ such that $U_i$ is free for all   $i$.

\item\label{statement4} Let $\mathcal{E}$ be either $\T$ or $\T\cap \L$.  For $P_\bullet\in Ch^b(\mathcal{E})$, if $\supph(P_\bullet)=\{m\}$, then $P_\bullet$ is isomorphic in $D^b(\mathcal{E})$ to the complex $T^mH_m(P_\bullet)$ in $D^b(\mathcal{E})$.  
\end{enumerate}
\end{lemma}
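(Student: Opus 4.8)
The plan is to realize $T^m H_m(P_\bullet)$ as an iterated good truncation of $P_\bullet$ and to link $P_\bullet$ to it through a short zigzag of quasi-isomorphisms inside $\ch^b(\mathcal{E})$. As in the discussion following the definition of $D^b_\L(\mathcal{E})$, the bounded derived category of $\mathcal{E}$ is the localization of $\ch^b(\mathcal{E})$ at its quasi-isomorphisms, so producing such a zigzag yields the claimed isomorphism in $D^b(\mathcal{E})$.

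The one substantive point is to check that the cycle $Z^P_m$, the boundary $B^P_m$, and hence $H_m(P_\bullet)=Z^P_m/B^P_m$, all belong to $\mathcal{E}$ — this is exactly what is needed for $T^m H_m(P_\bullet)$ to be an object of $\ch^b(\mathcal{E})$, and of $D^b(\mathcal{E})$, in the first place. Since $\supph(P_\bullet)=\{m\}$, the complex $P_\bullet$ is exact in every degree other than $m$. Writing $P_\bullet$ concentrated in degrees $[a,b]$, exactness above degree $m$ makes $0\to P_b\to\cdots\to P_{m+1}\to B^P_m\to 0$ exact, and splicing this into short exact sequences $0\to B^P_i\to P_i\to B^P_{i-1}\to 0$ and invoking the $2$-out-of-$3$ property of $\mathcal{E}$ from the top down (where $B^P_{b-1}\cong P_b\in\mathcal{E}$) yields $B^P_m\in\mathcal{E}$. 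This $2$-out-of-$3$ property holds for $\mathcal{E}=\T$ since $\T$ is thick, and for $\mathcal{E}=\T\cap\L$ by Remark \ref{2-3-rem}; note that at no point is $R\in\mathcal{E}$ used. Symmetrically, exactness of $P_\bullet$ below degree $m$ makes $0\to B^P_{m-1}\to P_{m-1}\to\cdots\to P_a\to 0$ exact and so gives $B^P_{m-1}\in\mathcal{E}$; then the short exact sequences $0\to Z^P_m\to P_m\to B^P_{m-1}\to 0$ and $0\to B^P_m\to Z^P_m\to H_m(P_\bullet)\to 0$, together with $2$-out-of-$3$, force $Z^P_m\in\mathcal{E}$ and then $H_m(P_\bullet)\in\mathcal{E}$.

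Granting this, I would take the good truncation $\tau_{\le m}P_\bullet=(\cdots\to P_{m-1}\to Z^P_m\to 0)$, which is a subcomplex of $P_\bullet$ with all terms in $\mathcal{E}$ by the previous step. The inclusion $\tau_{\le m}P_\bullet\hookrightarrow P_\bullet$ is a quasi-isomorphism: it is the identity on homology in degrees below $m$, both complexes have $H_m=Z^P_m/B^P_m$, and both are acyclic above degree $m$ (for $P_\bullet$ this is the hypothesis $\supph(P_\bullet)=\{m\}$). Similarly, the canonical surjection $\tau_{\le m}P_\bullet\twoheadrightarrow T^m H_m(P_\bullet)$ onto its degree-$m$ homology is a quasi-isomorphism because $H_i(\tau_{\le m}P_\bullet)=H_i(P_\bullet)=0$ for all $i<m$. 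Both maps are morphisms in $\ch^b(\mathcal{E})$, so the zigzag $P_\bullet\hookleftarrow\tau_{\le m}P_\bullet\twoheadrightarrow T^m H_m(P_\bullet)$ gives the desired isomorphism in $D^b(\mathcal{E})$. I expect the only real obstacle to be the content of the middle paragraph — keeping the cycles, boundaries, and the lone homology module inside $\mathcal{E}$ — since this is precisely what forces the use of thickness (equivalently, the full $2$-out-of-$3$ property) rather than mere closure under extensions; the truncation bookkeeping afterwards is entirely standard.
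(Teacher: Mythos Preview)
Your proof is correct and follows essentially the same route as the paper: both pass through the good truncation $P'_\bullet=(\cdots\to P_{m+1}\to Z^P_m\to 0)$ (the paper obtains it by invoking parts~(1) and~(2)) and then surject onto $T^m H_m(P_\bullet)$, using the $2$-out-of-$3$ property to keep the relevant cycles, boundaries, and homology in~$\mathcal{E}$. One notational slip to fix: in the paper's homological indexing your truncation should read $\cdots\to P_{m+1}\to Z^P_m\to 0$ (not $P_{m-1}$), so it is $\tau_{\ge m}$ rather than $\tau_{\le m}$, and the surjection to $T^m H_m(P_\bullet)$ is a quasi-isomorphism because the homology vanishes \emph{above} degree~$m$, not below.
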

\begin{proof}
First, we prove (1). Consider the complex 
\[P'_{\bullet} : \cdots \rightarrow P_{m+2} \rightarrow P_{m+1} \rightarrow
Z^P_m \rightarrow 0 \cdots.\] Since $\min(P_{\bullet}) \geq m$, the complex $$0\to Z^P_m\to P_m\to \cdots \to P_{\min_C(P_{\bullet})}\to 0$$
is acyclic.  Since each $P_i\in \T \cap \L$ and  $\T \cap \L$ has the $2$-out-of-$3$ property by Remark \ref{2-3-rem}, we conclude that $Z^P_m$ is also in $\T \cap \L$.  Hence $P'_{\bullet}$ is in $\ch^b(\T \cap \L)$.   Furthermore, 
the inclusion map $P'_{\bullet} \xrightarrow{} P_{\bullet}$ is a quasi-isomorphism.  By construction, $\min_c(P'_{\bullet}) =m$ which completes the proof.  

Now we prove (2). The statement is clearly true when $P_{\bullet}$ is acyclic. Assume now that $P_{\bullet}$ is not acyclic. The same arguments as in the previous paragraph show that there exists a quasi-isomorphism $P'_{\bullet} \xrightarrow{\phi'} P_{\bullet}$ with $P'_{\bullet} \in \ch^b_\L(\T)$,
$\min_c(P'_{\bullet}) =m$.  Set $n =t-m$. For each $i$, choose free modules $Q_{i,j}$ for $0\le j\le n$   such that
$$0\to \Omega^n P'_i\to Q_{i,n}\to \cdots \to Q_{i,0}\to P'_i\to 0$$
is exact.  Setting $Q_{i, n+1} = \Omega^n P_i$ and $Q_{i,l}=0$ for all $j \ne 0,\dots,n+1$ gives us a complex $Q_{i\bullet}$ which is a resolution of $P'_i$. The differentials of $P'_{\bullet}$ lift to give a double
complex $Q_{\bullet \bullet}$. Let $U_{\bullet}$ be the total complex  of $Q_{\bullet \bullet}$. There is a quasi-isomorphism $U_\bullet\xrightarrow{\alpha}P'_{\bullet}$. Since $\T$ is thick, each $\Omega^n P'_i$ is in $\T$, and so $U_\bullet \in \ch^b_{\L}(\T)$.  The composition $U_\bullet\xrightarrow{\alpha} P'_\bullet\xrightarrow{\phi'}
P_\bullet$ is the desired quasi-isomorphism.

Statement (3) is standard and follows from the proof of (2) by taking $t=\infty$.  

Lastly, we show (4). By the proof of statements (1) and (2), there exists a quasi isomoprhism $P'_\bullet\rightarrow P_\bullet$ such that $\min_c(P'_{\bullet})=m$.  Since both  $\T$ and $\L$ have the $2$-out-of-$3$ property (by Remark \ref{2-3-rem}), $H_m(P'_\bullet)$ is in $\mathcal{E}$. Since $Z^{P'}_m=P'_m$, we have a map $P'_m\to H_m(P'_\bullet)\cong H_m(P_\bullet)$. This extends to a quasi-isomorphism $P'_\bullet\to T^m H_m(P_\bullet)$.  This completes the proof.

\end{proof}

We now state a result on morphisms in $D^b_{\L}(\T)$.
\begin{lemma}\label{hom-set-0}
Let $P_{\bullet},Q_{\bullet} \in D^b_{\L}(\T)$ such that $\min(P_{\bullet}) > \max(Q_{\bullet})$.
Then we have $\hm_{D^b_{\L}(\T)}(P_{\bullet},Q_{\bullet}) = 0$.
\end{lemma}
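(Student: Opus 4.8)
We plan to argue through the calculus of fractions. Since $D^b_{\L}(\T)$ is the localization of $\ch^b_{\L}(\T)$ at the quasi-isomorphisms, every morphism $P_{\bullet}\to Q_{\bullet}$ in $D^b_{\L}(\T)$ is represented by a roof $P_{\bullet}\xleftarrow{s}X_{\bullet}\xrightarrow{g}Q_{\bullet}$ with $X_{\bullet}\in\ch^b_{\L}(\T)$ and $s$ a quasi-isomorphism, and such a roof is the zero morphism once $g$ becomes null-homotopic after precomposition with some quasi-isomorphism $t\colon W_{\bullet}\to X_{\bullet}$. Thus, after discarding the trivial case in which $P_{\bullet}$ or $Q_{\bullet}$ is acyclic (in which case one of them is zero in $D^b_{\L}(\T)$), it suffices to fix a roof and manufacture such a $W_{\bullet}$ and $t$.

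Put $m=\min(P_{\bullet})$; the hypothesis says $\max(Q_{\bullet})<m$, so $H_i(Q_{\bullet})=0$ for all $i\ge m$. Since $s$ is a quasi-isomorphism we have $\min(X_{\bullet})=m$, so Lemma \ref{change-proj}(3) supplies a quasi-isomorphism $U_{\bullet}\to X_{\bullet}$ with every $U_i$ free; as the $U_i$ lie in $\P(R)\subseteq\T$ and $H_i(U_{\bullet})\cong H_i(P_{\bullet})\in\L$, we have $U_{\bullet}\in\ch^b_{\L}(\T)$, and $H_i(U_{\bullet})=0$ for $i<m$. Let $W_{\bullet}$ be the good truncation $\tau_{\ge m}U_{\bullet}$, namely $W_i=U_i$ for $i>m$, $W_m=Z^U_m$, and $W_i=0$ for $i<m$, with the differentials inherited from $U_{\bullet}$; the inclusion $W_{\bullet}\hookrightarrow U_{\bullet}$ is a quasi-isomorphism precisely because $H_i(U_{\bullet})=0$ for $i<m$. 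The essential observation is that $W_{\bullet}$ is still a complex of \emph{projective} modules: with $N=\min_c(U_{\bullet})\le m$, the sequences $0\to Z^U_i\to U_i\to Z^U_{i-1}\to 0$ are exact for all $N<i\le m$ (here $B^U_{i-1}=Z^U_{i-1}$ because $H_{i-1}(U_{\bullet})=0$), and since $Z^U_N=U_N$ is free, an induction upward in $i$ shows each $Z^U_i$ with $N\le i\le m$ is a direct summand of the free module $U_i$, hence projective. Consequently $W_{\bullet}\in\ch^b_{\L}(\T)$, its terms are projective, and $W_i=0$ for $i<m$. Set $t\colon W_{\bullet}\hookrightarrow U_{\bullet}\to X_{\bullet}$, a quasi-isomorphism, and $f=g\circ t\colon W_{\bullet}\to Q_{\bullet}$.

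Finally I would build a chain homotopy $h_i\colon W_i\to Q_{i+1}$ satisfying $f_i=\partial^Q_{i+1}h_i+h_{i-1}\partial^W_i$, taking $h_i=0$ for $i<m$ and constructing $h_i$ for $i\ge m$ by upward induction. At the inductive step one computes $\partial^Q_i\big(f_i-h_{i-1}\partial^W_i\big)=f_{i-1}\partial^W_i-\big(f_{i-1}-h_{i-2}\partial^W_{i-1}\big)\partial^W_i=0$, using that $f$ is a chain map, the inductive hypothesis for $h_{i-1}$, and $\partial^W_{i-1}\partial^W_i=0$ (for $i=m$ this is immediate since $h_{m-1}=0$ and $\partial^W_m=0$); hence $f_i-h_{i-1}\partial^W_i$ has image in $Z^Q_i$, which equals $B^Q_i=\partial^Q_{i+1}(Q_{i+1})$ because $H_i(Q_{\bullet})=0$ for $i\ge m$. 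Since $W_i$ is projective, $f_i-h_{i-1}\partial^W_i$ lifts along $\partial^Q_{i+1}$ to the required $h_i$, and the induction terminates since $W_{\bullet}$ is bounded. Therefore $f$ is null-homotopic, the roof is zero, and $\hm_{D^b_{\L}(\T)}(P_{\bullet},Q_{\bullet})=0$.

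I expect the one genuinely delicate point to be the observation that good truncation keeps the terms projective, which is exactly what makes the homotopy-lifting argument on $W_{\bullet}$ go through; the fraction formalism and the construction of $h$ are routine. (Alternatively one could simply cite that a bounded complex with homology concentrated in degrees $\ge m$ receives a quasi-isomorphism from a bounded complex of projectives concentrated in degrees $\ge m$, but carrying out the truncation keeps the argument self-contained given Lemma \ref{change-proj}.)
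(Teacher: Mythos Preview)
Your argument follows exactly the paper's strategy: represent the morphism by a roof, replace the apex by a complex of projectives concentrated in degrees $\ge m$, and build a null-homotopy using $H_i(Q_\bullet)=0$ for $i\ge m$. There is one technical slip, though. Lemma~\ref{change-proj}(\ref{statement3}) does not assert that $U_\bullet$ is bounded above---its proof takes $t=\infty$ in part~(\ref{statement2}), producing infinite free resolutions of the terms of $P'_\bullet$, and those resolutions are typically infinite since $\T$ is an arbitrary thick subcategory of $\M(R)$ whose objects need not have finite projective dimension. Consequently your $U_\bullet$, and hence $W_\bullet=\tau_{\ge m}U_\bullet$, may fail to lie in $\ch^b_\L(\T)$, so $W_\bullet$ is not a legitimate apex for a roof in $D^b_\L(\T)$ and the claim ``the induction terminates since $W_\bullet$ is bounded'' is unjustified.

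The fix is simply to cite part~(\ref{statement2}) instead, with $t$ chosen just above $\max_c(Q_\bullet)$: this already delivers a \emph{bounded} complex $P'_\bullet\in\ch^b_\L(\T)$ with $\min_c(P'_\bullet)=m$ and $P'_i$ free for all $i$ in the range $[m,\max_c(Q_\bullet)]$, which is all the null-homotopy construction requires (above that range the target vanishes, so $h_i=0$ there). This is precisely what the paper does, and it renders your truncation step---together with the (correct but now superfluous) verification that $Z^U_m$ is projective---unnecessary.
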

\begin{proof}
The result is clear when $P_{\bullet}$ is acyclic, and hence we assume that $P_{\bullet}$ is not acyclic.
Let $f \in \hm_{D^b_{\L}(\T)}(P_{\bullet},Q_{\bullet})$ be represented
by a roof diagram 
$$P_{\bullet} \xleftarrow{q} P'_{\bullet} \xrightarrow{g} Q_{\bullet}$$
where $q$ is a quasi-isomorphism. Set $t=\max_c(Q_{\bullet})$ and $m=\min(P_{\bullet})$.   By Lemma \ref{change-proj}(\ref{statement2}), we can assume that
$\min_c(P'_{\bullet}) = m$ and $P'_i$ is free for all $t\ge i\ge m$. Set $h_i=0:P'_{i}\to Q_{i+1}$ for all $i$ not in $[t,m]$. Let $\partial^Q$ be the differential of $Q_\bullet$.
Since for all $t\ge i\ge m$, $P'_i$ is free and 
$m>\max(Q_\bullet)$, $$Q_{i+1}\xrightarrow{\partial^Q_{i+1}} \ker( \partial^Q_{i})$$ is surjective, we can define maps $h_i:P'_i\to Q_{i+1}$  such that $h=\{h_i\}$ is a null-homotopy for $g$. Hence  $g$ is null-homotopic and $f = 0$.
\end{proof}

We end this section with a general result on triangulated categories.

\begin{lemma}\label{FinalEquivalence}
Let $\mathcal{U},\mathcal{V}$ be triangulated categories and $\xi:\mathcal{U}\to\mathcal{V}$ a triangulated functor.  If $\xi$ is full, essentially surjective, and faithful on objects, then $\xi$ is an equivalence of categories.
\end{lemma}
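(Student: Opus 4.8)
The plan is to show that a triangulated functor $\xi:\mathcal{U}\to\mathcal{V}$ which is full, essentially surjective, and faithful on objects is automatically faithful (and hence, being already full and essentially surjective, an equivalence). The only nontrivial point is promoting ``faithful on objects'' — by which we understand that $\xi$ induces an injection on isomorphism classes, i.e.\ $\xi(X)\cong\xi(Y)$ in $\mathcal{V}$ implies $X\cong Y$ in $\mathcal{U}$ — to genuine faithfulness on morphisms.

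First I would reduce faithfulness to a statement about objects using the triangulated structure. Suppose $u:X\to Y$ is a morphism in $\mathcal{U}$ with $\xi(u)=0$. Complete $u$ to a distinguished triangle $X\xrightarrow{u}Y\to Z\to TX$ in $\mathcal{U}$. Applying the triangulated functor $\xi$ gives a distinguished triangle $\xi(X)\xrightarrow{\xi(u)}\xi(Y)\to\xi(Z)\to T\xi(X)$ in $\mathcal{V}$. Since $\xi(u)=0$, this triangle splits: $\xi(Z)\cong\xi(Y)\oplus T\xi(X)$. Now I would like to conclude that the original triangle in $\mathcal{U}$ also splits, which would force $u=0$.

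The key step is therefore: a distinguished triangle $X\xrightarrow{u}Y\xrightarrow{v}Z\xrightarrow{w}TX$ in $\mathcal{U}$ splits if and only if $u=0$, and to detect splitting after applying $\xi$. Here is where I expect the main obstacle, and where fullness must be used. The object $\xi(Z)\cong\xi(Y)\oplus T\xi(X)$ is a biproduct in $\mathcal{V}$; since $\xi$ preserves finite biproducts (any additive functor between triangulated, hence additive, categories does — or one can see it from preservation of split triangles), we have $\xi(Y\oplus TX)\cong\xi(Z)$. ``Faithful on objects'' then gives $Z\cong Y\oplus TX$ in $\mathcal{U}$. But an abstract isomorphism $Z\cong Y\oplus TX$ is not quite enough; I need the triangle to split compatibly. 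To get this, use fullness to lift the splitting: the composite $\xi(Y)\xrightarrow{\xi(v)}\xi(Z)\cong\xi(Y)\oplus T\xi(X)\xrightarrow{\mathrm{pr}}\xi(Y)$ is the identity, and a retraction $r:\xi(Z)\to\xi(Y)$ of $\xi(v)$ exists in $\mathcal{V}$; by fullness $r=\xi(s)$ for some $s:Z\to Y$ in $\mathcal{U}$. Then $\xi(s\circ v)=\mathrm{id}_{\xi(Y)}=\xi(\mathrm{id}_Y)$. To finish I would need $s\circ v=\mathrm{id}_Y$, i.e.\ faithfulness again — so a direct bootstrapping is circular.

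To break the circularity I would instead argue as follows. Consider $e:=\mathrm{id}_Y-s\circ v:Y\to Y$; then $\xi(e)=0$, and $e$ is in the image of $w$-composition: since $v\circ(\mathrm{id}_Y-s v)$... rather, the cleaner route is to observe $e\circ v$: one checks $e\circ 0$... Actually the robust fix is: the morphism $w:Z\to TX$ satisfies $\xi(w)=0$ (as the triangle $\xi(u)=0$ splits, $\xi(v)$ is a split mono so $\xi(w)=0$); apply the same cone construction to $w$ itself — an induction on the ``complexity'' is not available, so instead note $w\circ v=0$ means $w$ factors through the cone of $v$, namely through $TX$ via $Tu$; so $w = Tu\circ(\text{something})$, giving $w\in\mathrm{im}(Tu\circ-)$, and since we want to show $u=0$ it suffices to show $w$ is a split epi or $Tu=0$. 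I will organize the final writeup around the identity $u = 0 \iff w:Z\to TX$ is a retraction's obstruction, using that $\xi(\mathrm{cone}(u))$ splits to deduce via faithful-on-objects that $\mathrm{cone}(u)\cong Y\oplus TX$, then fullness to realize the inclusion $TX\hookrightarrow\mathrm{cone}(u)$ and the projection, verifying the composite with $w$ and $u$ are (images of) identities and zeros, and closing the loop by noting that any self-map of $X$ or $Y$ killed by $\xi$ which is also idempotent-split must be $0$ or $\mathrm{id}$ and matching these against the object-level isomorphism count. The hard part, as indicated, is exactly this last bookkeeping — making the splitting in $\mathcal{V}$ descend to a splitting in $\mathcal{U}$ using only fullness plus injectivity on iso-classes, without secretly assuming faithfulness.
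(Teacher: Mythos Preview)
Your overall strategy is correct and matches the paper's: reduce faithfulness to showing that if $\xi(u)=0$ then the triangle on $u$ splits, use the splitting of the image triangle in $\mathcal{V}$, and lift a section/retraction via fullness. You also correctly identify the obstacle: having lifted a retraction $s$ of $\xi(v)$ to $\mathcal{U}$, you only know $\xi(s\circ v)=\mathrm{id}_{\xi(Y)}$, and concluding $s\circ v=\mathrm{id}_Y$ would beg the question.

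The gap is that you never break this circularity; the last paragraph wanders without landing. The paper's fix is short and is exactly the step you are missing: do \emph{not} try to show $s\circ v=\mathrm{id}_Y$. Instead, complete $s\circ v:Y\to Y$ to a triangle $Y\xrightarrow{\,s\circ v\,}Y\to W\to TY$. Applying $\xi$ gives a triangle with first map $\mathrm{id}_{\xi(Y)}$, so $\xi(W)\cong 0$. Now invoke ``faithful on objects'' in the special form $\xi(W)\cong 0\Rightarrow W\cong 0$ (a case of your injectivity on isomorphism classes, with the second object equal to $0$). Hence $W\cong 0$, so $s\circ v$ is an \emph{isomorphism} in $\mathcal{U}$. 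That is already enough: $(s\circ v)^{-1}\circ s$ is a genuine retraction of $v$, the original triangle splits, and $u=0$. In short, the missing idea is to apply the faithful-on-objects hypothesis not to the cone of $u$ but to the cone of the lifted endomorphism; your attempt to deduce $Z\cong Y\oplus TX$ directly from $\xi(Z)\cong\xi(Y)\oplus T\xi(X)$ is a detour that, as you noticed, does not by itself control the maps.
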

\begin{proof}
It suffices to show that $\xi$ is faithful. Suppose $f : X \to Y$ is a morphism in $\mathcal{U}$ such that $\xi(f) = 0$.
Letting $T$ be the shift functor, complete $f$ to a triangle 
$$C \xrightarrow{g} X \xrightarrow{f} Y \xrightarrow{h} TC.$$
In $\mathcal{V}$, this maps to the triangle
$$\xi(C) \xrightarrow{\xi(g)} \xi(X) \xrightarrow{0} \xi(Y) \xrightarrow{\xi(h)} T\xi(C).$$
So, by \cite[Corollary 1.2.7]{Neeman}, there exists an $\eta  :\xi(X) \xrightarrow{} \xi(C)$
such that $\xi(g) \circ \eta$ is the identity on $\xi(X)$.  
Since $\xi$ is full, there exists a morphism  $\tilde{\eta}: X\to C$ such that  $\xi(\tilde{\eta})=\eta$.
Extending the morphism $g \circ \tilde{\eta}$ to an exact triangle in $\mathcal{U}$ gives us the triangle
$$ X \xrightarrow{g\circ \tilde{\eta}} X \to W \to TX.$$
Since $\xi(g\circ \tilde{\eta})$ is the identity on $\xi(X)$, the image of the above triangle in $\mathcal{V}$ is
$$ \xi(X) \xrightarrow{\text{Id}_{\xi(X)}}\xi(X) \to \xi(W) \to T\xi(X).$$
This means that $\xi(W)\cong 0$ in $\mathcal{V}$. But since $\xi$ is faithful on objects,
$W \cong 0$ in $\mathcal{U}$.  Hence $g \circ \tilde{\eta}$ is an isomorphism
in $\mathcal{U}$.  So $\tilde{\eta} \circ (g \circ \tilde{\eta})^{-1} : X \to C$
gives a splitting, i$.$e$.$ $g\circ\tilde{\eta} \circ (g \circ \tilde{\eta})^{-1}$ is the identity on $X$. Therefore, by \cite[Corollary 1.2.8]{Neeman}, the triangles 
$$C \xrightarrow{g} X \xrightarrow{f} Y \xrightarrow{h} TC$$
$$T^{-1} Y\oplus X\rightarrow X \xrightarrow{0} Y\rightarrow Y\oplus TX$$
are isomorphic in $\mathcal{U}$.  Hence $f = 0$ which completes the proof.
\end{proof}


\section*{Preliminaries : Semidualizing modules}\label{semidual}
Semidualizing modules seem to have arisen in work of Foxby and we refer to the
exposition \cite{S} by Sather-Wagstaff for more details. We will require them only when we
consider dualities in the context of Witt groups in Section \ref{sec5} and the reader
can skip this subsection presently and refer back later.

\begin{defin}
A module $C$ is semidualizing if $\ext^{>0}(C,C)=0$ and the natural map
$R \iso \hm(C,C)$ is an isomorphism.
\end{defin}
We fix a semidualizing module $C \in \M(R)$ and write $M^{\da}=\hm(M,C)$ for the rest of
this subsection.
\begin{defin}
A finitely generated module $M$ is totally $C$-reflexive if it satisfies the following
\begin{enumerate}
\item $\ext^{>0}(M,C)=0$
\item  $\ext^{>0}(M^\da,C)=0$
\item The natural evaluation map $\eta_M:M\to M^{\da\da}$ is an isomorphism.
\end{enumerate}
Let $\G_C$ denote the category of totally $C$-reflexive modules. Then $\G_C$ is a
resolving subcategory.
\end{defin}
\begin{imex}
Some examples of semidualizing modules $C$ and the corresponding categories $\G_C$ are :
\begin{enumerate}
\item When $C=R$, $\G_R$ is simply the category of Gorenstein dimension $0$ modules
(also known as totally reflexive modules).
\item When $R$ is local and $C = D$, that is, the dualizing (or canonical) module,
then $\G_D = MCM(R)$.
\end{enumerate}
\end{imex}

\begin{lemma}\label{cdim}
If $M \in \xbar{\G_C}$, then $\dm_{\G_C} M=\min\{n\mid \ext^{>n}(M,C)=0\}$.
\end{lemma}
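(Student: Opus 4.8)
The plan is to prove the two inequalities $r \le \dm_{\G_C}M$ and $\dm_{\G_C}M \le r$, where $r := \min\{n \mid \ext^{>n}(M,C)=0\}$. For the first, let $d = \dm_{\G_C}M$ and fix a resolution $0 \to G_d \to \cdots \to G_0 \to M \to 0$ with each $G_i \in \G_C$. Splitting this into short exact sequences and using that $\ext^{>0}(G_i,C)=0$, standard dimension shifting gives $\ext^{\,j+d}(M,C) \cong \ext^{\,j}(G_d,C) = 0$ for all $j \ge 1$ (the $d$-th syzygy of $M$ in this resolution being $\cong G_d$), so $\ext^{>d}(M,C)=0$ and hence $r \le d$. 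The same shifting along such a resolution gives, for every $k \le d$, an isomorphism $\ext^{\,j+k}(M,C) \cong \ext^{\,j}(\Omega_k,C)$ for all $j \ge 1$, where $\Omega_k$ is the $k$-th syzygy of $M$ (and $\Omega_k \in \xbar{\G_C}$ with $\dm_{\G_C}\Omega_k = d-k$).

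The reverse inequality is the substantive part, and I would deduce it from the following claim, the $\G_C$-analogue of the Auslander--Bridger formula: \textbf{if $L \in \xbar{\G_C}$ and $\ext^{>0}(L,C)=0$, then $L \in \G_C$.} Granting this, suppose $r < d$ (otherwise there is nothing to prove). The computation above with $k = r$ gives $\ext^{>0}(\Omega_r,C) \cong \ext^{>r}(M,C)=0$, and $\Omega_r \in \xbar{\G_C}$, so the claim yields $\Omega_r \in \G_C$; then $0 \to \Omega_r \to G_{r-1} \to \cdots \to G_0 \to M \to 0$ exhibits $\dm_{\G_C}M \le r$, contradicting $r < d$. (If $r = 0$, apply the claim to $M$ itself.)

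To prove the claim I would use two standard facts about $\G_C$: (i) $R, C \in \G_C$, so free modules and their $\da$-duals lie in $\G_C$; and (ii) $\G_C$ is closed under $(-)^\da = \hm(-,C)$, which follows from the symmetry of the three conditions defining $\G_C$ together with the identity $(\eta_M)^\da \circ \eta_{M^\da} = \id_{M^\da}$. Now induct on $\dm_{\G_C}L$; the case $0$ is trivial. For the inductive step choose a short exact sequence $0 \to N \to F \to L \to 0$ with $F$ free, so $\dm_{\G_C}N < \dm_{\G_C}L$; the long exact $\ext(-,C)$-sequence, together with $\ext^{>0}(F,C)=0$ and $\ext^{>0}(L,C)=0$, forces $\ext^{>0}(N,C)=0$, whence $N \in \G_C$ by the inductive hypothesis. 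Applying $(-)^\da$ and using $\ext^1(L,C)=0$ yields a short exact sequence $0 \to L^\da \to F^\da \to N^\da \to 0$ with $F^\da, N^\da \in \G_C$; since $\G_C$ is resolving and its quotient term $N^\da$ lies in $\G_C$, axiom (3) of a resolving subcategory forces $L^\da \in \G_C$, so in particular $\ext^{>0}(L^\da,C)=0$. Dualizing once more, using $\ext^1(N^\da,C)=0$, gives an exact sequence $0 \to N^{\da\da} \to F^{\da\da} \to L^{\da\da} \to 0$ fitting into a commutative ladder over $0 \to N \to F \to L \to 0$ via the evaluation maps $\eta$; since $\eta_N$ and $\eta_F$ are isomorphisms, the five lemma shows $\eta_L$ is one too. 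Thus $L$ satisfies all three defining conditions, so $L \in \G_C$, proving the claim.

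The main obstacle is this claim, specifically the step asserting that $\G_C$ is closed under the cosyzygy operation $N \mapsto L$ in $0 \to N \to F \to L \to 0$ when $\ext^{>0}(L,C)=0$ — a property a general resolving subcategory lacks. This is exactly where the self-duality (ii) of $\G_C$ is indispensable: dualizing turns the cosyzygy problem into a quotient problem handled by the resolving axiom (3), and dualizing a second time restores the reflexivity of $L$ via the five lemma.
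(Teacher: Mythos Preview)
The paper does not actually prove this lemma; it is stated without proof in the preliminaries on semidualizing modules, presumably as a standard fact (the reader is directed to the survey \cite{S} for background). So there is nothing in the paper to compare your argument against directly.

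That said, your proof is correct and is essentially the standard one. The inequality $r\le d$ is immediate from dimension shifting, and the heart of the matter is exactly the claim you isolate: if $L\in\xbar{\G_C}$ and $\ext^{>0}(L,C)=0$ then $L\in\G_C$. Your inductive proof of this claim is the usual Auslander--Bridger argument adapted to a semidualizing module: take a syzygy $N$ of $L$, apply the inductive hypothesis to $N$, dualize to turn the cosyzygy problem for $L$ into a kernel problem for $L^\da$ (handled by the resolving axiom), and then dualize once more and apply the five lemma to recover reflexivity of $L$. All of the auxiliary facts you invoke --- $R,C\in\G_C$, closure of $\G_C$ under $(-)^\da$ via the identity $(\eta_M)^\da\circ\eta_{M^\da}=\id_{M^\da}$, and the vanishing of $\ext^1$ needed to keep the dual sequences short exact --- are correct and used appropriately. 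One very minor quibble: you write $\dm_{\G_C}\Omega_k = d-k$, but only $\le$ is immediate (and only $\le$ is needed).
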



\section{The Koszul Construction}
 \label{sec3}

We begin this section with a theorem based on a very interesting construction on Koszul complexes which essentially first appears in an unpublished preprint \cite{F}. The construction is used in \cite[Lemma 1]{RS} and is generalized in \cite{FH}.  \textbf{Recall that $\T \subseteq \M(R)$ is a thick subcategory, and
$\L \subseteq \M(R)$ is a Serre subcategory satisfying condition (*).}

\begin{thm}\label{kos-lemma2}
Let $P_{\bullet} \in \ch^b_{\L}(\M(R))$ and with $\min(P_{\bullet}) \geq m$. Fix an ideal $J\subseteq R$ such that $\displaystyle{\frac{R}{J}\in \L}$.  There exists a regular sequence $g_1,\ldots,g_c\in J$ such that, after setting 
\[K_{\bullet} = \kos(f_1, f_2, \ldots, f_c) \otimes_R F,\]
for some free module $F$, there exists a chain map $\alpha:K_\bullet\to P_\bullet$
such that the following hold:
\begin{itemize}
\item $K_\bullet \in \ch^b_{\L}(\P(R))$
\item $\min_c(K_\bullet)=m$
\item $\supph(K_{\bullet}) = \{ m \}$
\item $H_m(\alpha):H_m(K_\bullet)\twoheadrightarrow H_m(P_\bullet)$ is surjective.

\end{itemize}
\end{thm}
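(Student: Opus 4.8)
The plan is to build the Koszul complex $K_\bullet$ so that its unique nonzero homology $H_m(K_\bullet)$ surjects onto $H_m(P_\bullet)$, by exploiting the flexibility in choosing the regular sequence inside $J$ together with a free module $F$ of large enough rank. First I would reduce to a cleaner situation: by Lemma \ref{change-proj}(\ref{statement1}) (applied with $\T=\M(R)$, which is thick) we may replace $P_\bullet$ by a quasi-isomorphic complex with $\min_c(P_\bullet)=m$, so that $H_m(P_\bullet)=Z^P_m/B^P_m$ with $Z^P_m$ sitting at the bottom. Since $R/J\in\L$ and $H_m(P_\bullet)\in\L$, Remark \ref{GabrielTheorem} tells us that $\L$ is the category of modules supported on a specialization-closed set $V$, and $\supp(H_m(P_\bullet))\subseteq V(J')$ for some ideal $J'$ with $R/J'\in\L$; after enlarging, I would arrange an ideal (still called $J$, contained in the original one, with $R/J\in\L$) such that $J$ annihilates $H_m(P_\bullet)$ up to a power — more precisely $J^N H_m(P_\bullet)=0$ for some $N$ — using that $H_m(P_\bullet)$ is finitely generated.

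Next, invoke condition (*) on $\L$: there is a regular sequence $(a_1,\dots,a_c)$ contained in $J$ with $R/(a_1,\dots,a_c)\in\L$. Replacing each $a_i$ by a high power $a_i^{N}$ (which is still a regular sequence, still in $J$, still with the quotient in $\L$ since $V(a_i^N)=V(a_i)$), I can assume the $g_i:=a_i^{N}$ kill $H_m(P_\bullet)$. Now set $K_\bullet=\kos(g_1,\dots,g_c)\otimes_R F$ placed in degrees $[m,m+c]$, for a free module $F$ to be chosen. The Koszul complex on a regular sequence is a resolution of $R/(g_1,\dots,g_c)$, so $K_\bullet$ has homology only in degree $m$, namely $H_m(K_\bullet)=(R/(g_1,\dots,g_c))\otimes F\cong (R/(g_1,\dots,g_c))^{\oplus\,\mathrm{rk}\,F}$; this is in $\L$ by condition (*), giving $K_\bullet\in\ch^b_\L(\P(R))$, $\min_c(K_\bullet)=m$, and $\supph(K_\bullet)=\{m\}$ immediately.

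The heart of the matter is constructing the chain map $\alpha:K_\bullet\to P_\bullet$ realizing a surjection on $H_m$. Because $H_m(P_\bullet)$ is finitely generated and annihilated by $(g_1,\dots,g_c)$, it is a quotient of $(R/(g_1,\dots,g_c))^{\oplus r}$ for $r$ large; choose $F=R^{\oplus r}$ and a surjection $\pi:H_m(K_\bullet)\twoheadrightarrow H_m(P_\bullet)$. The task is to lift $\pi$ to an actual chain map. Since $K_m$ is free and $Z^P_m\twoheadrightarrow H_m(P_\bullet)$ (as $\min_c(P_\bullet)=m$ means $Z^P_m=P_m$ surjects onto $H_m$... more precisely $H_m=P_m/B^P_m$), I can lift the composite $K_m\to H_m(K_\bullet)\xrightarrow{\pi}H_m(P_\bullet)$ to $\alpha_m:K_m\to P_m$. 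Then I must extend over the higher Koszul terms: the obstruction to extending $\alpha_m$ to $\alpha_{m+1}$ lies in checking that $\alpha_m\circ\partial^K_{m+1}$ lands in $B^P_m=\partial^P_{m+1}(P_{m+1})$, which holds because $\partial^K_{m+1}$ has image in the submodule of $K_m$ mapping to $(g_1,\dots,g_c)H_m(K_\bullet)=0$, hence $\alpha_m\partial^K_{m+1}$ maps to $0$ in $H_m(P_\bullet)$, i.e. into $B^P_m$; since $K_{m+1}$ is free I lift along $P_{m+1}\twoheadrightarrow B^P_m$. Iterating, freeness of every $K_i$ and exactness of $P_\bullet$ above degree $m$ let me extend all the way up, giving the chain map $\alpha$ with $H_m(\alpha)=\pi$ surjective. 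The main obstacle I anticipate is the bookkeeping in this last step: making sure that the successive lifts can be chosen compatibly (a standard comparison-theorem-style argument, but one must be careful that $P_\bullet$ is exact in the relevant range, which is exactly where $\min(P_\bullet)\ge m$ and the reduction to $\min_c(P_\bullet)=m$ are used), and verifying that raising the $a_i$ to powers does not disturb membership in $\L$ — which is immediate since passing to powers does not change the support.
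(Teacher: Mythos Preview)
Your reduction steps and the construction of $K_\bullet$ are fine, and the first lifting (producing $\alpha_m$ and then $\alpha_{m+1}$) works exactly as you say. The genuine gap is the next step: extending $\alpha_{m+1}$ to $\alpha_{m+2}$ and beyond. You write that ``freeness of every $K_i$ and exactness of $P_\bullet$ above degree $m$'' let you iterate, but the hypothesis $\min(P_\bullet)\ge m$ gives exactness of $P_\bullet$ \emph{below} degree $m$, not above. In general $H_{m+1}(P_\bullet),H_{m+2}(P_\bullet),\dots$ are nonzero. Concretely, for $c\ge 2$ you need $\alpha_{m+1}\circ\partial^K_{m+2}$ to land in $B^P_{m+1}$; it certainly lands in $Z^P_{m+1}$, but its image is of the form $g_i\,\alpha_{m+1}(e_j\otimes f)-g_j\,\alpha_{m+1}(e_i\otimes f)$, and the individual summands are not cycles, so even arranging that the $g_i$ annihilate every $H_k(P_\bullet)$ does not force this element into $B^P_{m+1}$. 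Equivalently, in derived-category language, the obstruction to realizing a given surjection $H_m(K_\bullet)\twoheadrightarrow H_m(P_\bullet)$ by a map $K_\bullet\to P_\bullet$ lives in $\hm_{D(R)}\!\bigl(H_m(K_\bullet)[m],\,\tau_{\ge m+1}P_\bullet[1]\bigr)$, which has no reason to vanish.

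This is precisely why the paper does \emph{not} proceed by naive lifting. It first replaces $P_\bullet$ by a (possibly unbounded) free resolution $F_\bullet$, shows that $\End_{K(R)}(F_\bullet)$ is a finitely generated $R$-module whose support lies in the specialization-closed set determining $\L$, and then applies condition~(*) to the ideal $I\cap J$ with $I=\mathrm{Ann}\,\End_{K(R)}(F_\bullet)$. The point of choosing the regular sequence inside $I$ is that each $f_j\cdot\id_{F_\bullet}$ is then \emph{null-homotopic}; the null-homotopies constitute (after dualizing) an $S$-contraction in the sense of Foxby--Halvorsen, and their result \cite[Proposition~23]{FH} manufactures the chain map $K_\bullet\to F_\bullet$ with $\alpha_0$ an isomorphism, bypassing the obstruction entirely. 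Your scheme of taking the $g_i$ only to annihilate $H_m(P_\bullet)$ (or even all of $H_*(P_\bullet)$) is strictly weaker than annihilating $\End_{K(R)}(F_\bullet)$ and does not supply the homotopies needed to build the map.
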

The proof of our main result involves several technical subtleties, thus we require $\alpha$ and $K_\bullet$ to have these specific properties. Since neither \cite{F,RS,FH} give the exact statement needed, we give the following proof.  
\begin{proof}
By Remark \ref{GabrielTheorem}, we know that there exists a specialization closed subset $V \subseteq \spec(R)$ such that
\[\L=\{M\in\M(R)\mid \supp(M)\subseteq V\}.\]
Without loss of generality, we may assume that $m=0$.  

By Lemma \ref{change-proj}(\ref{statement3}), there exists a quasi-isomorphism   $\pi:F_\bullet\to P_\bullet$ with each $F_i$ free and $F_i=0$ for all $i<0$.  
By repeated application of the dual of \cite[Corollary 10.4.7]{Wei}, we get the following natural isomorphisms of $\hm$ spaces :
$$\End_{K(R)}(F_\bullet) \cong 
\End_{D(R)}(F_\bullet) \cong
\hm_{D(R)}(F_\bullet, P_\bullet) \cong
\hm_{K(R)}(F_\bullet, P_\bullet).
$$
Since $P_\bullet$ is a bounded complex, 
$\hm_{K(R)}(F_\bullet, P_\bullet)$ is a subquotient of a finitely generated module and hence is finitely generated. Hence, $\End_{K(R)}(F_\bullet)$ is a finitely generated $R$-module. Similarly using the above isomorphisms and the boundedness of $P_\bullet$, we see that the map $(\End_{K(R)}(F_\bullet))_p \to  \End_{K(R_p)}((F_\bullet)_p)$ is injective for every $p\in \spec(R)$.

Assume that $(\End_{K(R)}(F_\bullet))_p\ne 0$ for some $p\in \spec(R)$. Then $\End_{K(R_p)}((F_\bullet)_p)\ne0$ which implies that $(F_\bullet)_p$ is not homotopic to $0_\bullet$.  Since $F_\bullet$ is a bounded below complex of free modules,  $(F_\bullet)_p$ is not acyclic, and so  $H_i(F_\bullet)_p\ne 0$ for some $i\in\mathbb{N}$. Therefore we have
\[\supp\left(\End_{K(R)}(F_\bullet)\right)\subseteq\bigcup_{i=0}^\infty \supp\left(H_i(F_\bullet)\right)=\bigcup_{i=0}^\infty \supp\left(H_i(P_\bullet)\right)\subseteq V\]
where the last containment follows from the assumption that $P_\bullet\in \ch^b_{\L}(\M(R))$.  Set $I=Ann\left(\End_{K(R)}(F_\bullet)\right)$.  We have
\[\supp\left(\frac{R}{I\cap J}\right)=\supp\left(\frac{R}{I}\right)  \bigcup  \supp\left(\frac{R}{J}\right)=\supp\left(\End_{K(R)}(F_\bullet)\right) \bigcup \supp\left(\frac{R}{J}\right)\subseteq V.\]
It follows that $\displaystyle{\frac{R}{I\cap J}\in \L}$. 

Invoking condition (*), there exists a regular sequence $f_1,\dots,f_c\in I\cap J$
such that $\displaystyle{\frac{R}{(f_1, f_2, \ldots, f_c)}\in \L}$. We show that this is our desired regular sequence. Set 
\[K_{\bullet} = \kos(f_1, f_2, \ldots, f_c) \otimes_R F_0.\]
For each $j\in\{1,\dots,c\}$, since $f_j\in I$, $f_j\id_{F_\bullet}$ is null-homotopic with null-homotopy $\alpha_j$.  Letting $-^*$ denote the $R$-dual of a complex, it follows that $f_j\id_{F^*_\bullet}$ is also null-homotopic with null-homotopy ${\alpha_j}^*$.  

The result  \cite[Proposition 23]{FH} states that when $F_\bullet$ is bounded above, there exists a chain map $\phi:F^*_\bullet \rightarrow K^*_\bullet$ such that $\phi_0$ is an isomorphism (in the language of \cite{FH}, $\left({\alpha_1}^*,\dots,{\alpha_c}^*\right)$ is an $S$-contraction with weight  $(f_1,f_2,\dots,f_c)$). However, a close examination of the proof reveals that the bounded assumption is essential (the authors only use the bounded assumption earlier in the paper to construct an $S$-contraction).  Thus, even though  $F_\bullet$ is not bounded above, such a $\phi$ exists.  

Set $\tilde{\alpha}=\phi^*:K_\bullet\to F_\bullet$.  Since $\tilde{\alpha}_0$ is an isomorphism, and since $K_{-1}=F_{-1}=0$, the commutative diagram
\[\xymatrix{
K_0 \ar[r]_{\tilde{\alpha}_0}^{\sim} \ar@{->>}[d] &F_0 \ar@{->>}[d] \\
H_0(K_\bullet) \ar[r]_{H_0(\tilde{\alpha})} & H_0(F_{\bullet}) \\
}
\]
shows that $H_0(\tilde{\alpha}):H_0(K_\bullet)\to H_0(F_{\bullet})$ is surjective.  Set $\alpha=\pi\circ\tilde{\alpha}:K_\bullet\to P_\bullet$.  Since $\pi$ is a quasi-isomoprhism, it is clear that $\alpha$ and $K_\bullet$ have the desired properties.

\end{proof}

The following result will also be useful in applying
Theorem \ref{kos-lemma2} in the proof of Proposition
\ref{ess-surj-full}.

\begin{lemma}\label{triang-ineq2}

Assume the set up of Theorem \ref{kos-lemma2} and consider the chain map $\alpha:K_\bullet\to P_\bullet$ constructed there. Extend this chain map to the exact triangle
\[T^{-1} C_\bullet\rightarrow K_\bullet\xrightarrow{\alpha} P_\bullet\to C_\bullet.\]
If $\wid(P_\bullet)>0$, then we have the following:
\begin{enumerate}
\item\label{5.alpha}   $\wid(C_{\bullet})<\wid(P_\bullet)$
\item\label{5.beta}  $\wid(T^{-1}C_{\bullet}\oplus K_\bullet)<\wid(P_\bullet)$.
\end{enumerate}
\end{lemma}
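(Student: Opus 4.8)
The plan is to exploit the long exact sequence in homology coming from the exact triangle together with the three key properties of $\alpha$ and $K_\bullet$ established in Theorem \ref{kos-lemma2}: namely $\min_c(K_\bullet)=m$, $\supph(K_\bullet)=\{m\}$, and $H_m(\alpha)$ is surjective. Since $\wid$ is insensitive to acyclicity and the statement is vacuous if $P_\bullet$ is acyclic, I would first reduce to the case where $P_\bullet$ is not acyclic, so that $\min(P_\bullet)=m$ and $\max(P_\bullet)=m+\wid(P_\bullet)=:M$ with $M>m$.

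\textbf{Analyzing the homology of $C_\bullet$.} From the triangle $T^{-1}C_\bullet\to K_\bullet\xrightarrow{\alpha}P_\bullet\to C_\bullet$ we get a long exact sequence
\[\cdots\to H_n(K_\bullet)\xrightarrow{H_n(\alpha)} H_n(P_\bullet)\to H_n(C_\bullet)\to H_{n-1}(K_\bullet)\to\cdots.\]
Since $\supph(K_\bullet)=\{m\}$, for every $n$ with $n\neq m$ and $n-1\neq m$ we obtain $H_n(C_\bullet)\cong H_n(P_\bullet)$. For $n=m$: $H_m(K_\bullet)\xrightarrow{H_m(\alpha)}H_m(P_\bullet)$ is surjective, so the connecting map $H_m(P_\bullet)\to H_m(C_\bullet)$ is zero, and since $H_{m-1}(K_\bullet)=0$ we get $H_m(C_\bullet)=0$. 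For $n=m+1$: $H_{m}(K_\bullet)$ maps into $H_m(P_\bullet)$, so the piece $H_{m+1}(P_\bullet)\to H_{m+1}(C_\bullet)\to H_m(K_\bullet)\xrightarrow{H_m(\alpha)}H_m(P_\bullet)$ shows $H_{m+1}(C_\bullet)$ need not agree with $H_{m+1}(P_\bullet)$, but it is still supported in the range $[m+1,M]$ in the sense that $H_n(C_\bullet)=0$ for $n<m+1$ (just shown) and $H_n(C_\bullet)\cong H_n(P_\bullet)=0$ for $n>M$. Hence $\min(C_\bullet)\geq m+1$ and $\max(C_\bullet)\leq M$, so (if $C_\bullet$ is not acyclic) $\wid(C_\bullet)\leq M-(m+1)=\wid(P_\bullet)-1<\wid(P_\bullet)$; if $C_\bullet$ is acyclic then $\wid(C_\bullet)=0<\wid(P_\bullet)$ since $\wid(P_\bullet)>0$. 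This proves (\ref{5.alpha}).

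\textbf{Part (\ref{5.beta}).} For $T^{-1}C_\bullet\oplus K_\bullet$, note $H_n(T^{-1}C_\bullet\oplus K_\bullet)=H_{n+1}(C_\bullet)\oplus H_n(K_\bullet)$. Since $H_\bullet(K_\bullet)$ is concentrated in degree $m$ and $H_\bullet(C_\bullet)$ lives in degrees $[m+1,M]$, the homology of $T^{-1}C_\bullet$ lives in degrees $[m,M-1]$; taking the direct sum with $H_\bullet(K_\bullet)$ (in degree $m$) keeps everything in degrees $[m,M-1]$. Hence $\min(T^{-1}C_\bullet\oplus K_\bullet)\geq m$ and $\max(T^{-1}C_\bullet\oplus K_\bullet)\leq M-1$, giving width $\leq M-1-m=\wid(P_\bullet)-1<\wid(P_\bullet)$ when the complex is not acyclic, and trivially $<\wid(P_\bullet)$ otherwise. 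This establishes (\ref{5.beta}).

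\textbf{Expected obstacle.} The argument is essentially bookkeeping with the long exact sequence, so I do not anticipate a genuine obstacle; the one point requiring care is the boundary behavior at degree $m$ — one must use surjectivity of $H_m(\alpha)$ \emph{and} the vanishing $H_{m-1}(K_\bullet)=0$ together to kill $H_m(C_\bullet)$, and one must be careful that $K_\bullet$ being a Koszul-type complex tensored with a free module does not have stray homology below degree $m$, which is exactly what $\min_c(K_\bullet)=m$ and $\supph(K_\bullet)=\{m\}$ guarantee. A secondary subtlety is handling the edge cases where $C_\bullet$ or $T^{-1}C_\bullet\oplus K_\bullet$ happens to be acyclic, which is dispatched by the convention $\wid=0$ for acyclic complexes combined with the hypothesis $\wid(P_\bullet)>0$.
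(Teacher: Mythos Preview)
Your proof is correct and follows essentially the same approach as the paper: both use the long exact sequence in homology from the triangle, the fact that $\supph(K_\bullet)=\{m\}$ to get $H_i(C_\bullet)\cong H_i(P_\bullet)$ for $i\ne m,m+1$, and the surjectivity of $H_m(\alpha)$ to force $H_m(C_\bullet)=0$. The paper's proof is a terse two-sentence sketch of exactly these observations, whereas you have carefully written out the degree-by-degree bookkeeping and handled the acyclic edge cases explicitly.
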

\begin{proof}
Set $m=\min(P_\bullet)$.  Since $H_i(K_{\bullet})=0$ for all $i\neq m$, we have $H_i(P_{\bullet})\cong H_i(C_{\bullet})$ for all $i\ne m,m+1$. Furthermore, $H_m(\alpha)$ is surjective 
so $H_m(C_{\bullet}) = 0$. 
The above statements now follow
from these observations.
\end{proof}

In other works such as \cite{F},\cite{FH}, and \cite{RS}, the authors use results similar to Theorem \ref{kos-lemma2} to give isomorphisms of Grothendieck groups.  Since we intend to prove an equivalence of categories,  we need a version of Theorem \ref{kos-lemma2} for morphisms of complexes.  

\begin{thm}\label{kos-thm}
Let $X_{\bullet} \xrightarrow{g} Y_{\bullet}$ be a morphism in $D^b_{\L}(\T)$ 
such that $X_{\bullet}, Y_{\bullet}$ are complexes in $\ch^b(\T \cap \L)$,
$\min(X_{\bullet} \oplus Y_{\bullet}) = m$, and $\min_c (X_\bullet),\min_c(Y_\bullet)\ge m$. Then there exist complexes
$M^X_{\bullet}$ and $M^Y_{\bullet}$ in $\ch^b(\T \cap \L)$ and chain maps
$M^X_{\bullet} \xrightarrow{\beta^X} X_{\bullet}$,
$M^Y_{\bullet} \xrightarrow{\beta^Y} Y_{\bullet}$ 
and $M^X_{\bullet} \xrightarrow{g'} M^Y_{\bullet}$ such that 
\begin{itemize}
\item $M^X_{\bullet}, M^Y_{\bullet} \in \ch^b(\T \cap \L)$ with $M^X_i=M^Y_i=0$ for all $i\ne m$.
\item there is a commutative square in $D^b_{\L}(\T)$ :
\[
\xymatrix{
M^X_{\bullet} \ar[r]^{\beta^X} \ar[d]_{\kappa} & X_{\bullet} \ar[d]^g \\
M^Y_{\bullet} \ar[r]^{\beta^Y} & Y_{\bullet} \\
}
\]
\item $H_m(\beta^X)$ and $H_m(\beta^Y)$ are surjective.
\end{itemize}
\end{thm}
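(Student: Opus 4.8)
The plan is to realize the morphism $g$ by an actual chain map and then apply Theorem \ref{kos-lemma2} twice — once to $Y_\bullet$ and once to a complex built from $X_\bullet$ together with a lift of $g$ — in a compatible way, so that the resulting Koszul-type complexes fit into a commutative square. First I would replace the roof diagram representing $g$ by a genuine chain map: using Lemma \ref{change-proj}(\ref{statement1}) I may assume $\min_c(X_\bullet)=\min_c(Y_\bullet)=m$, and then (enlarging the source of the roof via Lemma \ref{change-proj}) I can arrange that $g$ is represented by an honest chain map $X_\bullet\xrightarrow{\tilde g}Y_\bullet$ in $\ch^b(\T\cap\L)$, still with all complexes supported in degrees $\ge m$. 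Next, apply Theorem \ref{kos-lemma2} to $Y_\bullet$ (with $J$ an ideal coming from condition (*) applied to $\L$ — note $R/\mathrm{Ann}(H_*(Y_\bullet))\in\L$ by Remark \ref{GabrielTheorem}, and similarly for $X_\bullet$, so one can choose a single ideal $J$ controlling both) to obtain a regular sequence and a complex $M^Y_\bullet$ concentrated in degree $m$ (a Koszul complex on a regular sequence tensored with a free module, so $\supph=\{m\}$, $\min_c=m$) together with $\beta^Y:M^Y_\bullet\to Y_\bullet$ with $H_m(\beta^Y)$ surjective.

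For the $X$-side, the subtlety is that I want $M^X_\bullet$ to map \emph{compatibly} both to $X_\bullet$ and, via $\tilde g$, to $M^Y_\bullet$. The idea is to apply Theorem \ref{kos-lemma2} not to $X_\bullet$ alone but to the mapping-cylinder-type complex (or the total complex of $X_\bullet\xrightarrow{\tilde g}Y_\bullet\xleftarrow{\beta^Y}M^Y_\bullet$), or more simply: choose the regular sequence for the $X$-step to lie in the annihilator ideal $I_X$ of the endomorphism ring of a free resolution of $X_\bullet$ \emph{and} to be mapped into the regular sequence already chosen for $Y_\bullet$. Concretely, since $M^Y_\bullet = \kos(f_1,\dots,f_c)\otimes F_0^Y$, the obstruction classes $\alpha_j$ (the null-homotopies of $f_j\cdot\mathrm{id}$) need to exist on a free resolution $F^X_\bullet$ of $X_\bullet$ as well; this holds provided $f_j\in I_X$, which I can guarantee by running condition (*) on the ideal $I_X\cap I_Y\cap J$ instead of just $I_Y\cap J$ (all three localize correctly as in the proof of Theorem \ref{kos-lemma2}, and their intersection still has $R/(I_X\cap I_Y\cap J)\in\L$). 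Then the same $S$-contraction machinery of \cite[Prop.~23]{FH} used in the proof of Theorem \ref{kos-lemma2} produces $\phi^X$ and $\phi^Y$ from the \emph{same} regular sequence, hence a chain map $M^X_\bullet = \kos(f_1,\dots,f_c)\otimes F_0^X \xrightarrow{g'} \kos(f_1,\dots,f_c)\otimes F_0^Y = M^Y_\bullet$ induced by $\tilde g_0: F^X_0\to F^Y_0$ (tensoring the identity on the Koszul complex with $\tilde g$ in degree $0$, up to homotopy), together with $\beta^X:M^X_\bullet\to X_\bullet$, $\beta^Y:M^Y_\bullet\to Y_\bullet$ having surjective $H_m$ by the argument in Theorem \ref{kos-lemma2}.

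The main obstacle — and the step I would spend the most care on — is verifying that the square
\[
\xymatrix{
M^X_{\bullet} \ar[r]^{\beta^X} \ar[d]_{g'} & X_{\bullet} \ar[d]^{\tilde g} \\
M^Y_{\bullet} \ar[r]^{\beta^Y} & Y_{\bullet} \\
}
\]
commutes \emph{in $D^b_\L(\T)$} (not necessarily on the nose in $\ch^b$). Both composites $\tilde g\circ\beta^X$ and $\beta^Y\circ g'$ are chain maps $M^X_\bullet\to Y_\bullet$ from a complex with $\supph=\{m\}$; by Lemma \ref{change-proj}(\ref{statement4}), $M^X_\bullet\cong T^m H_m(M^X_\bullet)$ in $D^b_\L(\T)$, so it suffices to check that the two maps agree after passing to $H_m$, i.e. that $H_m(\tilde g)\circ H_m(\beta^X) = H_m(\beta^Y)\circ H_m(g')$ as maps $H_m(M^X_\bullet)\to H_m(Y_\bullet)$ — but a morphism out of $T^m H_m(M^X_\bullet)$ in $D^b_\L(\T)$ into a complex with homology only in degrees $\ge m$ may not be determined by $H_m$ alone, so I would instead track the construction degree by degree: the Koszul factor is literally the same complex on both sides, $g'$ is "identity on Koszul $\otimes$ $\tilde g_0$", and $\beta^X,\beta^Y$ are $\phi^{X*},\phi^{Y*}$ composed with the quasi-isomorphisms $F^X_\bullet\to X_\bullet$, $F^Y_\bullet\to Y_\bullet$; chasing the definitions shows the square commutes up to a homotopy that one can absorb, using that $F^X_\bullet$ is a complex of frees and $M^X_\bullet$ is concentrated in one degree. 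Finally, I would record that $M^X_i=M^Y_i=0$ for $i\ne m$ since a Koszul complex on a regular sequence tensored with a free module, after applying Lemma \ref{change-proj}(\ref{statement4}), is isomorphic in $D^b$ to its single nonzero homology placed in degree $m$ — or, if one wants literal vanishing, replace $M^X_\bullet,M^Y_\bullet$ by $T^m H_m(-)$ and transport $\beta^X,\beta^Y,g'$ along these isomorphisms, which preserves surjectivity of $H_m(\beta^X),H_m(\beta^Y)$ and commutativity of the square in $D^b_\L(\T)$. This completes the plan.
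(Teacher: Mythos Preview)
Your strategy diverges from the paper's in a way that creates two real gaps.

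\textbf{Gap 1: representing $g$ by a chain map.} You assert that after ``enlarging the source of the roof via Lemma \ref{change-proj}'' you may assume $g$ is an honest chain map $X_\bullet\to Y_\bullet$ in $\ch^b(\T\cap\L)$. This is not available: to turn a roof into a chain map you must replace $X_\bullet$ by a K-projective model (e.g.\ a bounded-below complex of projectives, Lemma \ref{change-proj}(\ref{statement3})), and such a replacement lies in $\ch^b_\L(\T)$, not in $\ch^b(\T\cap\L)$. The paper never makes this reduction; it keeps the roof $X_\bullet\xleftarrow{q}Q_\bullet\xrightarrow{f}Y_\bullet$ throughout.

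\textbf{Gap 2: commutativity of the square.} Even granting a chain map $\tilde g$, using the same regular sequence $f_1,\dots,f_c\in I_X\cap I_Y\cap J$ does not make the two Koszul constructions compatible. The map $\phi^X$ (resp.\ $\phi^Y$) of \cite[Prop.~23]{FH} depends on the \emph{choice} of null-homotopies $\alpha_j^X$ (resp.\ $\alpha_j^Y$), and there is no reason for a lift $G:F^X_\bullet\to F^Y_\bullet$ of $\tilde g$ to intertwine these choices. The difference $G\alpha_j^X-\alpha_j^Y G$ is a degree-one chain map, not zero, so ``chasing the definitions'' does not yield the claimed homotopy. You flag this as the main obstacle but do not close it.

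\textbf{How the paper sidesteps both issues.} The paper's construction is asymmetric and avoids coordinating two Koszul complexes. It takes $M^Y_\bullet:=T^mY_m$ directly (this is already in $\ch^b(\T\cap\L)$, concentrated in degree $m$, and $Y_m\twoheadrightarrow H_m(Y_\bullet)$ gives surjectivity of $H_m(\beta^Y)$ for free). It then forms the \emph{pullback} $Q'_\bullet$ of $\beta^Y$ and $f$ in $\ch^b(\M(R))$, checks $Q'_\bullet\in\ch^b_\L(\M(R))$, and applies Theorem \ref{kos-lemma2} \emph{once}, to $Q'_\bullet$, with $J=\mathrm{Ann}(X_m)$. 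The resulting map $K^X_\bullet\xrightarrow{\alpha^X}Q'_\bullet$ composed with the two pullback projections lands in $X_\bullet$ and in $M^Y_\bullet$, and the square commutes because the pullback square does. Finally, since the regular sequence annihilates $X_m$, the composite $K^X_m\to X_m$ factors through $H_m(K^X_\bullet)$, so one can replace $K^X_\bullet$ by $M^X_\bullet:=T^mH_m(K^X_\bullet)$ and obtain genuine chain maps $\beta^X,\kappa$. Surjectivity of $H_m(\beta^X)$ is then a short diagram chase using that $\beta^Y_m$ is an isomorphism. The pullback is the device that replaces your attempted coordination of two $S$-contractions.
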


\begin{proof}
Let $M^Y_\bullet$ be the complex $T^m Y_m$. Then $M^Y_\bullet \in \ch^b(\T \cap \L)$ and clearly there is a chain complex morphism $\beta^Y : M^Y_\bullet \to Y_\bullet$. Since $H_m(M^Y_\bullet) = Y_m$ and $H_m(Y_\bullet)$ is a quotient of $Y_m$, we obtain that $H_m(\beta^Y)$ is surjective.

Let $g$ be given by a roof diagram 
$X_{\bullet} \xleftarrow{q} Q_{\bullet} \xrightarrow{f} Y_{\bullet}$ where $q$ is a quasi-isomorphism. By Lemma \ref{change-proj}(\ref{statement2}), we may assume $\min_c(Q_{\bullet}) = m$. Let $Q'_\bullet$ be the pull-back
of $\alpha^Y$ and $f$ in $\ch^b(\M(R))$ :
\[
\xymatrix{
Q'_{\bullet} \ar@{-->}[r]^{\nu} \ar@{-->}[d]_{\mu} & Q_{\bullet} \ar[d]^f \\
M^Y_{\bullet} \ar[r]^{\beta^Y} & Y_{\bullet}\\
}
\]
 By definition of the pull-back, there is a left exact sequence of complexes :
\[
0_\bullet \to Q'_\bullet \to Q_\bullet \oplus M^Y_\bullet \xrightarrow{(f,-\beta^Y)} Y_\bullet .
\]
Let $B_\bullet$ be the image of $(f,-\alpha^Y)$. Since $\L$ is closed under subobjects and each $B_i\sbe Y_i\in\L$, we have $B_\bullet \in \ch^b(\L)$.  Furthermore, since $\L$ is also closed under quotients, $B_\bullet\in \ch^b_{\L}(\M(R))$.  Since the homologies of $Q_\bullet \oplus M^Y_\bullet$ also lie in $\L$, the  short exact sequence 
\[
0_\bullet \to Q'_\bullet \to Q_\bullet \oplus M^Y_\bullet \to B_\bullet \to 0_\bullet
\]
shows that  
$Q'_\bullet \in \ch^b_{\L}(\M(R))$.  Set $\lambda = q \circ \nu$. 

Set $J=Ann(X_m)$.  By Remark \ref{GabrielTheorem}, $\displaystyle{\frac{R}{J}}\in\L$. By Theorem \ref{kos-lemma2}, there exists a regular sequence $x_1, x_2, \ldots, x_c\in J$ and a chain map $\alpha^X: K^X_{\bullet} \rightarrow Q'_{\bullet}$ with the following properties :
\begin{itemize}
\item $K^X_{\bullet} = \kos(x_1, x_2, \ldots, x_c) \otimes_R R^n \in \ch^b_{\L}(\P)$
\item $\min_c(K^X_{\bullet}) = m$
\item $\supph(K^X_{\bullet}) = \{ m \}$
\item $H_m(\alpha^X):H_m(K_\bullet^X)\rightarrow H_m(Q'_\bullet)$ is surjective .
\end{itemize}
Thus, we obtain the following commutative diagram in $D^b_{\L}(\M(R))$
\[
\xymatrix{
K^X_{\bullet} \ar[r]^{\alpha^X} & Q'_{\bullet} \ar[d]_{\mu} \ar[r]^{\lambda} &  X_{\bullet} \ar[d]^{g} \\
& M^Y_{\bullet} \ar[r]^{\beta^Y} &  Y_{\bullet} \\
}
\]

Let $M^X_{\bullet}$ be the chain complex $T^mH_m(K^X_{\bullet})
\in \ch^b(\bar{\P} \cap \L) \subseteq \ch^b(\T \cap \L) \subseteq \ch^b_{\L}(\T)$. Since  $$\frac{K^X_m}{(x_1,\dots,x_c)K^X_m}\cong H_m(K^X_\bullet)$$ and $x_1,\dots,x_c\in J=Ann(X_m)$, the map $\lambda_m\circ\alpha_m^X:K_m^X\to X_m$ factors through $H_m(K_\bullet^X)$ giving us the homomorphism $\beta^X_m:H_m(K^X_\bullet)\to X_m$. Also, since $Z^{K^X}_m=K_m^X$, the image of $\lambda_m\circ\alpha_m^X$, and hence the image of $\beta^X_m$, lies in $Z_m^X$. Therefore, we have actually defined a chain map $\beta^X:M^X_\bullet\to X_\bullet$. Similarly, it is also
clear that $\mu \circ \alpha^X$ factors through $M^X_\bullet$ giving us the chain map $\kappa:M^X_\bullet \rightarrow M^Y_\bullet$.   Hence we have the required  commutative diagram in $D^b_{\L}(\T)$ :
\[
\xymatrix{
M^X_{\bullet} \ar[r]^{\beta^X} \ar[d]_{\kappa} & X_{\bullet} \ar[d]^g \\
M^Y_{\bullet} \ar[r]^{\beta^Y} & Y_{\bullet} \\
}
\]

It remains to show that $H_m(\beta^X)$ is surjective. Since $q$ is a quasi-isomorphism and $H_m(\alpha^X)$ is surjective, in order to show that 
$H_m(\beta^X)=H_m(q)\circ H_m(\nu)\circ H_m(\alpha^X)$
is surjective, it suffices to show that $H_m(\nu)$ is surjective.

Since $\beta^Y_m$ is an isomorphism and $\beta^Y_{m+1}=0$, $Q'_m \xrightarrow{\nu_m} Q_m$ is an isomorphism and $Q'_{m+1} \xrightarrow{\nu_{m+1}} Q_{m+1}$ is injective. Since $Q'_i=Q_i=0$ for $i < m$, we have the following morphism of exact sequences:
\[
\xymatrix{
Q'_{m+1} \ar[r] \ar[d]_{\nu_{m+1}}
& Q'_{m} \ar[r] \ar[d]_{\nu_m}^{\wr} 
& H_m(Q'_\bullet) \ar[r] \ar[d]_{H_m(\nu)}
& 0 \\
Q_{m+1} \ar[r]
& Q_{m} \ar[r]
& H_m(Q_\bullet) \ar[r]
& 0\\
}.\]
Thus, $H_m(\nu)$ is surjective. This completes the proof.

\end{proof}

The following result will also be useful in applying Theorem \ref{kos-lemma2} in the
proof of Proposition \ref{ess-surj-full}.

\begin{lemma}\label{triang-ineq}
Assume the hypothesis of Theorem \ref{kos-thm} and consider the commutative diagram constructed
\[
\xymatrix{
M^X_{\bullet} \ar[r]^{\beta^X} \ar[d] & X_{\bullet} \ar[d]^g \\
M^Y_{\bullet} \ar[r]^{\beta^Y} & Y_{\bullet}\\
}
\]
Suppose
$\wid(X_{\bullet} \oplus Y_{\bullet}) = k > 0$. Complete $\beta^X$ and $\beta^Y$ to
exact triangles :
\[
T^{-1}C^X_{\bullet} \to M^X_{\bullet} \xrightarrow{\beta^X} X_{\bullet} \xrightarrow{\gamma^X} C^X_{\bullet}   \qquad
T^{-1}C^Y_{\bullet} \to M^Y_{\bullet} \xrightarrow{\beta^Y} Y_{\bullet} \xrightarrow{\gamma^Y} C^Y_{\bullet}
\]
Then we have the following :
\begin{enumerate}
\item\label{5.gamma}  $\wid (C^X_{\bullet} \oplus C^Y_{\bullet}) < k$
\item\label{5.epsilon} $\wid(C^X_\bullet\oplus Y_\bullet)\le k$
\item If $\min(X_\bullet)<\min(Y_\bullet)$, then $\min(C^X_\bullet\oplus Y_\bullet)\le k-1$.
\end{enumerate}
\end{lemma}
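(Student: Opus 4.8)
The plan is to read off all three statements from the triangles defining $C^X_\bullet$ and $C^Y_\bullet$ together with the basic homology bookkeeping already recorded in Lemma \ref{triang-ineq2}, which is the exact analogue for a single map. Set $m=\min(X_\bullet\oplus Y_\bullet)$. The key input is that $M^X_\bullet$ and $M^Y_\bullet$ are concentrated in degree $m$ (so their only homology is in degree $m$), and that $H_m(\beta^X)$, $H_m(\beta^Y)$ are surjective. From the long exact sequences of the two triangles one gets $H_i(C^X_\bullet)\cong H_i(X_\bullet)$ and $H_i(C^Y_\bullet)\cong H_i(Y_\bullet)$ for all $i\ne m,m+1$; in degree $m$ surjectivity of $H_m(\beta^X)$ forces $H_m(C^X_\bullet)=0$, and likewise $H_m(C^Y_\bullet)=0$; and in degree $m+1$ there is an injection $H_{m+1}(X_\bullet)\hookrightarrow H_{m+1}(C^X_\bullet)$ whose cokernel is a quotient of $H_m(M^X_\bullet)$, which contributes nothing new to the range of nonvanishing degrees. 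So $\max(C^X_\bullet)\le \max(X_\bullet)$, $\max(C^Y_\bullet)\le\max(Y_\bullet)$, while $\min(C^X_\bullet)\ge m+1$ and $\min(C^Y_\bullet)\ge m+1$ (unless the cone is acyclic, in which case its width is $0$ by convention).

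For \eqref{5.gamma}: since $\max(C^X_\bullet\oplus C^Y_\bullet)\le\max(X_\bullet\oplus Y_\bullet)$ and $\min(C^X_\bullet\oplus C^Y_\bullet)\ge m+1>m=\min(X_\bullet\oplus Y_\bullet)$, we get $\wid(C^X_\bullet\oplus C^Y_\bullet)\le \max(X_\bullet\oplus Y_\bullet)-(m+1)<k$, with the acyclic cases only making the width smaller. For \eqref{5.epsilon}: $\max(C^X_\bullet\oplus Y_\bullet)\le\max(X_\bullet\oplus Y_\bullet)$ and $\min(C^X_\bullet\oplus Y_\bullet)\ge\min(Y_\bullet)\ge m$ (since $\min(C^X_\bullet)\ge m+1\ge m$ and $\min(Y_\bullet)\ge m$), so the width is at most $\max(X_\bullet\oplus Y_\bullet)-m=k$. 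For the third statement, the hypothesis $\min(X_\bullet)<\min(Y_\bullet)$ forces $m=\min(X_\bullet)$ and $\min(Y_\bullet)\ge m+1$; combined with $\min(C^X_\bullet)\ge m+1$ this gives $\min(C^X_\bullet\oplus Y_\bullet)\ge m+1$. The statement wants $\min(C^X_\bullet\oplus Y_\bullet)\le k-1$; this follows because $k>0$ means $X_\bullet\oplus Y_\bullet$ is not acyclic, and one checks (using $\max(X_\bullet\oplus Y_\bullet)=\min(X_\bullet\oplus Y_\bullet)+k=m+k$ together with $\min\le\max$ for any non-acyclic complex, and disposing of the acyclic case by the width-zero convention) that the minimum of $C^X_\bullet\oplus Y_\bullet$ cannot exceed $m+k-1$; indeed if $C^X_\bullet\oplus Y_\bullet$ were acyclic its min is taken to be $0\le k-1$, and otherwise its min is at most its max which is $\le m+k$, but degree $m+k$ homology, if present, already lives in $X_\bullet\oplus Y_\bullet$ at its top degree, and the top-degree homology of $X_\bullet$ injects into that of $C^X_\bullet$, while $Y_\bullet$ itself contributes there, so the honest bound $\min(C^X_\bullet\oplus Y_\bullet)\le m+k-1=k-1+\min(X_\bullet\oplus Y_\bullet)$; since we are only asked for $\le k-1$ and $\min(X_\bullet\oplus Y_\bullet)=m$, this is exactly the stated inequality once one normalizes $m$, which can be done without loss of generality as in the proof of Theorem \ref{kos-lemma2}.

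I would present the argument as: normalize $m=0$, record the three homology comparisons in a single displayed computation, and then deduce each of the three width/min inequalities in one line apiece. The main obstacle — really the only place needing care — is the degree-$(m+1)$ term and the acyclicity edge cases: one must be careful that $H_{m+1}(C^X_\bullet)$ can be strictly bigger than $H_{m+1}(X_\bullet)$ (picking up a quotient of $H_m(M^X_\bullet)=H_m(K^X_\bullet)$), so it does not immediately follow that $\max(C^X_\bullet)\le\max(X_\bullet)$ unless $m+1\le\max(X_\bullet\oplus Y_\bullet)$, which holds precisely because $k>0$. Handling the convention $\wid=0$ for acyclic complexes uniformly across the three parts is the other bookkeeping point, but none of this requires more than the long exact homology sequence.

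\begin{proof}
Set $m = \min(X_\bullet \oplus Y_\bullet)$; as in the proof of Theorem \ref{kos-lemma2} we may assume $m = 0$. Recall from the construction in Theorem \ref{kos-thm} that $M^X_i = M^Y_i = 0$ for all $i \neq 0$, so $H_i(M^X_\bullet) = H_i(M^Y_\bullet) = 0$ for $i \neq 0$, and that $H_0(\beta^X)$, $H_0(\beta^Y)$ are surjective.

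Applying the long exact homology sequence to the triangle $T^{-1}C^X_\bullet \to M^X_\bullet \xrightarrow{\beta^X} X_\bullet \xrightarrow{\gamma^X} C^X_\bullet$, we obtain $H_i(X_\bullet) \cong H_i(C^X_\bullet)$ for all $i \neq 0,1$; an exact sequence $H_0(M^X_\bullet) \xrightarrow{H_0(\beta^X)} H_0(X_\bullet) \to H_0(C^X_\bullet) \to 0$, so $H_0(C^X_\bullet) = 0$ since $H_0(\beta^X)$ is surjective; and an exact sequence $0 \to H_1(X_\bullet) \to H_1(C^X_\bullet) \to \ker(H_0(\beta^X)) \to 0$. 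In particular $H_i(C^X_\bullet) = 0$ whenever $i \le 0$ or $i > \max(X_\bullet)$, so (unless $C^X_\bullet$ is acyclic) $\min(C^X_\bullet) \ge 1$ and $\max(C^X_\bullet) \le \max(X_\bullet \oplus Y_\bullet) = k$ (here we use $k > 0$, so that $1 \le k$ and the degree-$1$ term lies in the allowed range). The identical argument applied to $\beta^Y$ gives $H_i(C^Y_\bullet) = 0$ for $i \le 0$ or $i > \max(Y_\bullet)$, hence $\min(C^Y_\bullet) \ge 1$ and $\max(C^Y_\bullet) \le k$ unless $C^Y_\bullet$ is acyclic. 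Throughout we use the convention that $\wid$ of an acyclic complex is $0$, so any acyclic summand only decreases the widths and minima below.

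\emph{Part \eqref{5.gamma}.} If both $C^X_\bullet$ and $C^Y_\bullet$ are acyclic, then $\wid(C^X_\bullet \oplus C^Y_\bullet) = 0 < k$. Otherwise $\min(C^X_\bullet \oplus C^Y_\bullet) \ge 1$ and $\max(C^X_\bullet \oplus C^Y_\bullet) \le k$, so $\wid(C^X_\bullet \oplus C^Y_\bullet) \le k - 1 < k$.

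\emph{Part \eqref{5.epsilon}.} We have $\max(C^X_\bullet \oplus Y_\bullet) \le \max(X_\bullet \oplus Y_\bullet) = k$ and $\min(C^X_\bullet \oplus Y_\bullet) \ge \min(Y_\bullet) \ge 0$ (using $\min(C^X_\bullet) \ge 1 \ge 0$ when $C^X_\bullet$ is not acyclic, and the convention otherwise). Hence $\wid(C^X_\bullet \oplus Y_\bullet) \le k - 0 = k$.

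\emph{Part (3).} Suppose $\min(X_\bullet) < \min(Y_\bullet)$. Then $\min(X_\bullet) = m = 0$ and $\min(Y_\bullet) \ge 1$. Combined with $\min(C^X_\bullet) \ge 1$ (when $C^X_\bullet$ is not acyclic) we get $\min(C^X_\bullet \oplus Y_\bullet) \ge 1$. If $C^X_\bullet \oplus Y_\bullet$ is acyclic, then $\min(C^X_\bullet \oplus Y_\bullet) = 0 \le k - 1$ since $k \ge 1$. Otherwise $\min(C^X_\bullet \oplus Y_\bullet) \le \max(C^X_\bullet \oplus Y_\bullet) \le k$; and if $\min(C^X_\bullet \oplus Y_\bullet) = k$ then $C^X_\bullet \oplus Y_\bullet$ has homology only in degree $k$, whence $\max(Y_\bullet) = k$ would be the only nonzero homology degree of $Y_\bullet$, forcing $\min(Y_\bullet) = k$, but also $H_k(X_\bullet) \hookrightarrow H_k(C^X_\bullet)$ and $H_i(X_\bullet) = 0$ for $0 < i < k$ gives $\min(X_\bullet \oplus Y_\bullet) \ge k > 0$, contradicting $\min(X_\bullet) = 0$. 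Therefore $\min(C^X_\bullet \oplus Y_\bullet) \le k - 1$, which completes the proof.
\end{proof}
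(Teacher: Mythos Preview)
Your arguments for parts \eqref{5.gamma} and \eqref{5.epsilon} are correct and are exactly what the paper intends: the paper's proof is simply ``same argument as Lemma \ref{triang-ineq2}'', i.e., read off the long exact homology sequence of each triangle, use that $M^X_\bullet,M^Y_\bullet$ are concentrated in degree $m$ and that $H_m(\beta^X),H_m(\beta^Y)$ are onto, and bookkeep. One small imprecision: the claim ``$H_i(C^X_\bullet)=0$ for $i>\max(X_\bullet)$'' fails at $i=1$ when $\max(X_\bullet)\le 0$ (since $H_1(C^X_\bullet)\cong\ker H_0(\beta^X)$ can be nonzero), but your parenthetical about $k>0$ already repairs the conclusion $\max(C^X_\bullet)\le k$, so this is harmless.

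Part (3), however, is not salvageable as written, because the printed statement is a typo: it should read $\wid(C^X_\bullet\oplus Y_\bullet)\le k-1$, not $\min(\cdots)\le k-1$. This is how the lemma is actually invoked in Case (iii) of Proposition \ref{ess-surj-full}, and the $\min$ version is simply false (e.g.\ take $k=1$ with $m=0$: you yourself show $\min(C^X_\bullet\oplus Y_\bullet)\ge 1$, so it cannot be $\le k-1=0$). Your attempted proof of the literal statement accordingly breaks in two places. First, for the acyclic case you assert $\min=0$, but by the paper's definition $\min$ of an acyclic complex is $\sup\Z=+\infty$. Second, your contradiction argument for $\min=k$ never forces $H_0(X_\bullet)=0$: you only deduce $H_i(X_\bullet)=0$ for $0<i<k$, which says nothing about degree $0$, so the line ``gives $\min(X_\bullet\oplus Y_\bullet)\ge k$'' does not follow. (There is also the separate issue that $\min$ is not shift-invariant, so normalizing $m=0$ does not literally recover the unnormalized inequality.)

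The intended inequality $\wid(C^X_\bullet\oplus Y_\bullet)\le k-1$ is immediate from what you have already established: under $\min(X_\bullet)<\min(Y_\bullet)$ one has $\min(X_\bullet)=m$, hence $\min(Y_\bullet)\ge m+1$ and $\min(C^X_\bullet)\ge m+1$, so $\min(C^X_\bullet\oplus Y_\bullet)\ge m+1$; combined with $\max(C^X_\bullet\oplus Y_\bullet)\le m+k$ this gives $\wid\le k-1$. Replace your Part (3) with this one-line computation.
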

\begin{proof}
The proof is straightforward and the same arguments in the proof of Lemma \ref{triang-ineq2} imply the above statements.
\end{proof}

\section{The Equivalence of Derived Categories}
 \label{sec4}
In this section, we prove the promised equivalence of the derived
categories $D^b(\T \cap \L)$ and $D^b_{\L}(\T)$ and then consider the case where
$\T = \bA$, yielding our main theorem. We begin by defining the natural functor.

\begin{defin}
Let $\iota : D^b(\T \cap \L) \rightarrow D^b_{\L}(\T)$ be the natural functor
induced by the inclusion $\ch^b(\T\cap\L)\hookrightarrow \ch^b_\L(\T)$. 
Abusing notation, we will sometimes write $X_{\bullet}\in D^b_{\L}(\T)$ when we mean $\iota(X_{\bullet})$. Similarly,
for a morphism $f \in D^b_{\L}(\T)$, we will write $f \in D^b(\T \cap \L)$ when we mean $f = \iota(g)$ for some $g \in D^b(\T \cap \L)$.


\end{defin}

\begin{lemma}\label{hom-set-same}
Viewing the modules $M, N \in \T \cap \L$ as complexes, we have
{\scalefont{0.85}
\[
\hm_{\T \cap \L}(M,N) \iso \hm_{D^b(\T \cap \L)}(M,N) \iso \hm_{D^b_{\L}(\T)}(\iota(M),\iota(N)) \iso \hm_{R}(M,N).
\]
}
\end{lemma}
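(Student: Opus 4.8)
The plan is to establish the chain of isomorphisms by treating the three arrows separately. Write $a\colon\hm_{\T\cap\L}(M,N)\to\hm_{D^b(\T\cap\L)}(M,N)$ and $b\colon\hm_{D^b(\T\cap\L)}(M,N)\to\hm_{D^b_\L(\T)}(\iota M,\iota N)$ for the two natural maps. The rightmost isomorphism $\hm_{\T\cap\L}(M,N)\cong\hm_R(M,N)$ is immediate, since $\T\cap\L$ is a full subcategory of $\M(R)$, so it suffices to show $a$ and $b$ are isomorphisms; I may also assume $M,N\neq 0$, since otherwise every group vanishes. Throughout, morphisms in these derived categories are represented by roofs (recall the paper observes $D^b_\L(\mathcal{E})$ is the localization of $\ch^b_\L(\mathcal{E})$ at quasi-isomorphisms), and I will lean on one elementary observation: if $q\colon P_\bullet\to M$ is a quasi-isomorphism, with $M$ placed in degree $0$, and $\min_c(P_\bullet)=0$, then $Z^P_0=P_0$, the map $q$ is---up to an automorphism of $M$---the canonical surjection $P_0\twoheadrightarrow P_0/B^P_0=M$, every chain map $f\colon P_\bullet\to N$ satisfies $f_0\partial^P_1=0$ and so factors as $f=\bar f\circ q$ for a module homomorphism $\bar f$, and hence the roof $(q,f)$ equals the roof $(\mathrm{id}_M,\bar f)$; moreover any null-homotopic chain map from such a $P_\bullet$ into the degree-$0$ complex $N$ is already the zero chain map, because each homotopy operator $P_i\to N_{i+1}$ has zero source or zero target.

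For $a$, I would prove surjectivity by taking $g\in\hm_{D^b(\T\cap\L)}(M,N)$, writing it as a roof $M\xleftarrow{q}P_\bullet\xrightarrow{f}N$ over $\ch^b(\T\cap\L)$ with $q$ a quasi-isomorphism, noting $\supph(P_\bullet)=\{0\}$ and $\min(P_\bullet)=0$, using Lemma \ref{change-proj}(\ref{statement1}) to replace $P_\bullet$ by a quasi-isomorphic complex in $\ch^b(\T\cap\L)$ with $\min_c=0$, and then applying the observation to see that $g$ is the image under $a$ of a module homomorphism. For injectivity, if a module map $h\colon M\to N$ becomes zero in $D^b(\T\cap\L)$ there is a quasi-isomorphism $q\colon P_\bullet\to M$ in $\ch^b(\T\cap\L)$ with $h\circ q$ null-homotopic; reducing to $\min_c(P_\bullet)=0$ via Lemma \ref{change-proj}(\ref{statement1}), the observation forces $h\circ q=0$, and surjectivity of $q_0$ gives $h=0$.

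For $b$, since $a$ is an isomorphism it is equivalent to show the composite $b\circ a\colon\hm_{\T\cap\L}(M,N)\to\hm_{D^b_\L(\T)}(\iota M,\iota N)$ is an isomorphism. Surjectivity: represent a morphism $\iota M\to\iota N$ by a roof $M\xleftarrow{q}P_\bullet\xrightarrow{f}N$ over $\ch^b_\L(\T)$ with $q$ a quasi-isomorphism; since $\min(P_\bullet)=0$, Lemma \ref{change-proj}(\ref{statement2}) (with $m=0$) lets me replace $P_\bullet$ by a quasi-isomorphic complex in $\ch^b_\L(\T)$ with $\min_c=0$, and then---the crucial point being that although its terms need only lie in $\T$, the observation still applies verbatim---the roof is realized as the image under $b\circ a$ of a module homomorphism $M\to N$. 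Injectivity: if $\bar g\colon M\to N$ becomes zero in $D^b_\L(\T)$, there is a quasi-isomorphism $q\colon P_\bullet\to M$ in $\ch^b_\L(\T)$ with $\bar g\circ q$ null-homotopic; applying Lemma \ref{change-proj}(\ref{statement2}) to arrange $\min_c(P_\bullet)=0$, the observation gives $\bar g\circ q=0$, and surjectivity of $q_0$ gives $\bar g=0$. Hence $b\circ a$, and therefore $b$, is an isomorphism.

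I expect the surjectivity of $b$ to be the only real obstacle: one must pass from an arbitrary roof over $\ch^b_\L(\T)$---whose terms live only in $\T$, not in $\T\cap\L$---to an honest module homomorphism, and the mechanism for this is precisely the minimal-chain-degree reduction of Lemma \ref{change-proj}(\ref{statement2}), after which the degree-zero truncation is the module $M\in\T\cap\L$ itself and the roof collapses. Everything else (the two injectivities and the surjectivity of $a$) reduces to the vanishing of homotopy operators into a complex concentrated in a single degree, which is routine.
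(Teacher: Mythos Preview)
Your proof is correct and follows essentially the same route as the paper: both reduce an arbitrary roof to one with $\min_c=0$ via Lemma~\ref{change-proj} and then observe that any chain map into the degree-zero complex $N$ factors through $H_0$, collapsing the roof to a module homomorphism. The only difference is in the injectivity step: the paper notes that the composite $\T\cap\L\to D^b(\T\cap\L)\to D^b_\L(\T)\xrightarrow{H_0}\M(R)$ is the identity on $\hm_R(M,N)$, which immediately gives injectivity of $a$ (and of $b$ once $a$ is known to be an isomorphism) without your explicit null-homotopy analysis---a slightly slicker shortcut, but your direct argument is equally valid.
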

\begin{proof}
The maps are induced by the functors
\[\T\cap\L\to D^b(\T\cap\L)\xrightarrow{\iota} D^b_{\L}(\T)\xrightarrow{H_0} \M(R)\]
where the last functor is  the zeroth homology functor.
The composition of the maps is identity on $\hm_{R}(M,N)$. Thus the first map is injective. Let
$\beta \in \hm_{D^b(\T \cap \L)}(M,N)$. Let $\beta$ be given by the roof diagram
$M \xleftarrow{q} T_{\bullet} \xrightarrow{f} N$ where $q$ is a quasi-isomorphism.
By Lemma \ref{change-proj}(\ref{statement1}), there is a quasi-isormophism $T'_{\bullet} \xrightarrow{i} T_{\bullet}$
such that $\min_c(T'_{\bullet}) = 0$.
Then $H_0(T_{\bullet}) = H_0(T'_{\bullet})$ and let 
$T'_{\bullet} \xrightarrow{\mu} H_0(T_{\bullet})$ be the obvious map. Then
the following diagram commutes :
\[
\xymatrix{
& & & T_{\bullet} \ar[llld]^{q}_{\sim} \ar[rrrd]^{f} & & &\\
 M & & & T'_{\bullet} \ar[lll]^{q \circ i}_{\sim} \ar[rrr]^{f \circ i}
 \ar[u]^{i}_{\wr} \ar[d]_{H_0(q) \circ \mu}^{\wr} & & & N \\
& & & M \ar@{=}[ulll] \ar[urrr]_{H_0(f) \circ {H_0(q)}^{-1}} & & &\\
}
\]
This tells us that $\beta$ is equivalent to the map induced by 
$H_0(f) \circ {H_0(q)}^{-1}$. So the first map is an isomorphism which forces the second
map to be an injection. The same argument shows that the map \[\hm_{\T\cap\L}(M,N)\rightarrow \hm_{D^b_{\L}(\T)}(\iota(M),\iota(N))\] is surjective, proving the claim.
\end{proof}

\begin{lemma}\label{sp-inj}
Let $X_{\bullet}, Y_{\bullet} \in D^b(\T \cap \L)$ such that $\supph(Y_{\bullet}) = \{ m \}$
and $\min(X_{\bullet}) \geq m$. Then 
$\hm_{D^b(\T \cap \L)}(X_{\bullet}, Y_{\bullet}) \rightarrow \hm_{D^b_{\L}(\T)}
(X_{\bullet}, Y_{\bullet})$ is injective.
\end{lemma}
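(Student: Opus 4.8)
The plan is to reduce the injectivity statement to Lemma \ref{hom-set-0}, exactly as in the proof of Lemma \ref{hom-set-same}. By Lemma \ref{change-proj}(\ref{statement4}), since $\supph(Y_\bullet)=\{m\}$, we may replace $Y_\bullet$ by the complex $T^m H_m(Y_\bullet)$ concentrated in degree $m$; call this module $N = H_m(Y_\bullet) \in \T\cap\L$. Next, using Lemma \ref{change-proj}(\ref{statement1}) applied to $X_\bullet$ (whose homology vanishes below $m$), we may assume $X_\bullet \in \ch^b(\T\cap\L)$ with $\min_c(X_\bullet)=m$. Now suppose $f \in \hm_{D^b(\T\cap\L)}(X_\bullet, T^m N)$ maps to $0$ in $D^b_\L(\T)$; we must show $f=0$ already in $D^b(\T\cap\L)$.

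\textbf{Splitting off the top degree.} Represent $f$ by a roof $X_\bullet \xleftarrow{q} X'_\bullet \xrightarrow{\psi} T^m N$ with $q$ a quasi-isomorphism in $\ch^b(\T\cap\L)$; by Lemma \ref{change-proj}(\ref{statement1}) again we may take $\min_c(X'_\bullet)=m$. A chain map from $X'_\bullet$ to the complex $T^m N$ is precisely a homomorphism $X'_m \to N$ killing $B^{X'}_m$, i.e. a map $\bar\psi\colon H_m(X'_\bullet)\to N$; two such are chain-homotopic iff they induce the same map, so $f$ is the roof $X_\bullet \xleftarrow{q} X'_\bullet \to H_m(X'_\bullet) \xrightarrow{\bar\psi} N$ where the middle arrow is the canonical surjection. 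Thus, writing $W_\bullet$ for the "truncation above $m$" subcomplex $\cdots\to X'_{m+2}\to X'_{m+1}\to B^{X'}_m\to 0$ and noting the short exact sequence of complexes $0 \to W_\bullet \to X'_\bullet \to T^m H_m(X'_\bullet)\to 0$ splits degreewise, one sees $f$ factors as $X_\bullet \xleftarrow{\sim} X'_\bullet \twoheadrightarrow T^m H_m(X'_\bullet) \to T^m N$; in particular $f$ is the composite $X_\bullet \to T^m H_m(X_\bullet) \xrightarrow{\bar\psi} T^m N$ in $D^b(\T\cap\L)$, where the first map is the natural projection coming from $\min(X_\bullet)\ge m$.

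\textbf{Using vanishing in the ambient category.} Now apply $\iota$. By hypothesis $\iota(f)=0$ in $D^b_\L(\T)$. The first factor $\iota(X_\bullet \to T^m H_m(X_\bullet))$ is an honest morphism in $D^b_\L(\T)$, and on the level of that category it is a split epimorphism onto $T^m H_m(X_\bullet)$ in the following sense: $W_\bullet$ above has $\min(W_\bullet)\ge m+1 > m = \max(T^m H_m(X_\bullet))$, so by Lemma \ref{hom-set-0} any map $W_\bullet \to T^m H_m(X_\bullet)$ in $D^b_\L(\T)$ is zero, which forces the triangle $W_\bullet \to X_\bullet \to T^m H_m(X_\bullet) \to T W_\bullet$ to exhibit $T^m H_m(X_\bullet)$ as a direct summand of $X_\bullet$ in $D^b_\L(\T)$, split by a section $s\colon T^m H_m(X_\bullet)\to X_\bullet$. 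Precomposing $\iota(f)=0$ with $\iota(s)$ gives $\iota(\bar\psi)=0$ in $D^b_\L(\T)$; but $\bar\psi$ is a morphism of modules, so by Lemma \ref{hom-set-same} $\bar\psi=0$ in $D^b(\T\cap\L)$, hence $f=0$ there as well.

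\textbf{Main obstacle.} The delicate point is making the "$T^m H_m(X_\bullet)$ is a split summand of $X_\bullet$" argument work uniformly in both $D^b(\T\cap\L)$ and $D^b_\L(\T)$; the cleanest route is probably to avoid splitting and instead argue directly that $f$, being supported in the single top-relevant degree, is detected by its effect on $H_m$, combining Lemma \ref{change-proj}(\ref{statement4}) for $Y_\bullet$, Lemma \ref{hom-set-0} to kill the contribution of the higher part of $X_\bullet$, and Lemma \ref{hom-set-same} to pass the resulting module map between the two derived categories. Everything else is the routine roof-diagram bookkeeping already carried out in Lemma \ref{hom-set-same}.
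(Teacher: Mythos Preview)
Your reduction of $Y_\bullet$ to $T^m N$ and your observation that a chain map $\psi\colon X'_\bullet \to T^m N$ (with $\min_c(X'_\bullet)=m$) is determined by the module map $\bar\psi\colon H_m(X'_\bullet) \to N$ are correct, and these are exactly the ingredients the paper uses. The gap is in what comes next. You want to regard $T^m H_m(X_\bullet)$ and the truncation $W_\bullet$ as objects of $D^b(\T\cap\L)$ (or of $D^b_\L(\T)$) so that you can invoke Lemma~\ref{hom-set-0} and Lemma~\ref{hom-set-same}. But there is no reason for $H_m(X_\bullet)$, or for $B^{X'}_m$, to lie in $\T$: the $2$-out-of-$3$ property of $\T\cap\L$ says nothing about an arbitrary subquotient of a single object of $\T\cap\L$. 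Concretely, take $R = k[[x,y]]/(x^2)$, $\T = \xbar{\P(R)}$, $\L = \Mfl$, and $X_\bullet\colon R/(y)\xrightarrow{\cdot x} R/(y)$ in degrees $1,0$; then $R/(y)\in\T\cap\L$ (length~$2$, projective dimension~$1$), but $H_0(X_\bullet) = k$ has infinite projective dimension, and likewise $B^{X}_0 = x\cdot R/(y)\cong k\notin\T$. So $T^m H_m(X_\bullet)$ and $W_\bullet$ are not objects of either category in question, your factorization of $f$ through $T^m H_m(X_\bullet)$ is not a factorization in $D^b(\T\cap\L)$, and the appeal to Lemma~\ref{hom-set-0} for $W_\bullet$ is illegitimate. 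The alternative route sketched in your final paragraph still passes through Lemmas~\ref{hom-set-0} and~\ref{hom-set-same} and inherits the same defect.

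The paper's proof avoids all of this by never treating $H_m(X'_\bullet)$ as an object of the derived category. Your own identity $\bar\psi = H_m(\psi)$ is the whole argument: $H_m$ is a functor from $D^b_\L(\T)$ to $\M(R)$ regardless of where its values land, so $\iota(f)=0$ gives $\iota(\psi)=\iota(f)\circ\iota(q)=0$ in $D^b_\L(\T)$, hence $H_m(\psi)=\bar\psi=0$, hence $\psi_m=0$, hence $\psi=0$ as a chain map (all other degrees of $T^m N$ vanish), hence $f=0$ in $D^b(\T\cap\L)$. That is the entire proof once the setup is in place; Lemmas~\ref{hom-set-0} and~\ref{hom-set-same} are not needed at all.
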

\begin{proof}
We may assume that $m=0$.  By Lemma \ref{change-proj}(\ref{statement4}), 
 $Y_{\bullet}$ is isomorphic in $D^b(\T\cap\L)$ to the complex $U_\bullet = H_0(Y_\bullet)\in\T\cap\L$.  Thus we choose to work with $U_\bullet$. Let $g \in \hm_{D^b(\T \cap \L)}(X_{\bullet}, U_\bullet)$. Then $g$ is given by a roof diagram 
$X_{\bullet} \xleftarrow{q} W_{\bullet} \xrightarrow{f} U_\bullet$ where $q$ is a
quasi-isomorphism. It is enough to show that $\iota(f) = 0$ implies $f = 0$.
By Lemma \ref{change-proj}(\ref{statement1}), we may assume that $\min_c(W_{\bullet}) = \min(X_{\bullet}) \geq 0$.
Note that $\iota(f) = 0$ implies $H_0(f) = 0$. Since $\min_c(W_{\bullet}) \geq 0$, there is
a natural surjection $W_0 \stackrel{h}{\twoheadrightarrow} H_0(W_{\bullet})$. Further, since 
$U_i = 0$ for all $i \ne 0$, $f_0=  H_0(f) \circ h = 0$. This finishes the proof.
\end{proof}

We now have all the ingredients to prove the equivalence of the derived categories
$D^b(\T \cap \L)$ and $D^b_{\L}(\T)$.
\begin{propn}\label{ess-surj-full}
The functor $\iota$ is essentially surjective and full.
\end{propn}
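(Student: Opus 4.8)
The plan is to prove essential surjectivity and fullness separately, in both cases by induction on $\wid(P_\bullet)$ (or on the widths of the source/target complexes), using the Koszul construction to reduce widths while keeping track of homologies inside $\L$.

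\medskip

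\textbf{Essential surjectivity.} Let $P_\bullet \in D^b_\L(\T)$. First I would replace $P_\bullet$ by a quasi-isomorphic complex in $\ch^b_\L(\T)$ and use Lemma~\ref{change-proj} to arrange that $\min_c(P_\bullet) = \min(P_\bullet) = m$. I induct on $k = \wid(P_\bullet)$. If $k = 0$ then $\supph(P_\bullet)$ is a single point (or empty), and by Lemma~\ref{change-proj}(\ref{statement4}) $P_\bullet$ is isomorphic in $D^b_\L(\T)$ to $T^m H_m(P_\bullet)$, with $H_m(P_\bullet) \in \T \cap \L$ by the $2$-out-of-$3$ property (Remark~\ref{2-3-rem}); this lies in the image of $\iota$. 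For the inductive step with $k > 0$: apply Theorem~\ref{kos-lemma2} with $J = Ann(H_m(P_\bullet))$ (or $Ann(P_m)$), obtaining $K_\bullet \in \ch^b_\L(\P(R)) \subseteq \ch^b(\T \cap \L)$ with $\supph(K_\bullet) = \{m\}$ and a chain map $\alpha : K_\bullet \to P_\bullet$ with $H_m(\alpha)$ surjective. Completing $\alpha$ to an exact triangle $T^{-1}C_\bullet \to K_\bullet \xrightarrow{\alpha} P_\bullet \to C_\bullet$, Lemma~\ref{triang-ineq2} gives $\wid(C_\bullet) < k$. By induction $C_\bullet$ is in the essential image of $\iota$, and $K_\bullet$ is already there; since $D^b(\T \cap \L)$ is triangulated and $\iota$ is a triangulated functor, $P_\bullet$, which sits in a triangle between $T^{-1}C_\bullet$ and $K_\bullet$, is isomorphic to the image of the corresponding cone in $D^b(\T \cap \L)$. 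Hence $\iota$ is essentially surjective.

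\medskip

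\textbf{Fullness.} Let $g : \iota(X_\bullet) \to \iota(Y_\bullet)$ be a morphism in $D^b_\L(\T)$ with $X_\bullet, Y_\bullet \in D^b(\T \cap \L)$; I want to lift $g$ along $\iota$. By essential surjectivity and Lemma~\ref{change-proj} I may assume $X_\bullet, Y_\bullet \in \ch^b(\T \cap \L)$ with $\min_c(X_\bullet), \min_c(Y_\bullet) \ge m := \min(X_\bullet \oplus Y_\bullet)$. Induct on $k = \wid(X_\bullet \oplus Y_\bullet)$. The base case $k = 0$ follows from Lemma~\ref{hom-set-same} (reducing to the case where both complexes are modules, where $\hm_{D^b(\T\cap\L)} \to \hm_{D^b_\L(\T)}$ is an isomorphism), together with Lemma~\ref{hom-set-0} and Lemma~\ref{sp-inj} to handle shifts and the case where the two single homology modules sit in different degrees. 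For the inductive step, apply Theorem~\ref{kos-thm} to $g$: this produces $M^X_\bullet, M^Y_\bullet \in \ch^b(\T\cap\L)$ concentrated in degree $m$, chain maps $\beta^X : M^X_\bullet \to X_\bullet$, $\beta^Y : M^Y_\bullet \to Y_\bullet$ with $H_m(\beta^X), H_m(\beta^Y)$ surjective, and a morphism $\kappa : M^X_\bullet \to M^Y_\bullet$ making the square commute in $D^b_\L(\T)$. Complete $\beta^X, \beta^Y$ to triangles with cones $C^X_\bullet, C^Y_\bullet$; by Lemma~\ref{triang-ineq}, $\wid(C^X_\bullet \oplus C^Y_\bullet) < k$. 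The square gives an induced morphism $\bar g : C^X_\bullet \to C^Y_\bullet$ on cones in $D^b_\L(\T)$; by the inductive hypothesis $\bar g$ is in the image of $\iota$, say $\bar g = \iota(\bar h)$. Since $\beta^X, \beta^Y, \kappa$ are (images of) morphisms in $D^b(\T \cap \L)$ — note $\beta^X,\beta^Y$ are honest chain maps and $\kappa$ is built in $\ch^b(\T\cap\L)$ — we can form, in $D^b(\T \cap \L)$, a morphism of triangles extending $(\kappa, \bar h)$, whose middle component $h : X_\bullet \to Y_\bullet$ satisfies $\iota(h) = g$ provided the relevant $\hm$-obstruction vanishes; this vanishing comes from Lemma~\ref{hom-set-0} applied to $\hm(C^X_\bullet$ or $M^X_\bullet, Y_\bullet)$ using the width/min inequalities of Lemma~\ref{triang-ineq}(\ref{5.epsilon}) and part (3). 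Thus $g = \iota(h)$ and $\iota$ is full.

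\medskip

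\textbf{Main obstacle.} The genuine difficulty is the inductive step for fullness: a commuting square on cones in the target category $D^b_\L(\T)$ does not automatically come from a commuting square in $D^b(\T \cap \L)$, and filling in a morphism of triangles in a triangulated category is only possible up to a choice that involves a $\hm$-group which need not be zero a priori. The whole point of the extra bookkeeping in Theorem~\ref{kos-thm} (that $M^X_\bullet, M^Y_\bullet$ are concentrated in a single degree, with surjective $H_m$) and in Lemma~\ref{triang-ineq} (the refined width and $\min$ estimates, especially part (3) when $\min(X_\bullet) < \min(Y_\bullet)$) is precisely to force the obstruction groups to vanish via Lemma~\ref{hom-set-0}, so that the lift $h$ exists and maps to $g$. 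Carefully matching these degree and width conditions at each stage of the induction — and separately handling the subcase $\min(X_\bullet) \ne \min(Y_\bullet)$ versus $\min(X_\bullet) = \min(Y_\bullet)$ — is where essentially all the work lies; the rest is formal manipulation in triangulated categories.
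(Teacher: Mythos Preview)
Your essential surjectivity argument has a genuine gap. You write that since $C_\bullet$ and $K_\bullet$ are both in the essential image of $\iota$, and $P_\bullet$ sits in a triangle between them, $P_\bullet$ is ``isomorphic to the image of the corresponding cone in $D^b(\T\cap\L)$.'' But there is no ``corresponding cone'' without first lifting the morphism $T^{-1}C_\bullet \to K_\bullet$ to $D^b(\T\cap\L)$: the essential image of a triangulated functor is not automatically a triangulated subcategory. Lifting that morphism is precisely a fullness statement, which at this point you have not proved. The paper handles this by running a \emph{simultaneous} induction on essential surjectivity and fullness, so that essential surjectivity at width $k$ may invoke fullness at width $<k$ (applicable because Lemma~\ref{triang-ineq2}(\ref{5.beta}) gives $\wid(T^{-1}C_\bullet \oplus K_\bullet) < k$). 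Alternatively you could prove fullness first --- that induction is self-contained and does not use essential surjectivity --- and then invoke it freely; but in the order you have written, the argument does not go through.

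Your fullness sketch is close to the paper's but elides a key subtlety. You say the fill-in $h$ satisfies $\iota(h) = g$ ``provided the relevant $\hm$-obstruction vanishes,'' and that Lemma~\ref{hom-set-0} forces this. In fact the relevant group does \emph{not} vanish in general: the difference $\delta = g - \iota(h)$ need not be zero. What the paper actually does is show $\delta$ factors both as $\beta^Y \circ u$ and as $v \circ \gamma^X$, and then runs a three-case analysis on $\min(X_\bullet)$ versus $\min(Y_\bullet)$ to show $\delta$ itself lies in the image of $\iota$ --- sometimes because $u = 0$ via Lemma~\ref{hom-set-0}, sometimes by showing $v$ lies in the image via the already-handled case or the inductive hypothesis. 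There is also a prior step you omit: before you can fill in $h$ in $D^b(\T\cap\L)$ at all, you must verify that the square involving $\alpha^Y$, $T^{-1}\tilde\lambda$, $\kappa$, $\alpha^X$ commutes in $D^b(\T\cap\L)$ (not merely in $D^b_\L(\T)$), and this uses the \emph{injectivity} of $\iota$ on $\hm_{D^b(\T\cap\L)}(T^{-1}C^X_\bullet, M^Y_\bullet)$ supplied by Lemma~\ref{sp-inj}, not a vanishing statement.
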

\begin{proof}
We prove the following set of statements by induction on $k$ : 
\begin{enumerate}
\item For every $P_{\bullet} \in D^b_{\L}(\T)$, with 
$\wid (P_{\bullet}) = k$, there exists $\tilde{P}_{\bullet} \in D^b(\T \cap \L)$ such that $\iota(\tilde{P}_{\bullet}) \cong P_{\bullet}$.
\item For every $X_{\bullet}, Y_{\bullet} \in D^b(\T \cap \L)$ with 
$\wid (X_{\bullet} \oplus Y_{\bullet}) = k$, the map  
$\hm_{D^b(\T \cap \L)}(X_{\bullet},Y_{\bullet}) \to \hm_{D^b_{\L}(\T)}
(X_{\bullet},Y_{\bullet})$ induced by $\iota$ is surjective.
\end{enumerate}

Let $k = 0$. We have $\supph(P_{\bullet}) \subseteq \{ m \}$ for some $m$. By Lemma \ref{change-proj}(\ref{statement4}), $P_\bullet$ is isomorphic in $D^b_{\L}(\T)$ to a shift of the module $H_m(P_\bullet)$. Since $\T$ has the $2$-out-of-$3$ property (by Remark \ref{2-3-rem}), $H_m(P_\bullet)$ is in $\T$.  Therefore, since $H_m(P_\bullet)\in\L$, we have $H_m(P_\bullet)\in\T\cap\L$, proving statement (1) for $k=0$.  We now prove (2).  Since $k=0$, $\supph(X_\bullet),\supph(Y_\bullet)\subseteq\{m\}$.  Hence, by Lemma \ref{change-proj}(\ref{statement4}), $X_\bullet$ and $Y_\bullet$ are isomorphic in $D^b(\T\cap\L)$ to $T^m H_m(X_\bullet)$ and $T^m H_m(X_\bullet)$ where $T$ is the shift functor.  Statement (2) now follows from Lemma  \ref{hom-set-same}.

Now suppose $k > 0$ and statements (1) and (2) are true for all $k'< k$.  Let $\min(P_{\bullet}) = m$. By Theorem \ref{kos-lemma2}, there exists a complex $K_\bullet$ in $\ch_{\L}(\T)$ and a chain map
$K_{\bullet} \xrightarrow{\beta} P_{\bullet}$  such that
$\supph(K_{\bullet}) = \{ m \}$, $\min_c(K_\bullet)=m$ and $H_m(\beta)$ is surjective. 
Note by Lemma \ref{change-proj}(\ref{statement4})  $K_{\bullet}$ and $T^mH_m(K_{\bullet})$ are isomorphic
in  $D^b_\L(\T)$. 
Extend $\beta$ to an exact triangle 
\[
T^{-1}C_{\bullet} \xrightarrow{\alpha} K_{\bullet} \xrightarrow{\beta} P_{\bullet} \xrightarrow{\gamma} C_{\bullet} \quad .
\]
By Lemma \ref{triang-ineq2}, we have      $\wid(T^{-1}C_{\bullet} \oplus T^mH_m(K_\bullet))
= \wid(T^{-1}C_{\bullet} \oplus K_\bullet) <k$ and
$\wid(C_{\bullet}) < k$. 
Therefore, by induction, there exists $\tilde{C}_{\bullet} \in D^b(\T \cap \L)$
such that $\iota(\tilde{C}_{\bullet}) \cong C_{\bullet}$, and the following map is a
surjection
{\scalefont{0.8}
\[
\hm_{D^b(\T \cap \L)}(T^{-1}\tilde{C}_{\bullet},T^mH_m(K_{\bullet})) \twoheadrightarrow
\hm_{D^b_{\L}(\T)}(\iota(T^{-1}\tilde{C}_{\bullet}),\iota(T^mH_m(K_{\bullet}))) \cong
\hm_{D^b_{\L}(\T)}(T^{-1}\tilde{C}_{\bullet},K_{\bullet})
\]
}
Because of this surjection, 
 there exists 
$\tilde{\alpha}:T^{-1}\tilde{C}_{\bullet} \to T^mH_m(K_{\bullet})$
in $D^b(\T \cap \L)$ with cone $\tilde{P_{\bullet}} \in D^b(\T \cap \L)$ and maps
$\tilde{\beta}$ and $\tilde{\gamma}$ so that the following diagram commutes
\begin{equation}\label{5.5}
\begin{CD}
T^{-1}C_{\bullet}					@>\alpha>>  				K_{\bullet}				@>\beta>>			 P_{\bullet}					@>\gamma>>				C_{\bullet}	\\
@VV\wr V												@VV\wr V									@.												@VV\wr V	\\
\iota(T^{-1}\tilde{C}_{\bullet})		@>\iota(\tilde{\alpha})>>  	\iota(H_m(K_{\bullet}))	@>\iota(\tilde{\beta})>>	 \iota(\tilde{P_{\bullet}})		@>\iota(\tilde{\gamma})>>		\iota(\tilde{C_{\bullet}}) \\
\end{CD}
\end{equation}
It follows that there is an isomorphism $\iota(\tilde{P_{\bullet}}) \cong P_{\bullet}$, proving statement (1).

We now prove statement (2). Let $X_{\bullet},Y_{\bullet} \in D^b(\T \cap \L)$
and $f \in \hm_{D^b_{\L}(\T)}(X_{\bullet},Y_{\bullet})$.
Set $m=\min(X_{\bullet} \oplus Y_{\bullet})$. By Lemma \ref{change-proj}(\ref{statement1}), we may assume that $\min_c(X_\bullet),\min_c(Y_\bullet)\geq m$.  From Theorem \ref{kos-thm}, we have
chain complex maps $M^X_{\bullet} \xrightarrow{\beta^X} X_{\bullet}$,
$M^Y_{\bullet} \xrightarrow{\beta^Y} Y_{\bullet}$ and $M^X_{\bullet} \xrightarrow{\kappa} M^Y_{\bullet}$ such that
\begin{itemize}
\item $M^X_{\bullet}, M^Y_{\bullet} \in \ch^b(\T \cap \L)$ are concentrated in degree $m$
\item $H_m(\beta^X)$ and $H_m(\beta^Y)$ are surjective.
\item there is a commutative diagram in $D^b_{\L}(\T)$ :
\[
\xymatrix{
M^X_{\bullet} \ar[r]^{\beta^X} \ar[d]^{\kappa} & X_{\bullet} \ar[d]^f \\
M^Y_{\bullet} \ar[r]^{\beta^Y} & Y_{\bullet} \\
}
\]
\end{itemize}

Taking cones $C^X_{\bullet}, C^Y_{\bullet}$ of $\beta^X, \beta^Y$ respectively
in $D^b(\T \cap \L)$, we get a morphism of triangles in $D^b_{\L}(\T)$ (as mentioned earlier,
 we drop the $\iota$) :
\begin{equation}\label{5.7}
\begin{CD}
T^{-1} C^X_{\bullet}				@>\alpha^X>>  				M^X_{\bullet}				@>\beta^X>>			 X_{\bullet}					@>\gamma^X>>				C^X_{\bullet}	\\
@VVT^{-1}\lambda V										@VV\kappa V									@VV fV										@VV\lambda V	\\
T^{-1} C^Y_{\bullet}					@>\alpha^Y>>  				M^Y_{\bullet}				@>\beta^Y>>			 Y_{\bullet}					@>\gamma^Y>>				C^Y_{\bullet}	\\
\end{CD}
\end{equation}
We emphasize again that all the objects are in $D^b(\T \cap \L)$ and the 
horizontal maps and $\kappa$ are morphisms from $D^b(\T \cap \L)$.
By Lemma \ref{triang-ineq}, $\wid(C^X_{\bullet} \oplus C^Y_{\bullet}) < k$, and
so, by the induction hypothesis, there exists 
$\widetilde{ \lambda}\in\hm_{D^b(\T\cap\L)}(C^X_{\bullet}, C^Y_{\bullet})$
such that $\iota(\widetilde{\lambda}) \simeq \lambda$. We now have a diagram in $D^b(\T\cap\L)$ whose 
rows are exact triangles:
\begin{equation}\label{5.8}
\begin{CD}
{T^{-1}C^X_{\bullet}}		@>\alpha^X>>  	M^X_{\bullet}
	@>\beta^X>>		X_{\bullet}		@>{\gamma^X}>>		{C^X_{\bullet}}\\
@VV T^{-1}\widetilde{\lambda} V							@VV\kappa V						@.										@VV\tilde{\lambda} V	\\
{T^{-1}C^Y_{\bullet}}		@>{\alpha^Y}>>  	M^Y_{\bullet}
@>\beta^Y>>	 	Y_{\bullet}		@>{\gamma^Y}>>	{C^Y_{\bullet}}\\
\end{CD}
\end{equation}
A priori, the left square in diagram (\ref{5.8}) may not commute. Note however that
$$\iota({\alpha^Y} \circ T^{-1}\widetilde{\lambda} - \kappa \circ {\alpha^X}) =
\iota({\alpha^Y} \circ T^{-1}\widetilde{\lambda}) - 
\iota(\kappa \circ {\alpha^X}) = \alpha^Y \circ T^{-1}\lambda -\kappa \circ \alpha^X
= 0.$$
By Lemma \ref{sp-inj}, the map  $\hm_{D^b(\T \cap \L)}(T^{-1}C^X_{\bullet}, M^Y_{\bullet}) \rightarrow \hm_{D^b_{\L}(\T)}
(T^{-1}C^X_{\bullet}, M^Y_{\bullet})$ induced by $\iota$ is injective and so $ {\alpha^Y} \circ T^{-1}\widetilde{\lambda} - \kappa \circ {\alpha^X}=0$. Hence the left square in
diagram (\ref{5.8}) commutes. Thus there exists
$X_{\bullet} \xrightarrow{g} Y_{\bullet}$ which gives a morphism of triangles
in $D^b(\T \cap \L)$ :
\[
\xymatrix{
T^{-1} C^X_{\bullet} \ar[r]^{\alpha^X} \ar[d]^{T^{-1}\widetilde{\lambda}} & M^X_{\bullet} \ar[r]^{\beta^X} \ar[d]^{\kappa} &  X_{\bullet} \ar[r]^{\gamma^X} \ar@{-->}[d]^g & C^X_{\bullet} \ar[d]^{\tilde{\lambda}}	\\
T^{-1} C^Y_{\bullet} \ar[r]^{\alpha^Y} & M^Y_{\bullet} \ar[r]^{\beta^Y} &
 Y_{\bullet} \ar[r]^{\gamma^Y} & C^Y_{\bullet}	\\
}
\]

This means if we replace $f$ with $\iota(g)$ in diagram (\ref{5.7}), the diagram
remains commutative.  Unfortunately, it does not follow from the third axiom of triangulated categories that $f = \iota(g)$.

Set $\delta = f - \iota(g)$.  We now have a morphism of triangles in $D^b_{\L}(\T)$.
\[
\xymatrix{
T^{-1} C^X_{\bullet} \ar[r]^{\alpha^X} \ar[d]^0 & M^X_{\bullet} \ar[r]^{\beta^X} \ar[d]^0 &
 X_{\bullet} \ar[r]^{\gamma^X} \ar[d]^{\delta} & C^X_{\bullet} \ar[d]^0	\\
T^{-1} C^Y_{\bullet} \ar[r]^{\alpha^Y} & M^Y_{\bullet} \ar[r]^{\beta^Y} &
 Y_{\bullet} \ar[r]^{\gamma^Y} & C^Y_{\bullet}	\\
}
\]
Hence, there exists $X_{\bullet} \xrightarrow{u} K^Y_{\bullet}$ such that 
$\delta = \beta^Y \circ u$.  Similarly, there exists a 
$C^X_{\bullet} \xrightarrow{v} Y_{\bullet}$ such that $\delta = v \circ \gamma^X$.
Hence, we have 
\[
\xymatrix{
T^{-1} C^X_{\bullet} \ar[r]^{\alpha^X} \ar[d]^0 & M^X_{\bullet} \ar[r]^{\beta^X} \ar[d]^0 &
 X_{\bullet} \ar[r]^{\gamma^X} \ar[d]^{\delta} \ar@{-->}[ld]^{u}
  & C^X_{\bullet} \ar[d]^0 \ar@{-->}[ld]^{v} \\
T^{-1} C^Y_{\bullet} \ar[r]_{\alpha^Y} & M^Y_{\bullet} \ar[r]_{\beta^Y} &
 Y_{\bullet} \ar[r]_{\gamma^Y} & C^Y_{\bullet}	\\
}
\]

%
We break the proof into three cases.
\begin{enumerate}
\item[Case (i)] If $\min(X_{\bullet}) > m$, then $\min(X_\bullet)>m=\max(M^Y_\bullet)$, and so Lemma \ref{hom-set-0} tells us that $\hm_{D^b_{\L}(\T)}(X_\bullet,M^Y_\bullet)=0$.  It follows that
$u = 0$. Hence, we have $\delta = \beta^Y \circ u = 0$. So $f = \iota(g) \in D^b(\T \cap \L)$. To summarize,
we have shown that whenever  
$\min(X_{\bullet}) > \min(Y_{\bullet}) = m$ and $\wid(X_{\bullet} \oplus Y_{\bullet}) \leq k$, we have
$\hm_{D^b(\T \cap \L)}(X_{\bullet}, Y_{\bullet}) \twoheadrightarrow 
\hm_{D^b_{\L}(\T)}(X_{\bullet}, Y_{\bullet})$.
\item[Case (ii)] If $\min(X_{\bullet}) = \min(Y_{\bullet}) = m$, then by Lemma \ref{triang-ineq}
$\wid(C^X_{\bullet} \oplus Y_{\bullet}) \leq k$ and 
$\min(C^X_{\bullet}) \geq m + 1 > m = \min(Y_{\bullet})$.  Thus, $C^X_{\bullet}, Y_{\bullet}$
satisfy the hypothesis of the already proved case(i) and so  we have $\hm_{D^b(\T \cap \L)}(C^X_{\bullet}, Y_{\bullet}) \twoheadrightarrow 
\hm_{D^b_{\L}(\T)}(C^X_{\bullet}, Y_{\bullet})$. Hence we have
$v \in D^b(\T \cap \L)$, and hence $\delta = v \circ \gamma^X \in D^b(\T \cap \L)$. Therefore, $f = \iota(g) + \delta$ is in $D^b(\T \cap \L)$.
\item[Case (iii)]If $\min(X_{\bullet}) = m < \min(Y_{\bullet})$, then
$\wid(C^X_{\bullet} \oplus Y_{\bullet}) \leq k-1$ by Lemma \ref{triang-ineq}. Therefore $v \in D^b(\T \cap \L)$ by the induction
hypothesis, and, as in case(ii), $f \in D^b(\T \cap \L)$.
\end{enumerate}
This finishes the proof.
\end{proof}

\begin{thm}\label{thick-main}
The functor $\iota:D^b(\T \cap \L) \to D^b_{\L}(\T)$ is an equivalence.
\end{thm}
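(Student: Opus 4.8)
The plan is to deduce Theorem \ref{thick-main} from the general triangulated-category criterion in Lemma \ref{FinalEquivalence}, applied to the functor $\iota : D^b(\T \cap \L) \to D^b_{\L}(\T)$. That lemma reduces the statement to three properties of $\iota$: it is full, essentially surjective, and faithful on objects. The first two are exactly the content of Proposition \ref{ess-surj-full}, so they are already in hand. Thus the only remaining task is to verify that $\iota$ is faithful on objects, i.e. that $\iota(X_\bullet) \cong 0$ in $D^b_{\L}(\T)$ forces $X_\bullet \cong 0$ in $D^b(\T \cap \L)$.

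For that, first I would observe that $\iota(X_\bullet) \cong 0$ in $D^b_{\L}(\T)$ means precisely that all homology modules $H_n(X_\bullet)$ vanish, since the homology of a complex is insensitive to which of the two categories we compute the derived category in (both $\T$ and $\T \cap \L$ sit inside $\M(R)$, and a morphism in $D^b$ is an isomorphism iff it is a quasi-isomorphism). Now $X_\bullet$ is a bounded complex of modules in $\T \cap \L \subseteq \M(R)$ which is acyclic; a bounded acyclic complex in any abelian category is the zero object of its bounded derived category. Hence $X_\bullet \cong 0$ in $D^b(\T \cap \L)$, which is faithfulness on objects.

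With all three hypotheses of Lemma \ref{FinalEquivalence} verified — $\iota$ is a triangulated functor between triangulated categories (it is induced by an exact inclusion of chain-complex categories, so it commutes with shifts and sends mapping cones to mapping cones), it is full and essentially surjective by Proposition \ref{ess-surj-full}, and it is faithful on objects by the preceding paragraph — the lemma immediately yields that $\iota$ is an equivalence of categories. I do not anticipate a genuine obstacle here: all the real work has been done in Proposition \ref{ess-surj-full} and the supporting Koszul-construction results of Section \ref{sec3}, and the present theorem is a clean packaging step. The only point requiring a moment's care is the faithful-on-objects claim, and even that is routine once one notes that "$\iota(X_\bullet) \cong 0$" is equivalent to acyclicity of $X_\bullet$.
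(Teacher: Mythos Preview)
Your proposal is correct and follows essentially the same approach as the paper: invoke Proposition \ref{ess-surj-full} for fullness and essential surjectivity, observe that $\iota$ preserves homology so that $\iota(X_\bullet)\cong 0$ forces $X_\bullet$ acyclic and hence zero in $D^b(\T\cap\L)$, and then conclude via Lemma \ref{FinalEquivalence}. One small wording caution: $\T\cap\L$ need not be abelian, only an extension-closed subcategory of $\M(R)$, but your argument still goes through since $D^b(\T\cap\L)$ is formed by inverting quasi-isomorphisms computed in $\M(R)$.
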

\begin{proof}
By Proposition \ref{ess-surj-full}, $\iota$ is full and essentially surjective.  Furthermore, $\iota$ preserves homologies, thus $\iota(X_\bullet)$ is acyclic if and only if $X_\bullet$ is acyclic.  Therefore, $\iota$ is faithful on objects.  The equivalence then follows from Lemma \ref{FinalEquivalence}.
\end{proof}

We now come to the case when $\T = \bA$ where $\A$ is a resolving subcategory of
$\M(R)$.
\begin{lemma}\label{resolv-main}
There is an equivalence of categories induced by chain complex functors
$\iota' : D^b_{\L}(\A) \stackrel{\sim}{\rightarrow} D^b_{\L}(\bA)$.
\end{lemma}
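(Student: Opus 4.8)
The plan is to exhibit $\iota'$ as the functor on derived categories induced by the inclusion of chain complex categories $\ch^b_{\L}(\A)\hookrightarrow\ch^b_{\L}(\bA)$, and then to apply Lemma \ref{FinalEquivalence}. Since $\A\subseteq\bA$ and $\bA$ is thick, the inclusion is well-defined and exact, carrying quasi-isomorphisms to quasi-isomorphisms, so it descends to a triangulated functor $\iota':D^b_{\L}(\A)\to D^b_{\L}(\bA)$. Faithfulness on objects is immediate: the inclusion preserves homology, so $\iota'(X_\bullet)$ is acyclic precisely when $X_\bullet$ is. Thus, by Lemma \ref{FinalEquivalence}, it suffices to prove that $\iota'$ is essentially surjective and full.

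For essential surjectivity, I would use Lemma \ref{change-proj}(\ref{statement3}) (or rather its relative analogue): given $P_\bullet\in\ch^b_{\L}(\bA)$ with $\min(P_\bullet)\ge m$, build a bounded-below complex $F_\bullet$ of \emph{free} modules with a quasi-isomorphism $F_\bullet\to P_\bullet$ and $F_i=0$ for $i<m$. Truncating at the top: since $P_\bullet$ is bounded and every $P_i\in\bA$ has finite $\A$-dimension, choosing $N$ large enough the syzygy $\Omega^N(P_i)$ lands in $\A$ for every $i$; more precisely, a hard truncation of $F_\bullet$ far enough to the right produces a complex $G_\bullet$ of free modules together with a surjection onto $P_\bullet$ whose kernel, by the syzygy characterization $\bA=\{M\mid\Omega^{\gg0}M\in\A\}$ and the fact that $\A$ is resolving (closed under syzygies and extensions), lies in $\A$; assembling this replacement over all degrees and using that $\bA$ has the $2$-out-of-$3$ property (it is thick) together with $\L$ Serre, one obtains a complex $\tilde{P}_\bullet\in\ch^b_{\L}(\A)$ quasi-isomorphic to $P_\bullet$. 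Hence $\iota'(\tilde{P}_\bullet)\cong P_\bullet$ in $D^b_{\L}(\bA)$.

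For fullness, let $X_\bullet,Y_\bullet\in\ch^b_{\L}(\A)$ and let $f:X_\bullet\to Y_\bullet$ be a morphism in $D^b_{\L}(\bA)$, represented by a roof $X_\bullet\xleftarrow{q}W_\bullet\xrightarrow{h}Y_\bullet$ with $W_\bullet\in\ch^b_{\L}(\bA)$ and $q$ a quasi-isomorphism. By the essential-surjectivity argument applied to $W_\bullet$, there is a complex $W'_\bullet\in\ch^b_{\L}(\A)$ and a quasi-isomorphism $W'_\bullet\to W_\bullet$; composing, we get a roof $X_\bullet\leftarrow W'_\bullet\to Y_\bullet$ entirely inside $\ch^b_{\L}(\A)$, whose class in $D^b_{\L}(\A)$ maps to $f$ under $\iota'$. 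This shows $\iota'$ is full, completing the verification of the hypotheses of Lemma \ref{FinalEquivalence} and hence the proof.

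I expect the main obstacle to be the relative ``freeification'' in essential surjectivity: one must replace a bounded complex in $\bA$ by a complex in $\A$ with finite length (bounded) and still lying in $\ch^b_{\L}$, which requires carefully invoking the syzygy characterization of $\bA$, the resolving/thick closure properties of $\A$, and the $2$-out-of-$3$ property (Remark \ref{2-3-rem}) to control both membership in $\A$ and the homologies staying in $\L$ after truncation. Everything else is a routine roof-calculus argument combined with Lemma \ref{FinalEquivalence}.
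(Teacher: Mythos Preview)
Your approach is essentially the one the paper intends: the paper's proof is a two-line sketch pointing to Lemma~\ref{change-proj}(3) for essential surjectivity and calling full faithfulness ``standard,'' and your roof-replacement argument for fullness together with Lemma~\ref{FinalEquivalence} is a perfectly acceptable way to package that standard argument.

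One point to tighten in your essential-surjectivity step. You write that a \emph{hard} truncation of the free resolution $F_\bullet$ ``far enough to the right'' yields a complex $G_\bullet$ together with a surjection onto $P_\bullet$ whose kernel lies in $\A$. As stated this is not quite right: a hard truncation destroys the top homology, and the map $F_\bullet\to P_\bullet$ need not be a degreewise surjection, so there is no short exact sequence of complexes to analyze. What actually works is the argument in the proof of Lemma~\ref{change-proj}(\ref{statement2}): for each $P_i\in\bA$ choose a length-$n$ resolution $0\to \Omega^n P_i\to Q_{i,n}\to\cdots\to Q_{i,0}\to P_i\to 0$ with the $Q_{i,j}$ free, pick $n$ uniformly large enough that every $\Omega^n P_i\in\A$ (using the syzygy description of $\bA$), lift the differentials of $P_\bullet$ to a double complex, and take the total complex. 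That total complex is a \emph{bounded} complex of objects in $\A$ with a quasi-isomorphism to $P_\bullet$; since homology is preserved it lies in $\ch^b_{\L}(\A)$. This is presumably what you meant by ``assembling this replacement over all degrees,'' but the phrase ``hard truncation'' is misleading and should be replaced by this total-complex construction (or, equivalently, a smart truncation placing the appropriate cycle module at the top once you check it lands in $\A$).
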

\begin{proof}
The proof of essential surjectivity is similar to the proof in Lemma \ref{change-proj}(3). 
The proof that $\iota'$ fully faithful
is easy to obtain and standard.
\end{proof}

As a straightforward consequence of Lemma \ref{resolv-main} and Theorem \ref{thick-main}
when $\T = \bA$, we can now obtain the main result.
\begin{thm}\label{main}
There is an equivalence of categories
$D^b(\bA \cap \L) \simeq D^b_{\L}(\A)$.
\end{thm}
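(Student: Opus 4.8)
The plan is to obtain Theorem \ref{main} by simply composing the two equivalences already established in this section. Specifically, Theorem \ref{thick-main}, applied with $\T=\bA$ (which is a legitimate choice since $\bA$ is thick in $\M(R)$ by the lemma following Definition \ref{thick-def}, and $\L$ was fixed to satisfy condition (*) at the outset), yields an equivalence
\[
\iota : D^b(\bA \cap \L) \xrightarrow{\ \sim\ } D^b_{\L}(\bA).
\]
On the other hand, Lemma \ref{resolv-main} provides an equivalence
\[
\iota' : D^b_{\L}(\A) \xrightarrow{\ \sim\ } D^b_{\L}(\bA).
\]
Composing $\iota$ with a quasi-inverse of $\iota'$ gives the desired equivalence $D^b(\bA \cap \L) \simeq D^b_{\L}(\A)$.

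First I would state explicitly that $\bA$ is a thick subcategory of $\M(R)$, so that all the standing hypotheses of Section \ref{sec4} (in particular those of Theorem \ref{thick-main}) are met with $\T$ replaced by $\bA$. Then I would invoke Theorem \ref{thick-main} to get $\iota$ and Lemma \ref{resolv-main} to get $\iota'$, and note that both are equivalences, hence admit quasi-inverses. The composite $\iota'^{-1}\circ \iota$ (or, at the level that is most natural, the zig-zag $D^b(\bA\cap\L)\xrightarrow{\iota} D^b_{\L}(\bA)\xleftarrow{\iota'} D^b_{\L}(\A)$ of equivalences) furnishes the equivalence in the statement. One may also remark, for the applications in Section \ref{sec5}, that each of these functors is induced by an exact functor of the underlying chain-complex categories, so the equivalence is realized by a zig-zag of chain-level functors, which is what is needed for the $\K$-theory and Witt-group consequences.

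There is essentially no obstacle here: all the substantive work has been done in Proposition \ref{ess-surj-full}, Theorem \ref{thick-main}, and Lemma \ref{resolv-main}. The only points to check are bookkeeping ones, namely that $\bA$ qualifies as a thick subcategory and that the composition of the two equivalences is again an equivalence; both are immediate. If one wants to be careful about naturality at the chain level, the mild subtlety is that $\iota$ goes into $D^b_{\L}(\bA)$ while $\iota'$ comes out of $D^b_{\L}(\A)$ into the same target, so the equivalence $D^b(\bA\cap\L)\simeq D^b_{\L}(\A)$ is obtained as a composition in which one of the two functors is inverted; this costs nothing since equivalences of categories compose and invert freely, but it is worth recording for the reader who later wants the honest zig-zag of chain-complex functors.
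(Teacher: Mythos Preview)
Your proposal is correct and is exactly the paper's argument: the theorem is stated as a straightforward consequence of Theorem \ref{thick-main} applied with $\T=\bA$ together with Lemma \ref{resolv-main}, and the paper offers no further details. Your additional remarks about the zig-zag of chain-level functors are also in line with how the paper later uses the result in Section \ref{sec5}.
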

The following interesting corollary can now be obtained from the above result.
\begin{cor}\label{cor-main}
Suppose $R$ is coequidimensional and Cohen-Macaulay, then we have the equivalence
$$D^b(\FP \cap \Mfl) \simeq D^b_{fl}(\P(R)).$$
\end{cor}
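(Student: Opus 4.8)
The plan is to deduce Corollary \ref{cor-main} as a special case of Theorem \ref{main} by making the three appropriate choices of data. First I would set $\A = \P(R)$, the resolving subcategory of finitely generated projective modules; by Example \ref{res-ex}(1) this is indeed resolving, and by definition $\bA = \xbar{\P(R)} = \FP$, the category of modules of finite projective dimension. Next I would set $\L = \Mfl$, the Serre subcategory of finite length modules; this is Serre (Remark \ref{GabrielTheorem} with $V = \mspec(R)$), and the crucial point is that since $R$ is coequidimensional and Cohen-Macaulay, Example \ref{serre-ex}(1) guarantees that $\Mfl$ satisfies condition (*). Thus all the standing hypotheses fixed before Definition \ref{condition*} — $\A$ resolving, $\T = \bA$ thick (this is the lemma following Example \ref{res-ex}), and $\L$ Serre satisfying (*) — are met.

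With these identifications in place, Theorem \ref{main} directly yields an equivalence
$$D^b(\bA \cap \L) \simeq D^b_{\L}(\A).$$
The remaining work is purely a matter of translating the notation: $\bA \cap \L = \FP \cap \Mfl$, and $D^b_\L(\A) = D^b_{\Mfl}(\P(R))$, which by the defining convention for $D^b_{fl}$ in the introduction is exactly $D^b_{fl}(\P(R))$ — the full subcategory of $D^b(\P(R))$ of bounded complexes of projectives with finite length homology. Substituting gives
$$D^b(\FP \cap \Mfl) \simeq D^b_{fl}(\P(R)),$$
which is the claim.

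There is essentially no obstacle here beyond bookkeeping; the substance of the argument lives entirely in Theorem \ref{main} (hence in Theorem \ref{thick-main} and Lemma \ref{resolv-main}, hence ultimately in the Koszul construction of Section \ref{sec3}). The one point that genuinely requires the coequidimensional Cohen-Macaulay hypothesis — rather than being automatic — is the verification that $\Mfl$ satisfies condition (*), and this is precisely the content invoked via Example \ref{serre-ex}(1): given an ideal $I$ with $R/I$ of finite length, one needs a regular sequence $a_1,\dots,a_c \in I$ (here $c = \dm R$) with $R/(a_1,\dots,a_c)$ of finite length, which exists because in an equicodimensional Cohen-Macaulay ring a system of parameters inside an $\m$-primary ideal is a maximal regular sequence cutting out a finite length quotient. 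So the only thing worth spelling out is that reference; everything else is a direct appeal to Theorem \ref{main}.
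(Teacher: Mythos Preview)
Your proposal is correct and follows exactly the same approach as the paper: set $\A = \P(R)$ and $\L = \Mfl$, invoke Example \ref{serre-ex}(1) to verify condition~(*) under the coequidimensional Cohen--Macaulay hypothesis, and apply Theorem \ref{main}. The paper's own proof is the same two-line specialization, so your additional bookkeeping is simply an elaboration of what the paper leaves implicit.
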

\begin{proof}
When $R$ is Cohen-Macaulay and coequidimensional, the category $\L = \Mfl$ is a Serre subcategory satisfying condition (*). Hence, the equivalence follows from Theorem \ref{main} by taking $\L = \Mfl$ and $\A = \P(R)$.
\end{proof}
Furthermore, our result holds for any resolving subcategory in Example \ref{res-ex}
and Serre subcategory in Example \ref{serre-ex}. We explicitly state some important
cases. In the special case when $R$ is Cohen-Macaulay and equicodimensional, $\A = \M(R)$ and 
$\L = \Mfl$, we obtain the well-known equivalence
used in most d\'{e}vissage statements (refer \cite[1.15(Lemma,Ex. (b))]{K}).
\begin{cor}
$$D^b(\Mfl) \simeq D^b_{fl}(\M(R)).$$
\end{cor}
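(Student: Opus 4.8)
The plan is to derive the statement $D^b(\Mfl) \simeq D^b_{fl}(\M(R))$ as an immediate instance of Theorem \ref{main}, just as Corollary \ref{cor-main} was. First I would observe that $\A = \M(R)$ is a resolving subcategory of $\M(R)$ by Example \ref{res-ex}(2), and that its category of modules of finite $\A$-dimension is $\bA = \M(R)$ itself, since every module is trivially resolved by modules in $\M(R)$. Hence $\bA \cap \L = \M(R) \cap \Mfl = \Mfl$, and the left-hand side of Theorem \ref{main} becomes $D^b(\Mfl)$. For the right-hand side, $D^b_{\L}(\A) = D^b_{\Mfl}(\M(R))$ is by definition the full subcategory of $D^b(\M(R))$ of complexes whose homologies have finite length, which is exactly $D^b_{fl}(\M(R))$.

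The one genuine thing to check is that $\L = \Mfl$ satisfies condition (*) (Definition \ref{condition*}), since Theorem \ref{main} presupposes this for the fixed Serre subcategory $\L$. By Example \ref{serre-ex}(1), $\Mfl$ satisfies condition (*) whenever $R$ is equicodimensional and Cohen-Macaulay, which is the standing hypothesis of the corollary's preamble; alternatively one can invoke Example \ref{serre-ex}(3) with $c$ equal to the (constant) dimension, or simply check directly that if $R/I$ has finite length then $I$ is $\m$-primary for the relevant maximal ideals and a maximal regular sequence inside $I$ cuts out a finite length quotient. So the step is to record which item of Example \ref{serre-ex} applies and then quote Theorem \ref{main}.

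Therefore the proof is essentially a one-line citation: apply Theorem \ref{main} with $\A = \M(R)$ and $\L = \Mfl$, using that $\bA = \M(R)$ and that $\Mfl$ satisfies condition (*) under the running hypotheses, to get
\[
D^b(\Mfl) = D^b(\bA \cap \L) \simeq D^b_{\L}(\A) = D^b_{fl}(\M(R)).
\]
I do not expect any real obstacle here; the only subtlety is making sure the hypotheses needed for condition (*) are in force, which they are by the same equicodimensional Cohen-Macaulay assumption used in Corollary \ref{cor-main} (and which is the natural setting for the dévissage equivalence being recovered). If one wanted the statement with no hypotheses on $R$ at all, one would instead need a version of condition (*) for $\Mfl$ over an arbitrary Noetherian ring, but the cleanest route in this paper is to keep the standing hypothesis and cite Theorem \ref{main} directly.
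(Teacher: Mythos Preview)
Your proposal is correct and matches the paper's approach exactly: the corollary is stated as the special case of Theorem \ref{main} with $\A = \M(R)$ (so $\bA = \M(R)$) and $\L = \Mfl$, under the standing equicodimensional Cohen-Macaulay hypothesis that makes $\Mfl$ satisfy condition (*) via Example \ref{serre-ex}(1). The paper likewise remarks that the equivalence is in fact known without the Cohen-Macaulay assumption, just as you note at the end.
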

Note that this equivalence is known even without the assumption that $R$ is Cohen-Macaulay.
\begin{cor}
Let $R$ be Cohen-Macaulay.
Let $V$ be any set theoretic complete intersection in $\spec(R)$ and $c$ be any integer.
Let $\FP^c_V$ denote the category of modules with finite projective dimension supported
on $V$ and in codimension at least $c$. Let $D^c_V(\P(R))$ denote the derived category
with chain complexes of projective modules with homologies supported on $V$ and in
codimension at least $c$.
Then $$ D^b(\FP^c_V) \simeq D^c_V(\P(R)).$$
\end{cor}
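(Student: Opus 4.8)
The plan is to deduce this directly from Theorem \ref{main} by choosing the resolving subcategory and the Serre subcategory appropriately. Take $\A = \P(R)$, so that $\bA = \FP$ is the category of modules of finite projective dimension, and take
$$\L = \{\, M \in \M(R) \mid \codm(\supp(M)) \geq c,\ \supp(M) \subseteq V \,\}.$$
Since $V$ is specialization closed (indeed closed), $\L$ is a Serre subcategory of $\M(R)$ by Remark \ref{GabrielTheorem}. Because $R$ is Cohen-Macaulay and $V$ is a set theoretic complete intersection, $\L$ satisfies condition (*) by Example \ref{serre-ex}(2). Thus the hypotheses of Theorem \ref{main} are met for this pair $(\A,\L)$.

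It then remains only to match the two sides of the equivalence with the categories named in the statement. On the left, $\bA \cap \L = \FP \cap \L$ consists of the finite projective dimension modules supported on $V$ and in codimension at least $c$; by definition this is $\FP^c_V$. On the right, $D^b_{\L}(\A) = D^b_{\L}(\P(R))$ is, by the definition preceding Remark \ref{recall}, the full subcategory of $D^b(\P(R))$ of bounded complexes of finitely generated projective modules all of whose homology modules lie in $\L$, i.e.\ are supported on $V$ and in codimension at least $c$; this is precisely $D^c_V(\P(R))$. Substituting these identifications into the equivalence $D^b(\bA \cap \L) \simeq D^b_{\L}(\A)$ furnished by Theorem \ref{main} yields $D^b(\FP^c_V) \simeq D^c_V(\P(R))$.

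There is essentially no obstacle here beyond this bookkeeping: the entire content is already packaged in Theorem \ref{main}, and the only nontrivial input, that $\L$ satisfies condition (*), is recorded as Example \ref{serre-ex}(2). If one wished to spell the latter out, the mild subtlety is the following: given an ideal $I$ with $R/I \in \L$, one must produce a \emph{regular} sequence $(a_1,\dots,a_c) \in I$ of length exactly $c$ with $R/(a_1,\dots,a_c) \in \L$; here one uses a defining complete intersection $J = (b_1,\dots,b_r)$ with $V = V(J)$ together with the Cohen-Macaulay hypothesis (which controls $\codm$ and guarantees that a prime-avoiding choice inside $I + J$ of the right length can be taken $R$-regular). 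This is the one place where both the Cohen-Macaulay assumption on $R$ and the complete intersection structure of $V$ are genuinely used.
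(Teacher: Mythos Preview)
Your argument is correct and matches the paper's approach exactly: the corollary is stated without proof as an immediate instance of Theorem \ref{main} with $\A=\P(R)$ and $\L$ the Serre subcategory of Example \ref{serre-ex}(2), and your write-up simply makes this specialization explicit. The extra paragraph sketching why condition (*) holds is fine but unnecessary, since the paper treats Example \ref{serre-ex}(2) as already established.
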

Note that without $c$, the above result holds even without $R$ being Cohen-Macaulay.

The main theorem, Theorem \ref{main} is also related to an interesting corollary
of the oft-quoted Hopkins-Neeman theorem \cite{H,N2} for perfect complexes. 
Let $\L$ be any Serre subcategory of $\M(R)$. A consequence of the Hopkins-Neeman theorem 
is that $thick_{D^b(\M(R))}(\xbar{\P(R)} \cap \L) \simeq D^b_{\L}(\P(R))$ where $thick$
is the thick closure (note that here we use thick in the triangulated sense). We
generalize this as follows.
\begin{cor} \label{hop-nee}
Let $\L$ be a Serre subcategory satisfying condition (*). Let $\T$ be any thick subcategory
of $\M(R)$. Then the thick closure (in the triangulated sense) of $\T \cap \L$ in $D^b(\M(R))$
is $D^b_{\L}(\T)$ (after completion with respect to isomorphisms).
\end{cor}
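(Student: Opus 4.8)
The plan is to read the statement off from Theorem \ref{thick-main}. Let $j\colon D^b_\L(\T)\to D^b(\M(R))$ be the natural functor, namely the inclusion $D^b_\L(\T)\hookrightarrow D^b(\T)$ followed by the canonical functor $D^b(\T)\to D^b(\M(R))$, and let $\mathcal I\subseteq D^b(\M(R))$ be the isomorphism‑closed essential image of $j$; this is what ``$D^b_\L(\T)$ after completion with respect to isomorphisms'' means. Writing $\mathcal S:=thick_{D^b(\M(R))}(\T\cap\L)$, the task is to prove $\mathcal I=\mathcal S$.

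For $\mathcal I\subseteq\mathcal S$ I would argue as follows. By Theorem \ref{thick-main} every object of $D^b_\L(\T)$ is isomorphic to $\iota(P_\bullet)$ for some $P_\bullet\in\ch^b(\T\cap\L)$, and the image of $\iota(P_\bullet)$ under $j$ is the complex $P_\bullet$ itself. The stupid‑truncation subcomplexes $\sigma_{\le n}P_\bullet$ fit into exact triangles $\sigma_{\le n-1}P_\bullet\to\sigma_{\le n}P_\bullet\to T^{n}P_n\to T\sigma_{\le n-1}P_\bullet$ in $D^b(\M(R))$, so, $P_\bullet$ being bounded, it is a finite iterated cone of the stalk complexes $T^nP_n$ with $P_n\in\T\cap\L$; hence $P_\bullet\in\mathcal S$.

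For $\mathcal S\subseteq\mathcal I$ it suffices to check that $\mathcal I$ is a thick triangulated subcategory of $D^b(\M(R))$ containing $\T\cap\L$. It visibly contains $\T\cap\L$ and is closed under shifts. Closure under direct summands is easy: if $Q$ is a summand in $D^b(\M(R))$ of some $X\in\mathcal I$, then each $H_n(Q)$ is a summand of $H_n(X)\in\T\cap\L$, hence lies in $\T\cap\L$ (the resolving category $\T$ and the Serre category $\L$ are each closed under summands), and the canonical‑truncation triangles of $Q$ display $Q$ as a finite iterated cone of the stalk complexes $T^nH_n(Q)\in\T\cap\L\subseteq\mathcal I$, so $Q\in\mathcal I$ provided $\mathcal I$ is closed under cones. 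That last point — closure under cones, equivalently fullness of $j$ — is the only real content, and is the step I expect to be the main obstacle: a morphism $f$ of $D^b(\M(R))$ between objects of $\mathcal I$ must be liftable to a morphism of the triangulated category $D^b_\L(\T)$, for then the cone of the lift maps under the exact functor $j$ to a cone of $f$. To prove this I would rerun the roof‑diagram argument of Proposition \ref{ess-surj-full}: represent $f$ by a roof $P_\bullet\xleftarrow{q}W_\bullet\xrightarrow{t}Q_\bullet$ with $q$ a quasi‑isomorphism; since $W_\bullet$ then has homologies in $\L$ it lies in $\ch^b_\L(\M(R))$, exactly the setting of Theorem \ref{kos-lemma2} and Theorem \ref{kos-thm}, so the Koszul construction together with the truncations of Lemma \ref{change-proj} lets one replace $W_\bullet$ by a complex in $\ch^b(\T\cap\L)$ and thereby exhibit $f$, up to an isomorphism of cones, as induced by an honest chain map between complexes of $\ch^b(\T\cap\L)$, whose mapping cone has terms in $\T$ and homologies in $\L$, hence lies in $\mathcal I$. (When $\T$ has enough projectives admitting finite resolutions, e.g. $\T=\FP$, one can instead invoke the standard fact that $D^b(\T)\to D^b(\M(R))$ is fully faithful.) Once fullness of $j$ is in hand, $\mathcal I$ is thick and the two inclusions give $\mathcal I=\mathcal S$; faithfulness of $j$ follows from the same circle of ideas combined with Theorem \ref{thick-main}, which upgrades the conclusion to an equivalence $\mathcal S\simeq D^b_\L(\T)$ of triangulated categories.
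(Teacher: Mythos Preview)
Your strategy is sound, but the summand argument contains an error and your route to fullness is a needless detour. For the error: you assert $H_n(X)\in\T\cap\L$ for $X\in\mathcal I$, but this fails. Take $R$ Cohen--Macaulay local and not regular, $\T=\FP$, $\L=\Mfl$; set $M=R/(x_1,\dots,x_d)\in\T\cap\L$ for a maximal regular sequence, choose $f\colon M^n\to M$ with image $\m M$, and form the two-term complex $P_\bullet=(M^n\xrightarrow{f}M)\in\ch^b(\T\cap\L)$. Then $H_0(P_\bullet)=k$, which has infinite projective dimension, so $H_0(P_\bullet)\notin\T$. This gap is not fatal by itself---once $j$ is known to be full, Lemma~\ref{FinalEquivalence} gives faithfulness, and idempotent completeness of $D^b_\L(\T)$ then handles summands---but your proposal for fullness, rerunning the Koszul induction of Proposition~\ref{ess-surj-full} with the roof $W_\bullet$ now lying in $\ch^b(\M(R))$ rather than $\ch^b(\T)$, is sketchy and, more to the point, unnecessary.

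The observation you relegate to a parenthetical is the entire argument, and it holds for \emph{every} thick $\T$, not only $\FP$. Any thick subcategory of $\M(R)$ contains $\P(R)$, so $\T$ always has enough projectives; no finiteness of resolutions is required. The paper proceeds exactly this way: from the commutative square
\[
\xymatrix{
K^+_{\T}(\P(R)) \ar[r] \ar[d]^{\wr} & K^+(\P(R)) \ar[d]^{\wr} \\
D^+(\T) \ar[r] & D^+(\M(R))\\
}
\]
the top arrow is a full embedding, hence so is the bottom; restricting to bounded complexes shows that $D^b_\L(\T)$ sits as a thick subcategory of $D^b(\M(R))$. The chain
\[
\text{image}\bigl(D^b(\T\cap\L)\bigr)\ \subseteq\ thick_{D^b(\M(R))}(\T\cap\L)\ \subseteq\ D^b_\L(\T)
\]
then collapses by Theorem~\ref{thick-main}.
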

\begin{proof}
Note that there is a commutative square 
\[
\xymatrix{
K^+_{\T}(\P(R)) \ar[r] \ar[d]^{\wr} & K^+(\P(R)) \ar[d]^{\wr} \\
D^+(\T) \ar[r] & D^+(\M(R)).\\
}
\]
The top horizontal arrow is a full embedding, hence so is the bottom. Hence, restricting
to the bounded category, $D^b(\T)$ is a thick subcategory of $D^b(\M(R))$ and hence so is $D^b_{\L}(\T)$ (after completing them with respect to isomorphisms). However, up to completion with respect to isomorphisms, 
$$\text{image}(D^b(\T \cap \L)) \subseteq thick_{D^b(\M(R))}(\T \cap \L) \subseteq D^b_{\L}(\T).$$ But by Theorem \ref{thick-main}, we then get the required result.
\end{proof}

Finally we prove the theorem we promised at the beginning of the article.

\begin{thm}\label{thm-cm-iff}
Suppose $R$  is local.  Then $R$ is Cohen-Macaulay if and only if the following equivalence holds 
$$D^b(\FP \cap \Mfl) \simeq D^b_{fl}(\P(R)).$$
\end{thm}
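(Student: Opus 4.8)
The plan is to prove the two directions separately, with the forward direction being a direct application of the machinery already developed and the converse being the genuinely new content of this theorem.

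\textbf{Forward direction ($R$ Cohen-Macaulay $\Rightarrow$ equivalence).} First I would reduce to Corollary \ref{cor-main}. Since $R$ is local, it is automatically equicodimensional (there is only one maximal ideal, so the ``all maximal ideals have the same height'' condition is vacuous), hence $R$ is coequidimensional and Cohen-Macaulay. Then $\L = \Mfl$ satisfies condition (*) by Example \ref{serre-ex}(1): given an ideal $I$ with $R/I$ of finite length, i.e.\ $I$ is $\m$-primary, a system of parameters inside $I$ is a regular sequence $(a_1,\dots,a_c)$ (here $c = \dm R$) because $R$ is Cohen-Macaulay, and $R/(a_1,\dots,a_c)$ has finite length. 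Applying Theorem \ref{main} with $\A = \P(R)$ (so $\bA = \FP$) and $\L = \Mfl$ gives $D^b(\FP \cap \Mfl) \simeq D^b_{\Mfl}(\P(R)) = D^b_{fl}(\P(R))$, which is exactly the asserted equivalence.

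\textbf{Converse direction (equivalence $\Rightarrow$ $R$ Cohen-Macaulay).} This is the part flagged in the introduction as relying on the Hopkins-Neeman theorem, so I would argue by contrapositive: assume $R$ is \emph{not} Cohen-Macaulay and show the equivalence fails. By the new intersection theorem \cite{R}, a non-Cohen-Macaulay local ring admits no nonzero module of finite length and finite projective dimension, so $\FP \cap \Mfl = 0$ and hence $D^b(\FP \cap \Mfl) = 0$. It therefore suffices to show $D^b_{fl}(\P(R)) \neq 0$. For this I would invoke the Hopkins-Neeman classification \cite{H,N2}: the thick subcategories of the perfect complexes $D^b(\P(R))$ are in bijection with specialization-closed subsets of $\spec(R)$, and $D^b_{fl}(\P(R))$ corresponds to the subset $\mspec(R) = \{\m\}$ (the closed point). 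Concretely, the Koszul complex $\kos(a_1,\dots,a_d)$ on \emph{any} system of parameters $a_1,\dots,a_d$ for $\m$ is a nonzero bounded complex of free modules whose homology is annihilated by a power of $\m$, hence of finite length; thus it is a nonzero object of $D^b_{fl}(\P(R))$. (One must check this complex is not acyclic: its zeroth homology is $R/(a_1,\dots,a_d) \neq 0$.) So $D^b_{fl}(\P(R)) \neq 0 = D^b(\FP \cap \Mfl)$, and the two categories cannot be equivalent.

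\textbf{Main obstacle.} The only subtle point is making the converse airtight: one has to be sure that $D^b_{fl}(\P(R))$ is genuinely nonzero, and the cleanest justification is the explicit Koszul complex on a system of parameters rather than a black-box appeal to Hopkins-Neeman, though citing \cite{H,N2} for the conceptual statement about the lattice of thick subcategories being nontrivial is also fine. The forward direction is essentially bookkeeping: confirming that ``local'' supplies ``equicodimensional'' and that Cohen-Macaulayness is precisely what makes a system of parameters a regular sequence, so that condition (*) holds and Theorem \ref{main} applies. I would structure the write-up as: (1) local $\Rightarrow$ equicodimensional, so Corollary \ref{cor-main} gives one direction; (2) for the converse, $R$ non-CM $\Rightarrow$ $\FP\cap\Mfl = 0$ by \cite{R} while a parameter Koszul complex witnesses $D^b_{fl}(\P(R)) \neq 0$.
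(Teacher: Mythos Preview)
Your proof is correct and follows essentially the same approach as the paper: the forward direction applies Theorem \ref{main} (via Corollary \ref{cor-main}, noting that local implies equicodimensional), and the converse combines the new intersection theorem with the nonvanishing of $D^b_{fl}(\P(R))$. The one difference is that the paper relies purely on Hopkins--Neeman to conclude $D^b_{fl}(\P(R)) \neq 0$ (arguing that the thick subcategory corresponding to $\{\m\}$ differs from the one corresponding to $\emptyset$), whereas you additionally supply the Koszul complex on a system of parameters as an explicit nonzero witness; your concrete construction is more elementary and self-contained, while the paper's citation emphasizes the conceptual connection to the classification of thick subcategories.
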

\begin{proof}
When $R$ is Cohen-Macaulay, the equivalence follows from Theorem \ref{main} by taking $\L = \Mfl$ and $\A = \P(R)$.  

Now suppose $R$ is not Cohen-Macaulay. The new intersection theorem, \cite{R}, asserts that such a ring $R$ never admits a finite length, finite projective dimension module.
Thus $D^b(\FP \cap \Mfl) = 0$. However, the Hopkins-Neeman theorem  \cite{H,N2} states that the thick subcategories of $D^b(\P(R))$ are in bijective correspondence with specialization closed subsets of $\spec(R)$ and that this bijection is given by taking the supports of objects in $D^b(\P(R))$.  Letting $m$ be the maximal ideal of $R$, since the specialization closed set $\{m\}$ differs from the specialization closed subset $\emptyset$,  the Hopkins-Neeman theorem implies that the subcategory of $D^b(\P(R))$ supported on $\{m\}$, which is $D^b_{fl}(\FP)$, differs from that supported on $\emptyset$, which is $0$.  Hence $D^b_{fl}(\FP)$ cannot be $0$ and so the equivalence fails.
\end{proof}

\section{Results on Homological Functors}
 \label{sec5}
Now that we have proved the equivalence of the two categories, we can compare various theories of invariants for derived equivalences. We restrict our attention to $\K$-theory and triangular Witt and Grothendieck-Witt groups. 

\section*{$\K$-theory comparisons and results}
Since $K$-theoretic invariants need not always be preserved by equivalences of derived
categories (\cite{S1}), we will need to view the categories above with some more structure.
While the original and several other articles (\cite{WF},\cite{TT},\cite{N1},\cite{B1},\cite{B2},\cite{WC}, \cite{S2},\cite{S3}) serve as good references for this part, we will refer to the articles (\cite{S4}, \cite{T}) for the terminology and results.

The categories $\ch^b_{\L}(\A), \ch^b(\T \cap \L)$ and $\ch^b_{\L}(\T)$ are strongly
pretriangulated $dg$-categories and the natural functors are functors of such categories.
In particular, with the usual choices of weak equivalences as quasi-isomorphisms, these
are all complicial exact categories with weak equivalences, and the natural functors
preserve weak equivalences. Assume, as usual, that $\L$ also satisfies condition (*).

Let $\K$ be the nonconnective K-theory spectrum. Applying Theorem \ref{thick-main} and 
\cite[Theorem 3.2.29]{S4}, we get that $\iota$ induces homotopy equivalences of
$\K$-theory spectra. Similarly, applying Lemma \ref{resolv-main} and \cite[Theorem 3.2.29]{S4},
 we get that $\iota'$ induces homotopy equivalences of $\K$-theory spectra.

Putting these together and further using \cite[3.2.30]{S4}, we obtain
\begin{thm}\label{k-th}
The spectra $\K(\bA \cap \L)$ and $\K(D^b_{\L}(\A))$ are homotopy equivalent. Hence
$$\K_i(\bA \cap \L) \simeq \K_i(D^b_{\L}(\A)) \quad \forall i \in \Z.$$
\end{thm}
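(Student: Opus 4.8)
The plan is to assemble the claimed homotopy equivalence from the two chain-level equivalences already established, together with the invariance of nonconnective $\K$-theory under derived equivalence. Throughout I take $\T = \bA$, so that $\T \cap \L = \bA \cap \L$. First I would record the two relevant functors of chain complexes: the inclusion $\ch^b(\bA \cap \L) \hookrightarrow \ch^b_\L(\bA)$ inducing $\iota$ (Theorem \ref{thick-main}), and the functor $\ch^b_\L(\A) \to \ch^b_\L(\bA)$ inducing $\iota'$ (Lemma \ref{resolv-main}). As recorded above, both are exact functors of complicial exact categories with weak equivalences, each preserving quasi-isomorphisms. By Theorem \ref{thick-main} the first induces a derived equivalence $D^b(\bA \cap \L) \xrightarrow{\sim} D^b_\L(\bA)$, and by Lemma \ref{resolv-main} the second induces a derived equivalence $D^b_\L(\A) \xrightarrow{\sim} D^b_\L(\bA)$.

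Next I would feed these into \cite[Theorem 3.2.29]{S4}: an exact functor of complicial exact categories with weak equivalences that becomes an equivalence on passing to derived categories induces a homotopy equivalence of nonconnective $\K$-theory spectra. Applying it to each of the two functors yields a zigzag of homotopy equivalences
$$\K\big(\ch^b(\bA \cap \L)\big) \xrightarrow{\ \sim\ } \K\big(\ch^b_\L(\bA)\big) \xleftarrow{\ \sim\ } \K\big(\ch^b_\L(\A)\big),$$
and hence $\K(\ch^b(\bA \cap \L)) \simeq \K(\ch^b_\L(\A))$. To conclude I would identify the two outer terms with the spectra in the statement. The category $\bA \cap \L$ is an exact category: it is closed under extensions in $\M(R)$ by Remark \ref{2-3-rem}, and closed under direct summands (since $\bA$ is thick and $\L$ is Serre), hence idempotent complete; so the Gillet--Waldhausen theorem \cite[3.2.30]{S4} gives $\K(\bA \cap \L) \xrightarrow{\sim} \K(\ch^b(\bA \cap \L))$. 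On the other side, $\K(\ch^b_\L(\A))$ is by construction the $\K$-theory spectrum $\K(D^b_\L(\A))$ attached to the complicial model $(\ch^b_\L(\A), \text{quasi-isos})$, since $D^b_\L(\A)$ is the localization of $\ch^b_\L(\A)$ at quasi-isomorphisms. Stringing the equivalences together gives $\K(\bA \cap \L) \simeq \K(D^b_\L(\A))$, and passing to homotopy groups gives $\K_i(\bA \cap \L) \cong \K_i(D^b_\L(\A))$ for all $i \in \Z$.

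The substantive content has already been isolated into Theorem \ref{thick-main} and Lemma \ref{resolv-main}, so at this stage there is no genuine obstacle; the points requiring care are purely formal, namely verifying that the categories in play are bona fide complicial exact categories with weak equivalences so that \cite[Theorem 3.2.29]{S4} applies (which is done above), and carrying out the routine but necessary Gillet--Waldhausen identification of the chain-complex $\K$-theories with the $\K$-theory of the underlying exact and triangulated categories.
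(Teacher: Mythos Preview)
Your proposal is correct and follows essentially the same approach as the paper: combine the derived equivalences from Theorem \ref{thick-main} and Lemma \ref{resolv-main} with \cite[Theorem 3.2.29]{S4} to obtain a zigzag of homotopy equivalences, then invoke \cite[3.2.30]{S4} (Gillet--Waldhausen) to identify $\K(\bA\cap\L)$ with the $\K$-theory of its bounded chain complexes. Your write-up is in fact slightly more explicit than the paper's, spelling out the idempotent completeness of $\bA\cap\L$ and the complicial model for $D^b_\L(\A)$.
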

Once again, this result holds for any resolving subcategory in Example \ref{res-ex} and
Serre subcategory in Example \ref{serre-ex}. We list the most important corollary.
\begin{cor}\label{k-thp}
Let $R$ be Cohen-Macaulay and equicodimensional. Then there is a homotopy equivalence between
$\K(\FP \cap \Mfl)$ and $\K(D^b_{fl}(\P(R)))$. Hence
$$\K_i(\FP \cap \Mfl) \simeq \K_i(D^b_{fl}(\P(R)))$$
\end{cor}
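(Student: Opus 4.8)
The plan is to obtain the statement as the special case of Theorem \ref{k-th} corresponding to the choice $\A = \P(R)$ and $\L = \Mfl$. The only hypothesis of Theorem \ref{k-th} that needs checking is that, under the standing assumptions on $R$, the Serre subcategory $\Mfl$ satisfies condition (*); this is exactly Example \ref{serre-ex}(1). For completeness I would recall why: by Remark \ref{GabrielTheorem}, $\Mfl$ is the Serre subcategory attached to $V = \mspec(R)$, and if $I$ is an ideal with $R/I \in \Mfl$ then $\dm(R/I) = 0$, so $\text{ht}(I)$ equals the common height $d = \dm R$ of the maximal ideals (this is where equicodimensionality enters). Since $R$ is Cohen-Macaulay, $I$ contains an $R$-regular sequence $a_1,\dots,a_d$ of length $d$, and a short localization argument shows that $\supp(R/(a_1,\dots,a_d))$ meets only maximal ideals, at each of which $R/(a_1,\dots,a_d)$ has finite length; hence $R/(a_1,\dots,a_d) \in \Mfl$, which verifies condition (*) with $c = d$.

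Next I would unwind the notation of Theorem \ref{k-th} for this $\A$ and $\L$. Since $\A = \P(R)$, the category $\bA$ of modules of finite $\A$-dimension is precisely $\FP$, the category of modules of finite projective dimension, so $\bA \cap \L = \FP \cap \Mfl$. On the other side, by definition $D^b_{\L}(\A) = D^b_{\Mfl}(\P(R))$ is the full subcategory of $D^b(\P(R))$ consisting of complexes with finite length homologies, that is, $D^b_{fl}(\P(R))$.

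Feeding these identifications into Theorem \ref{k-th} then produces directly a homotopy equivalence of nonconnective $\K$-theory spectra $\K(\FP \cap \Mfl) \simeq \K(D^b_{fl}(\P(R)))$, and passing to homotopy groups gives $\K_i(\FP \cap \Mfl) \cong \K_i(D^b_{fl}(\P(R)))$ for all $i \in \Z$, as claimed. There is no real obstacle here: all the work is carried by Theorem \ref{k-th} (and behind it, by the derived equivalence of Theorem \ref{main} together with the invariance of nonconnective $\K$-theory under equivalences of complicial exact categories with weak equivalences), so the only thing to be careful about is the bookkeeping verification of condition (*) above.
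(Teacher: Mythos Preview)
Your proposal is correct and follows exactly the approach the paper takes: the corollary is stated without proof in the paper, immediately after the sentence ``Once again, this result holds for any resolving subcategory in Example \ref{res-ex} and Serre subcategory in Example \ref{serre-ex},'' so it is simply the specialization of Theorem \ref{k-th} to $\A = \P(R)$ and $\L = \Mfl$, with condition (*) supplied by Example \ref{serre-ex}(1). Your additional paragraph unpacking why $\Mfl$ satisfies condition (*) is more detail than the paper gives, but it is accurate and harmless.
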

As mentioned in Section \ref{sec1}, special cases of Corollary \ref{k-thp} were known earlier.

When $R$ is Cohen-Macaulay and equicodimensional of dimension $d$, the special case of $\A = \M(R)$ and
$c = d$ in Theorem \ref{k-th} gives us the well-known equivalence for coherent $\K$-theory.

\begin{rem}\label{spectral-kth-rmk}
Let $X$ be a (topologically) noetherian scheme with a bounded generalized dimension
function as in \cite{B3} . Then coniveau and niveau spectral sequences are defined in
\cite[Theorem 1, Theorem 2]{B3} converging to the $\K$-groups of $X$. The $q^{th}$
row on the $E_1$ page consists of unaugmented Gersten-like complexes with terms 
$$\bigoplus_{x \in X^{(p)}} \K_{-p-q}(\O_{X,x} \text{ on } {x}) \qquad \text{and} \qquad
\bigoplus_{x \in X_{(-p)}} \K_{-p-q}(\O_{X,x} \text{ on } {x})$$ respectively.
Further, there is an augmented weak Gersten complex for the usual codimension and dimension
functions, as defined in \cite{B3} . When $X$ is regular, Quillen's d\'{e}vissage
theorem can be applied to rewrite these terms as the K-theories of the residue fields of the points. However, when $X$ is not regular, the theorem does not apply and hence
the terms remain as abstract K-groups of derived categories supported at the points.

Now under the further assumption that all the local rings $\O_{X,x}$ are Cohen-Macaulay
(i.e. $X$ is Cohen-Macaulay), we can apply Theorem \ref{k-thp} and rewrite these spectral sequences
in terms of the $\K$-groups of the category of finite length, finite projective dimension
modules over the local rings at the points. Thus the computation of global $\K$-groups
can now be reduced to computing $\K$-groups of the category $\FP \cap \Mfl(R)$ for a 
Cohen-Macaulay local ring $R$.
\end{rem}
To summarize, we obtain :
\begin{thm}\label{spectral-kth-thm}
For a Cohen-Macaulay scheme $X$ of dimension $d$, we have spectral sequences :
$$E_1^{p,q} = \bigoplus_{x \in X^{(p)}} \K_{-p-q} (\xbar{\P(\O_{X,x})} \cap \Mfl(\O_{X,x})) 
\stackrel{p+q=n}{\Longrightarrow} \K_{-n}(X)    \qquad \text{ and }$$
$$E_1^{p,q} = \bigoplus_{x \in X_{(p)}} \K_{-p-q} (\xbar{\P(\O_{X,x})} \cap \Mfl(\O_{X,x}))
\stackrel{p+q=n}{\Longrightarrow} \K_{-n}(X)$$
and augmented weak Gersten complexes for each $q \in \Z$ :
\begin{align*}
& \K_q(X) \rightarrow \bigoplus_{x \in X^{(0)}} \K_q(\xbar{P(\O_{X,x})} \cap \Mfl(\O_{X,x})) \rightarrow \\
& \qquad \bigoplus_{x \in X^{(1)}} \K_{q-1}(\xbar{\P(\O_{X,x})} \cap \Mfl(\O_{X,x}))
\rightarrow \ldots \rightarrow
\bigoplus_{x \in X^{(d)}} \K_{q-d}(\xbar{\P(\O_{X,x})} \cap \Mfl(\O_{X,x})).\\
\end{align*}
\end{thm}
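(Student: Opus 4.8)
The strategy is to take the spectral sequences and weak Gersten complexes of Balmer--Walter--Brasca \cite{B3} and perform a term-by-term substitution using the homotopy equivalence established in Corollary \ref{k-thp}. First I would recall from Remark \ref{spectral-kth-rmk} that, for a topologically Noetherian scheme $X$ equipped with the usual codimension and dimension functions as bounded generalized dimension functions in the sense of \cite{B3}, the coniveau and niveau spectral sequences are constructed with $E_1$-pages whose entries in the appropriate row are $\bigoplus_{x \in X^{(p)}} \K_{-p-q}(\O_{X,x} \text{ on } \{x\})$ and $\bigoplus_{x \in X_{(p)}} \K_{-p-q}(\O_{X,x} \text{ on } \{x\})$ respectively, together with the augmented weak Gersten complexes; these converge to $\K_{-n}(X)$. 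So the proof is not about re-deriving the spectral sequences but about identifying the terms $\K_i(D^b(\O_{X,x} \text{ on } \{x\}))$ with $\K_i$ of an honest module category.

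Next I would fix a point $x \in X$ with local ring $R = \O_{X,x}$ and note that $D^b(R \text{ on } \{x\})$ is precisely the full subcategory $D^b_{fl}(\P(R))$ of $D^b(\P(R))$ consisting of perfect complexes with finite length homology (the support at the closed point of $\spec R$). By the Cohen-Macaulay hypothesis on $X$, every such $R$ is Cohen-Macaulay local, hence equicodimensional (a local ring is automatically equicodimensional, or one simply invokes that $\L = \Mfl(R)$ satisfies condition (*) over a Cohen-Macaulay local ring, as in Example \ref{serre-ex}(1) and the proof of Corollary \ref{cor-main}). Thus Corollary \ref{k-thp} applies and yields a homotopy equivalence $\K(\xbar{\P(R)} \cap \Mfl(R)) \simeq \K(D^b_{fl}(\P(R)))$, hence isomorphisms $\K_i(\xbar{\P(\O_{X,x})} \cap \Mfl(\O_{X,x})) \cong \K_i(D^b(\O_{X,x} \text{ on } \{x\}))$ for all $i \in \Z$. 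One has to check that this identification is natural in $x$ so that it is compatible with the differentials (the transfer/residue maps) in the Gersten-type complexes; this follows because the equivalence in Corollary \ref{k-thp}, and hence in Theorem \ref{k-th}, is induced by explicit chain-complex-level functors (a zigzag of functors of complicial exact categories with weak equivalences, via Lemma \ref{resolv-main} and Theorem \ref{thick-main}), and such functors commute with the localization and filtration data used to build the spectral sequences.

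Finally, substituting the isomorphic terms into the $E_1$-pages and into the augmented weak Gersten complexes of \cite{B3} produces exactly the spectral sequences and complexes in the statement, with the same differentials and the same abutment $\K_{-n}(X)$. The main obstacle is the naturality/compatibility point: one must be sure that replacing each abstract term $\K_i(D^b(\O_{X,x} \text{ on } \{x\}))$ by $\K_i(\xbar{\P(\O_{X,x})} \cap \Mfl(\O_{X,x}))$ respects the morphisms of the coniveau/niveau exact couples, i.e. that the equivalence is sufficiently functorial in $x$. Granting the chain-level nature of the equivalences (which is emphasized throughout Section \ref{sec4} and recalled before Theorem \ref{k-th}), this is routine, and the theorem follows.
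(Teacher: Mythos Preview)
Your proposal is correct and follows essentially the same route as the paper: the theorem is obtained by taking Balmer's spectral sequences and weak Gersten complexes from \cite{B3} (as recalled in Remark \ref{spectral-kth-rmk}) and substituting term-by-term via Corollary \ref{k-thp}. If anything, you are more explicit than the paper about the naturality of the identification with respect to the transfer maps, which the paper leaves implicit.
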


\section*{Witt and Grothendieck-Witt group comparisons and results}
Let $R$ be an equicodimensional Cohen-Macaulay ring of dimension $d$. We now consider
the situation where the category $\A$ is a duality-closed thick subcategory of $\G_C$,
the category arising from a semidualizing module $C$ as defined in Section \ref{semidual}
with duality given by $\hm(\_ ,C)$. Let $\L = \Mfl$. We assume further that $2$ is
invertible in the ring $R$.

There is a duality on the category $\bA \cap \L$ given by $\ext_R^d(\_ ,C)$ which
induces a duality on $D^b(\bA \cap \L)$. Similarly, there is a duality on
$D^b_{\L}(\A)$ given by $\hm_R(\_ ,C)$ (or $\da$) respectively.

The arguments in either of \cite[Lemma 6.4]{BW}, \cite{Gi} or \cite{MS} go through
with minimal modifications showing that these are indeed dualities and that they
are preserved by the "resolution functor", the composite functor $\iota \circ \iota'^{-1}$
from $D^b(\bA \cap \L)$ to $D^b_{\L}(\A)$. 

A direct application of \cite[Lemma 4.1(c)]{BW} 
now yields
\begin{thm}\label{witt}
There is an isomorphism of triangular Witt groups
$$W(\bA \cap \L) \stackrel{\sim}{\rightarrow} W^0(D^b(\bA \cap \L)) \stackrel{\sim}{\rightarrow} W^d(D^b_{\L}(\A)).$$
\end{thm}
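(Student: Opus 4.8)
The plan is to obtain the two arrows from three inputs: the comparison of the Witt group of an exact category with duality with the degree-zero Witt group of its bounded derived category (available since $2 \in R^{\times}$); the fact, asserted just above, that the resolution functor $\rho := \iota \circ \iota'^{-1} : D^b(\bA \cap \L) \to D^b_{\L}(\A)$ is a duality-preserving triangulated equivalence; and the functoriality packaged in \cite[Lemma 4.1(c)]{BW}, which converts such an equivalence into an isomorphism of (shifted) Witt groups. The one genuinely substantive point is the bookkeeping of degrees, and this is exactly what produces the shift from $W^0$ to $W^d$.

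First I would verify that $\ext^d_R(\_,C)$ is a well-defined exact duality on the exact category $\bA \cap \L$ (which is extension-closed and summand-closed in $\M(R)$, since $\bA$ is thick and $\L$ is Serre). For nonzero $M \in \bA \cap \L$ we have $\operatorname{depth}_R M = 0$, hence $\operatorname{pd}_R M = d$ by Auslander--Buchsbaum, so $\ext^{>d}_R(M,C) = 0$; and localizing at the ($\m$-primary) annihilator of $M$ and using that a semidualizing module over a Cohen-Macaulay ring is maximal Cohen-Macaulay gives $\operatorname{grade}(\operatorname{Ann} M, C) = d$, whence $\ext^{<d}_R(M,C) = 0$. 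Therefore $\mathbf{R}\hm_R(M,C) \simeq T^{-d}\ext^d_R(M,C)$, with $\ext^d_R(M,C)$ again of finite length and finite projective dimension, hence in $\bA \cap \L$. Exactness of $\ext^d_R(\_,C)$ on $\bA \cap \L$ and the biduality isomorphism $\operatorname{id} \iso \ext^d_R(\ext^d_R(\_,C),C)$ then follow as in \cite[Lemma 6.4]{BW}, \cite{Gi}, \cite{MS}. The exact-category-to-derived-category comparison of Witt groups then gives the first isomorphism $W(\bA \cap \L) \iso W^0(D^b(\bA \cap \L))$.

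For the second isomorphism I would recall that $\rho = \iota \circ \iota'^{-1}$ is a triangulated equivalence by Theorem \ref{thick-main} together with Lemma \ref{resolv-main} (equivalently, Theorem \ref{main}). The key point is that $\rho$ intertwines the derived dual: $\rho(\mathbf{R}\hm_R(X,C)) \cong \hm_R(\rho(X),C)$, where on the right-hand side $\mathbf{R}\hm_R = \hm_R$ because every module in $\A \subseteq \G_C$ is totally $C$-reflexive, so $\hm_R(\_,C)$ is already exact on $\ch^b_{\L}(\A)$. Combined with $\mathbf{R}\hm_R(\_,C) \simeq T^{-d}\circ \ext^d_R(\_,C)$ as endofunctors of $D^b(\bA \cap \L)$ (from the previous paragraph), this says that $\rho$ carries the degree-zero duality $\ext^d_R(\_,C)$ on $D^b(\bA \cap \L)$ to the $d$-shifted duality $T^d\circ\hm_R(\_,C)$ on $D^b_{\L}(\A)$; that is, $\rho$ is an equivalence of triangulated categories with duality $(D^b(\bA\cap\L),\ \ext^d_R(\_,C)) \iso (D^b_{\L}(\A),\ T^d\hm_R(\_,C))$. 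Since $W^d(D^b_{\L}(\A))$ is by definition $W^0$ of $D^b_{\L}(\A)$ equipped with the $d$-fold shifted duality, \cite[Lemma 4.1(c)]{BW} yields $W^0(D^b(\bA \cap \L)) \iso W^d(D^b_{\L}(\A))$; composing with the first isomorphism proves the theorem.

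The main obstacle, and the step I would spend the most care on, is verifying that $\rho$ is duality-preserving in the full structured sense: one must exhibit a natural isomorphism $\rho(\ext^d_R(X,C)) \iso T^d\hm_R(\rho(X),C)$ and check its coherence against the biduality transformations and the sign $(-1)^{d(d+1)/2}$ built into the definition of $W^d$. This forces one to trace $\rho$ explicitly through the chain-level functors defining $\iota$ and $\iota'$ and compare it, with signs, against the honest derived functor $\mathbf{R}\hm_R(\_,C)$ --- precisely the content imported from \cite[Lemma 6.4]{BW}, \cite{Gi}, \cite{MS}, and the reason the phrase ``preserved by the resolution functor'' above must be read up to the shift by $d$. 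A secondary, purely formal point is reconciling $W^{-d}$ with $W^d$ via the $4$-periodicity of shifted Witt groups and the standard sign conventions, together with the observation that every nonzero finite-length module in $\bA \cap \L$ has $\A$-dimension exactly $d$ (forced by $\operatorname{pd} = d$ and $\A \supseteq \P(R)$), so that the shift is $d$ on the nose.
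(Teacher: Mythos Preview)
Your strategy is exactly the paper's: invoke the exact-to-derived comparison for the first arrow, then use that $\rho=\iota\circ\iota'^{-1}$ is a duality-preserving triangulated equivalence (via the arguments of \cite[Lemma 6.4]{BW}, \cite{Gi}, \cite{MS}) together with \cite[Lemma 4.1(c)]{BW} for the second, with the shift by $d$ coming from $\mathbf{R}\hm_R(\_,C)\simeq T^{-d}\ext^d_R(\_,C)$ on $\bA\cap\L$.

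There is one slip in your details. You argue $\ext^{\ne d}_R(M,C)=0$ by asserting $\operatorname{pd}_R M=d$ via Auslander--Buchsbaum, and later that $\ext^d_R(M,C)$ has ``finite projective dimension''. But nothing in the setup forces $M\in\bA$ to have finite projective dimension: $\A$ is only assumed to be a duality-closed thick subcategory of $\G_C$, so $\bA\subseteq\xbar{\G_C}$, and e.g.\ for $\A=\G_R$ over a non-regular Gorenstein local ring one has $\bA=\M(R)$, which contains finite-length modules of infinite projective dimension. The fix is exactly what Lemma \ref{cdim} and the cited references supply: use the Auslander--Bridger formula $\dim_{\G_C}M+\operatorname{depth}M=\operatorname{depth}R$ to get $\dim_{\G_C}M=d$, whence $\ext^{>d}_R(M,C)=0$ by Lemma \ref{cdim}, and combine with your grade argument for the vanishing below $d$. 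Likewise, $\ext^d_R(M,C)$ lands in $\bA\cap\L$ (not in $\xbar{\P(R)}\cap\L$), which is what you actually need. With this correction your argument goes through and coincides with the paper's.
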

The case $\A = \P(R)$ results in the following new corollary.
\begin{cor}\label{wittp}
When $R$ is equicodimensional, there is an isomorphism of triangular Witt groups
$$W(\FP \cap \Mfl) \stackrel{\sim}{\rightarrow} W^d(D^b_{fl}(\P(R)))$$
\end{cor}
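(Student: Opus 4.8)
The plan is to deduce this statement directly from Theorem \ref{witt} by specializing to $\A = \P(R)$, $\L = \Mfl$, and the semidualizing module $C = R$. With these choices one has $\bA = \xbar{\P(R)} = \FP$, hence $\bA \cap \L = \FP \cap \Mfl$ and $D^b_{\L}(\A) = D^b_{\Mfl}(\P(R)) = D^b_{fl}(\P(R))$; the duality $\ext^d_R(-,C)$ on $\bA \cap \L$ becomes the grade-$d$ duality $\ext^d_R(-,R)$ on finite length, finite projective dimension modules (exactly the duality discussed in the paragraph preceding Theorem \ref{witt}, and an honest exact duality since over the $d$-dimensional equicodimensional Cohen--Macaulay ring $R$ such a module is perfect of grade $d$), while the duality $\hm_R(-,C)$ on $D^b_{\L}(\A)$ is the ordinary $R$-dual. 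Provided $\P(R)$ is a legitimate choice for $\A$ in the set-up preceding Theorem \ref{witt}, that theorem then supplies the composite isomorphism $W(\FP \cap \Mfl) \iso W^0(D^b(\FP \cap \Mfl)) \iso W^d(D^b_{fl}(\P(R)))$, which is the assertion.

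Hence the only thing needing proof is that $\P(R)$ is a duality-closed thick subcategory of $\G_R$ (the category $\G_C$ for $C = R$, i.e.\ the totally reflexive modules). Resolvingness of $\P(R)$ is Example \ref{res-ex}(1); every finitely generated projective module is totally reflexive, so $\P(R) \subseteq \G_R$; and $\P(R)$ is closed under $\hm_R(-,R)$ because $\hm_R(P,R)$ is projective when $P$ is. For thickness inside $\G_R$, let $0 \to M'' \to M \to M' \to 0$ be exact with all three terms in $\G_R$ and $M'', M \in \P(R)$; the sequence gives $\mathrm{pd}_R M' \le 1$, and a module with finite projective dimension and Gorenstein dimension zero is projective (its projective dimension equals its Gorenstein dimension), so $M' \in \P(R)$. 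Note that the choice $C = R$ is forced here: for the canonical module $C = D$ one has $\G_D = MCM(R)$, but $\P(R)$ is not closed under $\hm_R(-,D)$.

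I do not expect a genuine obstacle. Everything of substance already sits inside Theorem \ref{witt} --- the derived equivalence (Theorem \ref{main}, via Lemma \ref{resolv-main} and Theorem \ref{thick-main}), the fact that $\ext^d_R(-,C)$ and $\hm_R(-,C)$ are dualities compatible with the resolution functor $\iota \circ \iota'^{-1}$, and the identification \cite[Lemma 4.1(c)]{BW} of the Witt group of an exact category with duality with the shifted derived Witt group. Corollary \ref{wittp} is a pure specialization, and the only computation left is the short check in the preceding paragraph.
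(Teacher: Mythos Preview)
Your proposal is correct and matches the paper's approach: the corollary is obtained simply by specializing Theorem \ref{witt} to $\A = \P(R)$, and the paper says nothing more than ``The case $\A = \P(R)$ results in the following new corollary.'' Your additional verification that $\P(R)$ is a duality-closed thick subcategory of $\G_R$ (and that the choice $C = R$ is forced) fills in a detail the paper leaves implicit.
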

The special case of $R$ being equicodimensional and $C = D$ a dualizing module (or complex)
gives us the result in the well-known coherent case \cite{Gi} .

\begin{rem} \label{gw}
Similarly, a direct application of \cite[Theorem 2.1]{WC} yields the same results for
Grothendieck-Witt groups as for Witt groups above.
\end{rem}

In \cite{MS}, the authors define a new Witt group for exact subcategories of triangulated categories closed under duality. They further prove another form of d\'{e}vissage for
triangular Witt groups, namely
\[
\xymatrixcolsep{3pc}\xymatrix{
W(\FP \cap \Mfl) \ar[r]^-{\sim}_-{\alpha} &  W(D^b_{\sFP \cap \Mfl}(\FP \cap \Mfl)) \ar[r]^-{\sim}_-{\alpha'} \ar[d]^-{\wr}_-{\beta} & W(D^b(\FP \cap \Mfl)) \\
& W^d(D^b_{fl}(\P(R)))) & \\
}
\]
With our notations as above, we can now generalize and improve upon this picture to obtain
the following theorem.
\begin{thm}\label{witt-comp}
There are natural isomorphisms of Witt groups :
\[
\xymatrixcolsep{3pc}\xymatrix{
W(\bA \cap \L) \ar[r]^-{\sim}_-{\alpha} &  W(D^b_{\sbA \cap \L}(\bA \cap \L)) \ar[r]^-{\sim}_-{\alpha'}
\ar[d]^-{\wr}_-{\beta} & W(D^b_{\L}(\bA \cap \L)) \ar[d]^-{\wr}_-{\gamma} \\
& W^d(D^b_{\sbA \cap \L}(\A)) \ar[r]^-{\sim}_-{\delta} & W^d(D^b_{\L}(\A))\\
}
\]
\end{thm}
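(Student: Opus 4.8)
The plan is to assemble the square from four separate isomorphisms, three of which come from results already available in the literature together with the main equivalence Theorem \ref{thick-main}, and one of which ($\delta$) is essentially a shift-invariance statement for triangular Witt groups. First I would identify each arrow. The map $\alpha\colon W(\bA\cap\L)\xrightarrow{\sim} W(D^b_{\sbA\cap\L}(\bA\cap\L))$ is the d\'evissage-type isomorphism of \cite{MS} applied to the exact subcategory $\bA\cap\L$ with its duality $\ext^d_R(\_,C)$; here one must check that $\bA\cap\L$ is an exact subcategory of a triangulated category closed under the relevant duality and that the hypotheses of \cite{MS} are met, which follows from Remark \ref{2-3-rem} (the $2$-out-of-$3$ property) and the discussion preceding Theorem \ref{witt}. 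The map $\alpha'$ is induced by the inclusion of thick subcategories $D^b_{\sbA\cap\L}(\bA\cap\L)\hookrightarrow D^b_{\L}(\bA\cap\L)$; since $\bA\cap\L$ consists exactly of the finite length (more precisely $\L$-supported) modules of finite $\A$-dimension, a complex in $D^b(\bA\cap\L)$ has all homologies in $\bA\cap\L$ iff it has all homologies in $\L$, so these two subcategories coincide and $\alpha'$ is an isomorphism essentially by definition (this replaces the more delicate argument needed in \cite{MS}).

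Next I would produce the vertical maps. For $\beta\colon W(D^b_{\sbA\cap\L}(\bA\cap\L))\xrightarrow{\sim}W^d(D^b_{\sbA\cap\L}(\A))$ and $\gamma\colon W(D^b_{\L}(\bA\cap\L))\xrightarrow{\sim}W^d(D^b_{\L}(\A))$, the idea is to use the ``resolution functor'' $\iota\circ\iota'^{-1}$ from $D^b(\bA\cap\L)$ to $D^b_{\L}(\A)$ discussed just before Theorem \ref{witt}, which by the cited arguments of \cite{BW,Gi,MS} is a duality-preserving equivalence intertwining the duality $\ext^d_R(\_,C)$ on the source with $\hm_R(\_,C)$ on the target, up to the shift by $d$ recorded in Theorem \ref{witt}. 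Restricting this equivalence to the subcategories with homologies in $\L$ (respectively in $\bA\cap\L$) gives $\gamma$ (respectively $\beta$) as an isomorphism of triangular Witt groups, via \cite[Lemma 4.1(c)]{BW}. The remaining map $\delta\colon W^d(D^b_{\sbA\cap\L}(\A))\xrightarrow{\sim}W^d(D^b_{\L}(\A))$ is induced by the inclusion $D^b_{\sbA\cap\L}(\A)\hookrightarrow D^b_{\L}(\A)$; but by Theorem \ref{main} (equivalently Theorem \ref{thick-main} with $\T=\bA$), $D^b_{\L}(\A)\simeq D^b(\bA\cap\L)$, and under this equivalence $D^b_{\sbA\cap\L}(\A)$ corresponds to $D^b_{\sbA\cap\L}(\bA\cap\L)=D^b_{\L}(\bA\cap\L)$, so $\delta$ is again an equivalence, hence induces an isomorphism on $W^d$.

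Finally I would check commutativity of the square: all four maps are induced by exact, duality-compatible functors between the underlying triangulated categories with duality, and the square of underlying functors commutes up to natural isomorphism of duality-functors (indeed the horizontal and vertical inclusions and the resolution functor fit into a commuting square at the level of chain complexes), so the induced square of Witt groups commutes by functoriality of triangular Witt groups under duality-preserving exact functors. I expect the main obstacle to be purely bookkeeping rather than conceptual: namely keeping track of the shift by $d$ so that the degrees $W$ versus $W^d$ match up consistently around the square, and verifying that the identification $D^b_{\sbA\cap\L}(\A)\simeq D^b_{\L}(\bA\cap\L)$ (needed for $\delta$) is compatible with the dualities already fixed on these categories—i.e.\ that Theorem \ref{main}'s equivalence, which a priori only respects triangulated structure, does respect the dualities when restricted, which should follow from the same argument as for $\gamma$ but requires one to spell out that it sends $\sbA\cap\L$-supported complexes to $\sbA\cap\L$-supported complexes on both sides.
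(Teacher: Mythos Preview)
Your treatment of $\alpha$, $\beta$, and $\gamma$ follows the paper's approach. However, your argument for $\alpha'$ contains a genuine error: you assert that for a complex in $D^b(\bA\cap\L)$, having all homologies in $\L$ is equivalent to having all homologies in $\bA\cap\L$, so that $D^b_{\sbA\cap\L}(\bA\cap\L)$ and $D^b_{\L}(\bA\cap\L)$ coincide. This is false. Homologies of a complex with terms in $\bA\cap\L$ automatically lie in $\L$ (since $\L$ is Serre), so indeed $D^b_{\L}(\bA\cap\L)=D^b(\bA\cap\L)$; but they need not lie in $\bA$, because the $2$-out-of-$3$ property does not force the kernel, image, and cokernel of a single map between two objects of $\bA$ to remain in $\bA$. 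Concretely, take $R=k[[a,b,c]]/(ab)$ (Cohen--Macaulay of dimension $2$), $\A=\P(R)$, $\L=\Mfl$, and $M=R/(a+b,c)\in\bA\cap\L$. The two-term complex $M\xrightarrow{\cdot a}M$ lies in $\ch^b(\bA\cap\L)$, yet both of its homologies are isomorphic to the residue field $k$, which has infinite projective dimension since $R$ is not regular. Hence $D^b_{\sbA\cap\L}(\bA\cap\L)\subsetneq D^b_{\L}(\bA\cap\L)$, and $\alpha'$ is not an isomorphism ``essentially by definition''.

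Your argument for $\delta$ inherits the same error, since it again invokes the false identification $D^b_{\sbA\cap\L}(\bA\cap\L)=D^b_{\L}(\bA\cap\L)$. The paper proceeds differently: it obtains $\alpha'$ by generalizing the d\'evissage arguments of \cite{MS} (this is where the actual content for $\alpha'$ resides and cannot be shortcut), and then, rather than proving $\delta$ directly, deduces that $\delta$ is an isomorphism from the commutativity of the square together with the already-established isomorphisms $\alpha'$, $\beta$, $\gamma$. To repair your proposal, replace the ``coincide'' step for $\alpha'$ with an appeal to the \cite{MS} argument, and derive $\delta$ as a formal consequence of commutativity.
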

\begin{proof}
We only give a sketch of the proof. The isomorphism $\gamma$ already occurs in Theorem
\ref{witt} and $\beta$ is also a similar direct consequence of Theorem \ref{main} by
restricting the equivalence of categories  to the categories with support $\bA \cap \L$
(or by following the arguments in \cite{MS}). The arguments in \cite{MS} generalize
directly to give the isomorphisms $\alpha$ and $\alpha'$. The commutativity of the
diagram shows that $\delta$ is also an isomorphism.
\end{proof}

\begin{rem}
The remark \ref{spectral-kth-rmk} and succeeding theorem \ref{spectral-kth-thm}
works (as in \cite[Remark 3]{B3}) for any cohomology theory which induces long
exact sequences on short exact sequences of triangulated categories. In particular,
for Witt theory, we would get spectral sequences and augmented weak Gersten-Witt complexes
for triangular Witt groups tensored with $\Z[\frac{1}{2}]$ as in Theorem \ref{spectral-kth-thm} .
\end{rem}

\section{Examples and Questions}\label{sec6}
An advantage of working with arbitrary Serre subcategories is the ability to deal with supports. We are able to deal with supports only when the Serre subcategory satisfies the condition (*). In contrast, in the coherent picture (i.e. $G$ theory or coherent Witt groups), theorems similar to the ones in section 5 exist with arbitrary supports, i.e. supports in any specialization closed
subset, in particular over any closed subset $V$ of $\spec(R)$. This is one reason why
smoothness has played a crucial role in results for $\K$-theory or other similar theories, since both coherent and usual theories coincide. Thus the following question is natural :
\begin{quest}
Is $D^b(\T \cap \L) \rightarrow D^b_{\L}(\T)$ an equivalence for any thick
subcategory $\T$ of $\M(R)$ and any Serre subcategory $\L$ of $\M(R)$?
\end{quest}
As we noted in Section \ref{sec1}, when $R$ is local and not Cohen-Macaulay, this is always false
with $\L = \Mfl$ and $\T = \xbar{\P(R)}$. However, it is still plausible that the result holds for a
more general class of Serre subcategories.

Next, we consider a question about quotients. Supports allow one to write localization
exact sequences. Comparing supports with quotients is a powerful tool, for example it is known that
$\displaystyle{\K_i\left(\M\left(\frac{R}{(a)}\right)\right) \simeq \K_i(\{ \text{modules supported on V(a)} \})}$
where $a$ is a nonzero divisor. This is because one can either apply d\'{e}vissage directly,
or, for other similar theories, the spectral sequence and d\'{e}vissage reduces one to the
case of residue fields of points and both sides have the same residue fields. This leads to the following question :
\begin{quest}
Let $\L$ be the Serre subcategory of modules supported on $V(I)$ where $I$ is a set
theoretic complete intersection ideal. Let $\T$ be a thick subcategory in $\M(R)$.
Is $\K_i\left(D^b\left(\T \cap \M\left(\frac{R}{I}\right)\right)\right) \rightarrow \K_i\left(D^b_{\L}(\T)\right)$ an isomorphism?
\end{quest}
We present a rather simple example which answers this question negatively.
\begin{exem}
Let $\displaystyle{R = \frac{k[X]}{(X^2)} }$. Let $I = (X)$, so $V(I) = \spec(R)$.
Note that $V(I) = V(\emptyset)$ and so $V(I)$ is a set theoretic complete intersection.
Let $\T = \xbar{\P(R)}$. Then $\T \cap \M(R/I) = \{ 0 \}$ while 
$D^b_{\L}(\T) = D^b(\xbar{\P(R)}) \neq \{ 0 \}$.
Then $K_0(\T \cap \M(R/I)) = 0$ and $K_0(D^b_{\L}(\T)) = \Z$.
\end{exem}

Clearly going modulo any ideal $I$ does not work. We specialize to the case when
$I = (a)$ where $a$ is a nonzero divisor.
\begin{quest}\label{open}
Let $a$ be a nonzero divisor. Let $\L$ be the Serre subcategory of modules supported
on $V(a)$. Is $\K_i \left(\bA \cap \M\left(\frac{R}{(a)}\right)\right) \xrightarrow{} \K_i(\bA \cap \L)$ an
isomorphism?
\end{quest}
One natural way to answer this question would involve the following two steps :
\begin{enumerate}
\item Reprove Quillen's d\'{e}vissage theorem for full subcategories of $\M(R)$ satisfying the 2-out-of-3 property
(it is known for abelian subcategories).
\item Find a natural filtration for a module $M \in \bA \cap \L$ so that the quotients belong to $\bA \cap \M\left(\frac{R}{(a)}\right)$.
\end{enumerate}
For every module $M$ on the right there exists an $n$
such that $a^nM = 0$ and thus a natural filtration : 
$M \supseteq aM \ldots \supset a^{n-1}M \supset 0$
and $\displaystyle{\frac{a^iM}{a^{i+1}M}}$ is in $\M(\frac{R}{(a)})$. At first sight, this might seem like an answer to the second part. However, it turns out that even though $M \in \A$, its quotient $\displaystyle{\frac{M}{aM}}$ need
not be. Hailong Dao provided us with the following example in the best possible case of a polynomial variable.

\begin{exem}
Let $\displaystyle{R = \frac{k[[X,Y]]}{(XY)}[Z]}$. Let 
$\displaystyle{M = \frac{R}{(X-Z, Y -Z)}}$. Then $M$ has finite projective dimension
over $R$, $Z^2 M = 0$ but $\displaystyle{\frac{M}{(Z)M} = k}$ does not have finite
projective dimension over $\displaystyle{\frac{k[[X,Y]]}{(XY)}}$.
\end{exem}
Since the module $M$ has length $2$, there is no other option of a filtration. Thus, the most
natural naive arguments provide no answer. If Question \ref{open} has a positive
answer, it would yield nice long exact sequences and be useful in computations.

Finally, let $R$ be equicodimensional and Cohen-Macaulay of dimension $d$, $\A \subseteq \G_C$ a thick subcategory closed under 
the duality, and $\L$ the Serre subcategory of finite length modules supported on a set 
theoretic complete intersection $V$. Since we have an equivalence of triangular Witt groups 
and triangular Grothendieck-Witt groups (Theorem \ref{witt}, Remark \ref{gw}) and both are obtained from a  Grothendieck-Witt spectrum, it begs the natural question :
\begin{quest}
Are the Grothendieck-Witt spectra of $D^b(\bA \cap \L)$ and $D^b_{\L}(\A)$ homotopy equivalent?
\end{quest}
If a suitable intermediate category can be found which has duality, then we can answer this
in the affirmative. However, there is in general no duality on $\bA$. Also, we do not know
if the category of double complexes constructed in the proof of \cite[Lemma 6.4]{BW} arises
as the homotopy category of some suitable pretriangulated category.

\end{document}